\declaretheoremstyle[headformat=swapnumber, spaceabove=\paraskip,
bodyfont=\itshape]{mystyle}
\declaretheoremstyle[headformat=swapnumber, spaceabove=\paraskip,
bodyfont=\normalfont]{mystyle-plain}
\declaretheorem[name=Definition, sibling=para, style=mystyle]{Definition}
\declaretheoremstyle[numbered=no, spaceabove=\paraskip,
bodyfont=\itshape]{mystyle-empty}
\declaretheoremstyle[numbered=no, spaceabove=\paraskip,
bodyfont=\itshape]{mystyle-empty-plain}
\declaretheorem[name=Lemma, style=mystyle-empty]{Lemma*}
\declaretheorem[name=Proposition, style=mystyle-empty]{Proposition*}
\declaretheorem[name=Theorem, style=mystyle-empty]{Theorem*}
\declaretheorem[name=Corollary, style=mystyle-empty]{Corollary*}
\declaretheorem[name=Definition, style=mystyle-empty]{Definition*}
\declaretheorem[name=Remark, style=mystyle-empty]{Remark*}
\declaretheorem[name=Example, style=mystyle-empty]{Example*}
\renewenvironment{proof}[1][\textit{Proof}]{\par
  \pushQED{\qed}%
  \normalfont \topsep.75\paraskip\relax
  \trivlist
  \item[\hskip\labelsep
        \itshape
    #1\@addpunct{.}]\ignorespaces
}{%
  \popQED\endtrivlist\@endpefalse
}
\newcommand\NN{\mathbb N}
\newcommand\CC{\mathbb C}
\newcommand\QQ{\mathbb Q}
\newcommand\RR{\mathbb R}
\newcommand\ZZ{\mathbb Z}
\newcommand\ot{\otimes}
\renewcommand\to{\longrightarrow}
\renewcommand\phi{\varphi}
\newcommand\qbinom[2]{\genfrac{[}{]}{0pt}{0}{#1}{#2}}
\newcommand\vspan[1]{\left\langle #1 \right\rangle}
\newcommand\A{\mathcal A}
\newcommand\B{\mathcal B}
\renewcommand\O{\mathcal O}
\newcommand\F{\mathcal F}
\newcommand\GG{\Gamma}
\renewcommand\L{\mathcal L}
\newcommand\tphi{\tilde \phi}
\renewcommand\b{\mathbf b}
\renewcommand\k{\Bbbk}
\renewcommand\q{\mathbf{q}}
\newcommand\m{\mathfrak m}
\newcommand\n{\mathfrak n}
\newcommand\g{\mathfrak g}
\newcommand\opp{\circ}
\newcommand\join{\vee}
\newcommand\meet{\wedge}
\newcommand\lex{\mathsf{lex}}
\newcommand\str{\mathsf{str}}
\newcommand\flagbasis{FB}
\newcommand\schubertbasis{SB}
\DeclareMathOperator\Mod{\mathsf{Mod}}
\DeclareMathOperator\mmod{\mathsf{mod}}
\DeclareMathOperator\Hom{\mathsf{Hom}}
\DeclareMathOperator\Ext{\mathsf{Ext}}
\DeclareMathOperator\GrHom{\underline{\mathsf{Hom}}}
\DeclareMathOperator\GrExt{\underline{\mathsf{Ext}}}
\DeclareMathOperator\gr{\mathsf{gr}}
\DeclareMathOperator\im{Im}
\DeclareMathOperator\id{injdim}
\DeclareMathOperator\pd{pdim}
\DeclareMathOperator\ldim{ldim}
\DeclareMathOperator\depth{depth}
\DeclareMathOperator\lcd{lcd}
\DeclareMathOperator\Spec{Spec}
\DeclareMathOperator\Id{Id}
\DeclareMathOperator\rank{rk}
\DeclareMathOperator\rk{rk}
\DeclareMathOperator\relint{\mathsf{relint}}
\DeclareMathOperator\wt{\mathsf{wt}}
\title{
Quantum toric degeneration of quantum flag and Schubert varieties
}
\date{}
\author{L. Rigal, P. Zadunaisky}
\begin{document}
\maketitle

\begin{abstract}
We show that certain homological regularity properties of graded connected 
algebras, such as being AS-Gorenstein or AS-Cohen-Macaulay, can be tested by 
passing to associated graded rings. In the spirit of noncommutative algebraic 
geometry, this can be seen as an analogue of the classical result that, in a 
flat family of varieties over the affine line, regularity properties of 
the exceptional fiber extend to all fibers. 
We then show that quantized coordinate rings of flag varieties and Schubert 
varieties can be filtered so that the associated graded rings are twisted 
semigroup rings in the sense of \cite{RZ}. This is a noncommutative version
of the result due to Caldero \cite{C} stating that flag and Schubert varieties 
degenerate into toric varieties, and implies that quantized coordinate rings 
of flag and Schubert varieties are AS-Cohen-Macaulay.
\end{abstract}

\section{Introduction}
Let $\k$ be a field, and let $A$ be a noetherian commutative algebra over 
$\k$. If we put
an ascending filtration on $A$ then we can build the Rees ring of the 
filtration, which is a free $\k[t]$-algebra $\mathcal R$ such that $A \cong 
\mathcal R / (t - \lambda)\mathcal R$ for all $\lambda \in \k^\times$, while
$\mathcal R / t \mathcal R$ is isomorphic to the associated graded ring. In 
geometric terms, if $\k$ is algebraically closed then the variety associated 
to $\mathcal R$ is a flat family over the affine line, whose generic fiber is 
isomorphic to $\Spec A$ and whose fiber over $0$ is isomorphic to 
$\Spec~\gr~A$; in this context the fiber over $0$ is called a degeneration of 
$\Spec A$. A standard result from algebraic geometry states that if the 
fiber over $0$ is regular (resp. Gorenstein, Cohen-Macaulay, or any 
other of a long list of properties) then \emph{all} fibers are regular 
(resp. Gorenstein, Cohen-Macaulay, etc.)

Of course, the idea of studying a ring by imposing a filtration and passing to 
the associated graded ring is a basic tool in an algebraist's toolbox, and 
can be applied outside of a geometric context. In particular the hypothesis of 
commutativity is not necessary for filtered-to-graded methods to work. 
However, in the spirit of noncommutative algebraic geometry, we should look at 
the case where $A$ is noetherian, $\NN$-graded and connected (i.e. $A_0 = \k$) 
with an eye on the geometric case. Although in this case there are no 
varieties associated to our algebras as in the commutative setting, we have 
suitable analogues of the notions of being 
regular, or Gorenstein, or Cohen-Macaulay, defined in purely homological terms 
(see paragraph \ref{AS-reg}). Hence it makes sense to ask whether these 
properties are ``stable by flat deformation'', i.e. if the fact that $\gr A$ 
has any of these properties implies that $A$ also has that property. The 
objective of this paper is to develop these ideas in order to study natural 
classes of noncommutative algebraic varieties, and its first main result 
\ref{transfer-of-regularity} shows that indeed, good geometric properties are
stable by degeneration in this context.


In the commutative setting, the usual flag variety and its Schubert 
subvarieties are examples where the degeneration method is successful. 
The same holds for the more general flag and Schubert varieties that one
may associate to a semisimple Lie group. On the other hand, the theory of 
quantum groups provides natural quantum analogues of flag and Schubert 
varieties, whose classical counterparts can be recovered as semiclassical 
limits when the deformation parameter tends to $1$. This is the class of 
noncommutative varieties we intend to study.

The degeneration approach to the study of flag and Schubert varieties was 
pioneered by 
de Concini, Eisenbud and Procesi \cite{DCEP82}, and followed by Gonciulea and
Lakshmibai \cite{GL}, and others. Caldero was the first to prove that any
Schubert variety of an arbitrary flag variety degenerates to an \emph{affine 
toric} variety in \cite{C}. A degeneration to a toric variety is particularly 
convenient, since the geometric properties of toric varieties are easily 
tractable, being encoded in the combinatorial properties of a semigroup 
naturally attached to them in the affine case. For a general survey on the 
subject of toric degenerations, including recent results, we refer the reader 
to \cite{FFL}. 

It is striking that even though he is interested in the classical objects, 
in \cite{C} Caldero uses the theory of quantum groups (more precisely the 
global bases of Luzstig and Kashiwara) to produce bases of the coordinate 
rings of the classical objects. His main argument then relies on Littelmann's 
string parametrization of the crystal basis of the negative part of the 
quantized enveloping algebra, which allows him to show that his bases have 
good multiplicative properties. It follows that one can build out of them
adequate filtrations of the coordinate rings, leading to a toric degeneration.

In contrast, we wish to produce degenerations of quantum Schubert varieties
at the noncommutative level. This led us to introduce noncommutative analogues
of affine toric varieties as a suitable target for degeneration and to study 
their geometric properties in \cite{RZ2}, where we show that just as in the 
classical case, these properties are encoded in the associated semigroup. 
Inspired by Caldero's work, in this article we show that quantum Schubert 
varieties degenerate into quantum toric varieties and explore the consequences 
of this fact. 
\\

We now discuss the contents of each section.

Section \ref{graded-connected-algebras} begins with a recollection of general 
material on connected $\NN^r$-graded algebras, their homological regularity 
properties, and the behavior of these properties with respect to a change of 
grading. In the last subsection we prove our main transfer result: if a 
connected $\NN^r$-graded algebra has a filtration by finite dimensional graded 
subspaces, then (under a mild technical condition known as property $\chi$) 
the original algebra inherits the regularity properties of the associated 
graded algebra, see Theorem \ref{transfer}. The key point is the existence of 
a spectral sequence relating the $\Ext$ spaces between an algebra and
its associated graded algebra; property $\chi$ guarantees that these spaces
are finite dimensional, which is crucial for the transfer of properties.

The aim of section \ref{qatv} is to introduce a general framework allowing to 
establish the existence of a noncommutative toric degeneration. We first
review the basic theory of affine semigroups and 
the regularity properties of quantum affine toric varieties from \cite{RZ2}. 
We then introduce the notion of an algebra of $(S,\phi)$-type, where $S$
is an affine semigroup $S \subset \NN^r$ and $\phi: S \to \NN$ is a semigroup 
morphism. Any positive affine semigroup has a unique minimal 
presentation as a quotient of a finitely generated free noncommutative monoid. 
An algebra of $(S,\phi)$-type is an algebra with a presentation modeled after
the presentation of $S$, with as many generators as $S$ and whose product
reflects that of $S$ up to terms of smaller order with respect to
the order induced on $S$ by $\phi$. 
We then show that an algebra of $(S,\phi)$-type degenerates to a quantum 
toric variety if and only if it has a basis of monomials indexed by $S$. 
At this stage it is useful to recall the notion of an Algebra with a 
Straightening Law (see \cite{DCEP82} in the commutative case and \cite{RZ}
in the quantum case). It turns out that, under a mild assumption, this 
structure provides a natural toric degeneration.

Schubert varieties in the grassmannian enjoy such an ASL structure, both in he 
classical and quantum case. However, in more general Schubert varieties such a 
structure no longer exists. The notion of an algebra of $(S,\phi)$-type is a 
generalization of the ASL structure which does apply in this more general 
context, and the aforementioned basis indexed by $S$ plays the role of the
standard monomial basis. For more details regarding these two
notions see \ref{s-phi-type}. 
We finish this section by introducing the notion of an $(S,<)$-basis, where
$S$ is a semigroup and $<$ is a total order on $S$. It turns out that the 
existence of such a structure associated to an affine semigroup $S$, with the 
order obtained by pulling back the lexicographic order of $\NN^t$ through an 
embedding $S \hookrightarrow \NN^t$, is enough to obtain an $(S,\phi)$-type
structure and to prove the existence of a toric degeneration.

Section \ref{q-flag-schubert} contains a review of the definitions of 
quantum flag varieties and their Schubert subvarieties. We then introduce bases
for these objects originally defined by Caldero, who showed that they are
parametrized by affine semigroups using a variant of Littelmann's string 
parametrization, and that they have the multiplicative properties needed to
produce a toric degeneration. More precisely, we show that these bases are
$(S,<)$-bases for an adequate semigroup $S$. We stress that, since Caldero was 
interested 
only in classical varieties, he worked over the field $\CC(q)$, where $q$ is a 
transcendental variable, and with an enlarged version of the quantum group
$U_q(\g)$ (which Janzen calls the adjoint type). Since we are interested in 
the quantum setting, we work over the ring $\ZZ[v,v^{-1}]$ and eventually 
specialize to an arbitrary field containing a deformation parameter $q$ which 
is not a root of unity. Furthermore we use the small (simply connected) 
version of $U_q(\g)$. 

While technical, we felt that these differences merited a detailed exposition 
of the arguments, which we present in paragraphs \ref{T:A-crystal-basis} to 
\ref{Schubert-deg}. Our proof that Caldero's bases have the desired 
multiplicative properties is different that the one found in \cite{C}. While 
he uses multiplicative properties of the dual global basis, we analize the
coproduct of $U^-_q(\g)$ in terms of the global basis. To do this we are
led to study a second basis introduced by 
Littelman in \cite{Lit}, consisting of certain monomials of divided 
differences. This monomial basis seems to be better adapted for
computations, and has been used to obtain more general degeneration results
in the classical case by Fang, Fourier and Littelmann \cite{FFL2}. However, 
the global basis, or rather its string parametrization, is crucial in 
order to prove that one obtains a \emph{toric} degeneration.

\noindent\textbf{Acknowledgements:} The second author would like to thank
Xin Fang for an illuminating dicussion on the relation between monomial and
canonical bases, and Köln University for its hospitality.

\section{Degeneration of graded connected algebras}
\label{graded-connected-algebras}
Let $r$ be a positive integer. Throughout this section $A$ denotes a 
noetherian connected $\NN^{r}$-graded $\k$-algebra. Here connected means that 
the homogeneous component of $A$ of degree $(0, \ldots, 0)$ is isomorphic to 
$\k$ as a ring, so the ideal generated by all homogeneous elements of non-zero 
degree is the unique maximal graded ideal of $A$; we denote this ideal by 
$\m$. Clearly $A/\m \cong \k$ as vector spaces, and whenever we consider $\k$ 
as $A$-bimodule, it will be with this structure.

\subsection*{Graded modules}

\paragraph
\label{grmod-generalities}
We denote by $\ZZ^{r}\Mod A$ the category of $\ZZ^{r}$-graded $A$-modules with 
homogeneous morphisms of degree $0$. We review some general properties of this 
category; the reader is referred to \cite{NV}*{chapter 2} for proofs and 
details.

The category $\ZZ^r\Mod A$ has enough projectives and injectives, so we may 
speak of the graded projective and injective dimensions of an object $M$, 
which we denote by $\pd_A^{\ZZ^r} M$ and  $\id^{\ZZ^r}_A M$, respectively. We 
denote by $\ZZ^{r}\mmod A$ the subcategory of finitely generated 
$\ZZ^{r}$-graded $A$-modules. Since $A$ is noetherian $\ZZ^r \mmod A$ is an 
abelian category with enough projectives. 

For every object $M$ of $\ZZ^{r}\Mod A$ and every $\xi \in \ZZ^{r}$ we denote 
by $M_\xi$ the homogeneous component of $M$ of degree $\xi$. Also, we denote 
by $M[\xi]$ the object of $\ZZ^{r}\Mod A$ with the same underlying $A$-module 
as $M$ and with homogeneous components $M[\xi]_{\zeta} = M_{\zeta + \xi}$ for 
all $\zeta \in \ZZ^{r}$. If $f: N \to M$ is a morphism in $\ZZ^{r}\Mod A$ then 
the same function defines a morphism $f[\xi]:N[\xi] \to M[\xi]$. In this way 
we get an endofunctor $[\xi]: \ZZ^{r}\Mod A \to \ZZ^{r}\Mod A$, called the 
$\xi$-suspension functor; it is an autoequivalence, with inverse $[-\xi]$.

Given $N, M$ objects of $\ZZ^{r}\Mod A$ we set 
\begin{align*} 
  \GrHom_A(N, M) = \bigoplus_{\xi \in \ZZ^r} \Hom_{\ZZ^r\Mod A}(N,M[\xi]).  
\end{align*} 
This is a $\ZZ^{r}$-graded vector space, with its component of degree $\xi \in 
\ZZ^{r}$ equal to the space of homogeneous $A$-linear maps of degree $\xi$ 
from $N$ to $M$. For every $i \geq 0$ we denote by $\GrExt_A^i$ the $i$-th 
right derived functor of $\GrHom_A$. We point out that 
\[
  \GrHom_A(N[\xi], M) \cong \GrHom_A(N,M[-\xi]) \cong \GrHom_A(N,M)[-\xi],
\]
as $\ZZ^r$-graded vector spaces, and that these isomorphisms induce analogous 
ones for the corresponding right derived functors.

\paragraph
\label{ext-change-of-grading}
Given $\xi = (\xi_1, \ldots, \xi_r) \in \ZZ^r$ we set $|\xi| = \xi_1 + \cdots +
\xi_r$. Given a $\ZZ^{r}$-graded vector space $V$, we denote by $|V|$ the 
$\ZZ$-graded vector space whose $n$-th homogeneous component is 
\begin{align*}
  |V|_n = \bigoplus_{|\xi| = n} V_\xi.
\end{align*}
In particular $|A|$ is a connected $\NN$-graded algebra. Also, if $M$ is a 
$\ZZ^r$-graded $A$-module then $|M|$ is a $\ZZ$-graded $|A|$-module, and this 
assignation is functorial. Since $A$ is noetherian, \cite{RZ2}*{Proposition 
1.3.7} implies that for every $i \geq 0$ and any pair of $\ZZ^r$-graded 
modules $N, M$, with $N$ finitely generated, there is an isomorphism of 
$\ZZ$-graded modules
\[
  |\GrExt^i_A(N,M)| \cong \GrExt^i_{|A|}(|N|,|M|),
\]
natural in both variables. 

\subsection*{Homological regularity properties}
In this subsection we discuss some homological properties a connected 
$\NN^r$-graded algebra may posses. Most of the material found in this section 
is standard for connected $\NN$-graded algebras.

\paragraph
\label{chi}
Let $M$ be a $\ZZ^r$-graded $A$-module. We say that $\chi(M)$ holds if for 
each $i \geq 0$ the graded vector space $\GrExt_A^i(\k, M)$ is finite 
dimensional, and say that the algebra $A$ has property $\chi$ if $\chi(M)$ 
holds for every finitely generated $\ZZ^r$-graded $A$-module $M$. Property 
$\chi$ was originally introduced in \cite{AZ}*{section 3} and plays a 
fundamental role in noncommutative algebraic geometry.

\paragraph
\label{torsion-functor}
Associated to $A$ and $\m$ there is a \emph{torsion functor} 
\begin{align*} 
  \GG_\m: \ZZ^{r}\Mod A &\to \ZZ^{r}\Mod A \\ 
  M &\longmapsto \{x \in M \mid \m^n x = 0 \mbox{ for } n \gg 0\}, 
\end{align*} 
which acts on morphisms by restriction and correstriction. The torsion functor 
is left exact, and for each $i \geq 0$ its $i$-th right derived functor is 
denoted by $H_\m^i$ and called the \emph{$i$-th local cohomology functor} of 
$A$. 

There exists a natural isomorphism
\begin{align*} 
\GG_\m \cong \varinjlim_n \GrHom_A(A/\m^n, -) 
\end{align*}
which by standard homological algebra extends to natural isomorphisms
\begin{align*} 
H^i_\m \cong \varinjlim_n \GrExt^i_A(A/\m^n, -) 
\end{align*} 
for all $i \geq 1$. The proof of this fact is completely analogous to the 
one found in \cite{BS}*{Theorem 1.3.8} for commutative ungraded algebras.

We denote by $A^\opp$ the opposite algebra of $A$, which is also
a connected $\NN^r$-graded algebra, and by $\m^\opp$ its maximal graded ideal. 
We write $\GG_{\m^\opp}$ and $H^i_{\m^\opp}$ for the corresponding torsion and 
local cohomology functors, respectively.

\paragraph
\label{depth-and-ldim}
Given an object $M$ of $\ZZ^{r}\Mod A$, its \emph{depth} and \emph{local 
dimension} are defined as
\begin{align*} 
  \depth_\m M &= \inf \{i \in \NN \mid \GrExt^i_A(\k, M) \neq 0\}, \\ 
  \ldim_\m M &= \sup \{i \in \NN \mid H^i_\m(M) \neq 0\},
\end{align*} 
respectively. The \emph{local cohomological dimension} of $A$, denoted by 
$\lcd_\m A$, is the supremum of the $\ldim_\m M$ with $M$ finitely generated.

\paragraph
\label{AS-reg}
The following definition is taken from \cite{RZ2}*{Definition 2.1.1}. It is an 
$\NN^r$-graded analogue of the definition of the AS-Cohen-Macaulay, 
AS-Gorenstein and AS-regular properties for connected $\NN$-graded algebras 
found in the literature, see for example the introduction to \cite{JZ}.
\begin{Definition*}
Let $A$ be a connected noetherian $\NN^{r}$-graded algebra.  
\begin{enumerate} 
\item $A$ is called \emph{AS-Cohen-Macaulay} if there exists $n \in \NN$ such 
that $H^i_{\m}(A) = 0$ and $H_{\m^\opp}^i(A) = 0$ for all $i \neq n$.

\item $A$ is called \emph{left AS-Gorenstein} if it has finite graded 
injective dimension $n$ and there exists $\ell \in \ZZ^{r}$, called the 
\emph{Gorenstein shift} of $A$, such that
\[ 
\GrExt_{A}^i(\k,A) \cong 
  \begin{cases} 
    \k[\ell] & \mbox{for } i = n, 
    \\ 0 & \mbox{for } i \neq n,
  \end{cases} 
\] 
as $\ZZ^{r}$-graded $A^\opp$-modules. We say $A$ is \emph{right} AS-Gorenstein 
if $A^\opp$ is left AS-Gorenstein. Finally $A$ is \emph{AS-Gorenstein} if $A$ 
and $A^\opp$ are left AS-Gorenstein, with the same injective dimensions and 
Gorenstein shifts.

\item $A$ is called \emph{AS-regular} if it is AS-Gorenstein, and its left and 
right graded global dimensions are finite and equal.
\end{enumerate} 
\end{Definition*} 

\paragraph
\label{invariance-by-grading}
The properties discussed in paragraphs \ref{chi} to \ref{AS-reg} are defined 
in terms of the category of $\ZZ^r$-graded $A$-modules. In 
\ref{ext-change-of-grading} we defined the algebra $|A|$, which is equal to $A$
as algebra but is endowed with a connected $\NN$-grading induced by the 
grading on $A$ and the group morphism $|\cdot|$. The maximal graded ideals of 
$A$ and $|A|$ coincide as vector spaces, so we may ask whether the fact that 
$A$ has property $\chi$, or finite local dimension, or the AS-Cohen-Macaulay 
property, etc., implies that $|A|$ has the corresponding property.

Let us say that a property $P$ does not depend on the grading of $A$ if the 
following holds: for every connected $\NN^t$-graded algebra $B$ with maximal 
ideal $\n$, which is isomorphic to $A$ as algebra through an isomorphism that 
sends $\n$ to $\m$, $A$ has property $P$ if and only if $B$ has property $P$. 
As shown in \cite{RZ2}*{Corollary 1.3.9}, the local dimension of $A$ does not 
depend on the grading of $A$. An analogous result is proved in 
\cite{RZ2}*{Remark 2.1.7} for the properties defined in \ref{AS-reg}. However, 
the situation is more delicate for property $\chi$. The following lemma shows 
that property $\chi$ is independent of the grading of $A$ under the hypothesis 
that $A$ has finite local dimension; we do not know whether this hypothesis 
can be eliminated.

\begin{Lemma*}
Suppose $\lcd_\m A < \infty$. Then the algebra $A$ has property $\chi$ if and 
only if $\chi(A)$ holds.
\end{Lemma*}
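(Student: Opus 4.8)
One implication is trivial: if $A$ has property $\chi$ then $\chi(M)$ holds for every finitely generated $\ZZ^{r}$-graded module, in particular for $M=A$. So assume conversely that $\chi(A)$ holds and that $d:=\lcd_\m A<\infty$; we must deduce that $\chi(M)$ holds for every finitely generated $\ZZ^{r}$-graded $A$-module $M$. The plan is first to reduce to the case $r=1$, and then to control $\GrExt^\bullet_A(\k,M)$ through the local cohomology of $M$. For the reduction, \ref{ext-change-of-grading} gives $|\GrExt^i_A(N,M)|\cong\GrExt^i_{|A|}(|N|,|M|)$ for $N$ finitely generated; since $V\mapsto|V|$ preserves total dimension and $|\k|=\k$, the condition $\chi(M)$ for the $A$-module $M$ is equivalent to $\chi(|M|)$ for the $|A|$-module $|M|$, and in particular $\chi(A)$ is equivalent to $\chi(|A|)$. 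As $\lcd_\m A$ does not depend on the grading by \cite{RZ2}, the algebra $|A|$ is connected $\NN$-graded noetherian, satisfies $\lcd_\m|A|<\infty$ and $\chi(|A|)$, and every finitely generated $\ZZ^{r}$-graded $A$-module $M$ yields a finitely generated $\ZZ$-graded $|A|$-module $|M|$; hence it suffices to prove the statement for $|A|$, and we may assume $r=1$.

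Now fix a finitely generated graded $A$-module $M$. Since $A$ is noetherian and connected it is locally finite, hence so is $M$, and resolving $\k$ by finitely generated graded free modules shows that $\GrExt^i_A(\k,N)$ is locally finite whenever $N$ is; so it is enough to bound the degrees in which each $\GrExt^i_A(\k,M)$ is non-zero. Because $\k$ is $\m$-torsion, $\GrHom_A(\k,-)$ factors through $\GG_\m$, and since $\GG_\m$ preserves injective objects there is a spectral sequence
\begin{equation*}
  E_2^{p,q}=\GrExt^p_A\big(\k,H^q_\m(M)\big)\;\Longrightarrow\;\GrExt^{p+q}_A(\k,M),
\end{equation*}
convergent because $H^q_\m(M)=0$ for $q>d$. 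So it suffices to show each $\GrExt^p_A(\k,H^q_\m(M))$ is finite dimensional, and I would obtain this by proving that every module $N=H^q_\m(M)$ is \emph{cofinite}, meaning that its graded dual $N^\vee=\GrHom_\k(N,\k)$ is finitely generated over $A^\opp$ (which forces $N$ to be locally finite). Granting cofiniteness, $\k$-dualizing the complex $\GrHom_A(F_\bullet,N)$ attached to a finitely generated graded free resolution $F_\bullet\to\k$ produces natural isomorphisms $\GrExt^p_A(\k,N)\cong\operatorname{Tor}^A_p(N^\vee,\k)^\vee$; since $N^\vee$ is finitely generated over the noetherian connected graded algebra $A^\opp$, a minimal graded free resolution of $N^\vee$ shows $\operatorname{Tor}^A_p(N^\vee,\k)$ is finite dimensional, and substituting back into the spectral sequence finishes the proof.

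The heart of the matter is thus the cofiniteness of $H^q_\m(M)$ for all finitely generated $M$, and this is where both hypotheses are used in an essential way. The hypothesis $d<\infty$ makes $R\GG_\m(A)$ a bounded complex, and via the duality of the previous paragraph the hypothesis $\chi(A)$ amounts to saying that $\omega^\bullet:=R\GG_\m(A)^\vee$ is a bounded complex with finitely generated cohomology over $A^\opp$. The plan is then to show — or to extract from the theory of property $\chi$ and dualizing complexes, cf.\ \cite{AZ} — that under these two conditions $\omega^\bullet$ is a balanced dualizing complex for $A$; then local duality expresses $H^q_\m(M)^\vee$, up to reindexing, as a cohomology module of $R\GrHom_A(M,\omega^\bullet)$, which is finitely generated over $A^\opp$ for every finitely generated $M$ because $A$ is noetherian. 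A more hands-on route avoids dualizing complexes: prove cofiniteness of $H^q_\m(M)$ for all finitely generated $M$ directly, by d\'evissage on $M$ and induction on $d$, using the truncation triangles of $R\GG_\m$ and the long exact sequences of local cohomology to bootstrap from the case $M=A$, which is exactly $\chi(A)$.

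Whichever route is taken, the main obstacle is this bootstrap step. With only $\chi(A)$ available — rather than full property $\chi$, which would make the cofiniteness claim tautological — one must genuinely exploit the finiteness of $\lcd_\m A$ to propagate the finiteness statement from $A$ to an arbitrary finitely generated module; this is also why the hypothesis $\lcd_\m A<\infty$ resists removal.
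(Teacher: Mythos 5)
Your easy implication and your reduction to the connected $\NN$-graded case through the functor $|\cdot|$ are exactly the paper's first step (the paper likewise uses \cite{RZ2}*{Proposition 1.3.7} and the grading-independence of local dimension). The trouble is what comes after the reduction: the whole nontrivial content of the lemma is the claim that a connected $\NN$-graded noetherian algebra with finite local dimension for which $\chi$ holds for the module $A$ itself has property $\chi$ for all finitely generated graded modules. The paper disposes of this by citing a previously established result, \cite{RZ2}*{Proposition 2.2.6} (whose hypothesis $\ldim_\m A<\infty$ follows from $\lcd_\m A<\infty$), and then transfers $\chi$ back through $|\cdot|$. You instead set out to reprove it, but you never do: the spectral sequence $\GrExt^p_A(\k,H^q_\m(M))\Rightarrow \GrExt^{p+q}_A(\k,M)$ and the duality $\GrExt^p_A(\k,N)\cong \mathrm{Tor}^A_p(N^\vee,\k)^\vee$ for cofinite $N$ are fine, but they only reduce the lemma to the cofiniteness of $H^q_\m(M)$ for every finitely generated $M$, and at that point you offer two unexecuted ``plans'' and explicitly acknowledge that the bootstrap from $\chi(A)$ to arbitrary $M$ is an unresolved obstacle. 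That bootstrap \emph{is} the lemma, so this is a genuine gap, not a proof.

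Moreover, the two routes you sketch would not obviously close it. The balanced dualizing complex criterion you allude to requires $\chi$ and finite local cohomological dimension for both $A$ and $A^\opp$, whereas the hypotheses of the lemma are one-sided; and your assertion that $\chi(A)$ ``amounts to'' $R\GG_\m(A)^\vee$ being a bounded complex with finitely generated cohomology over $A^\opp$ does not follow from the duality you set up, which converts cofiniteness of $N$ into finite-dimensionality of $\GrExt^\bullet_A(\k,N)$ and not conversely; deducing cofiniteness of $H^i_\m(A)$ from finite-dimensionality of $\GrExt^i_A(\k,A)$ is essentially of the same depth as the statement being proved. The d\'evissage route has the same base-case problem and is only named, not carried out. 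If you do not intend to reprove \cite{RZ2}*{Proposition 2.2.6} in full, the correct move is the paper's: after the change-of-grading reduction, invoke that proposition and conclude.
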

\begin{proof}
If $A$ has property $\chi$ then clearly $\chi(A)$ holds. To prove the opposite
implication, assume $\chi(A)$ holds. Recall from \ref{ext-change-of-grading} 
that for every $\ZZ^r$-graded $A$-module $M$ and for every $i \geq 0$ there 
exists a graded vector space isomorphism
\[
  |\GrExt^i_A(\k, M)|
    \cong \GrExt_{|A|}^i(|\k|, |M|),
\]
so $\chi(M)$ holds if and only if $\chi(|M|)$ holds; in view of this, the 
hypothesis implies $\chi(|A|)$ holds. 

Since $\ldim_{|\m|} |A| = \ldim_\m A < \infty$, we may apply 
\cite{RZ2}*{Proposition 2.2.6} and conclude that $|A|$ has property $\chi$. 
From this it follows that $\chi(|M|)$ holds for every $\ZZ^r$-graded 
$A$-module $M$, and hence so does $\chi(M)$.
\end{proof}

\paragraph
\label{chi-and-local-cohomology}
We finish this subsection with a technical result on the relation between 
property $\chi$ and local cohomology. 

\begin{Lemma*}
For every $n \in \NN$, let $A_{\geq n}$ be the ideal generated by all 
homogeneous elements of degree $\xi$ with $|\xi| \geq n$. Let $M$ be a 
finitely generated $\ZZ^r$-graded $A$-module such that $\chi(M)$ holds. Then 
for every $i \geq 0$ and every $t \in \ZZ$ there exists $n_0 \in \ZZ$ such 
that 
\begin{align*}
  \GrExt^i_A(A/A_{\geq n}, M)_\xi &\cong H^i_\m(M)_\xi
\end{align*}
for all $n \geq n_0$ and all $\xi \in \ZZ^r$ such that $|\xi| \geq t$.
\end{Lemma*}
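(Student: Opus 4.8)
The plan is to realize $H^i_\m(M)$ as a filtered colimit of the groups $\GrExt^i_A(A/A_{\geq n}, M)$ and then show that, in the range of degrees $\xi$ with $|\xi| \geq t$, this colimit stabilizes after a number of steps depending only on $i$ and $t$. The two ingredients are the cofinality of the filtration $\{A_{\geq n}\}$ with $\{\m^n\}$, and the finite dimensionality of $\GrExt^i_A(\k, M)$ granted by $\chi(M)$.

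First I would record the elementary structure of the filtration. Since $A$ is noetherian and connected, $\m = A_{\geq 1}$ is generated by finitely many homogeneous elements, say of degrees $\xi$ with $|\xi| \leq d$; as $|\cdot|$ is additive one gets $\m^n \subseteq A_{\geq n}$ and $A_{\geq n} \subseteq \m^{\lceil n/d\rceil}$ for all $n$, so the descending filtrations $\{\m^n\}$ and $\{A_{\geq n}\}$ are mutually cofinal. In particular each $A/A_{\geq n}$ is $\m$-torsion, and arguing exactly as in \ref{torsion-functor} one obtains $\GG_\m \cong \varinjlim_n \GrHom_A(A/A_{\geq n}, -)$; passing to right derived functors, and using that filtered colimits are exact, this gives a natural isomorphism $H^i_\m(M) \cong \varinjlim_n \GrExt^i_A(A/A_{\geq n}, M)$ for every $i$, the transition maps being induced by the surjections $A/A_{\geq n+1} \twoheadrightarrow A/A_{\geq n}$; taking the homogeneous component of degree $\xi$ commutes with this colimit.

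Next I would analyze the successive quotients. One checks that $(A_{\geq n})_\eta = A_\eta$ for $|\eta| \geq n$ and $0$ otherwise, so $A_{\geq n}/A_{\geq n+1}$ is concentrated in degrees $\eta$ with $|\eta| = n$, is finite dimensional, and is annihilated by $\m$; hence $A_{\geq n}/A_{\geq n+1} \cong \bigoplus_{|\eta| = n} \k[-\eta]^{\oplus \dim_\k A_\eta}$ in $\ZZ^r\Mod A$. Applying the shift isomorphism $\GrExt^j_A(\k[-\eta], M) \cong \GrExt^j_A(\k, M)[\eta]$ from \ref{grmod-generalities}, we obtain $\GrExt^j_A(A_{\geq n}/A_{\geq n+1}, M)_\xi \cong \bigoplus_{|\eta| = n} \big(\GrExt^j_A(\k, M)_{\xi + \eta}\big)^{\dim_\k A_\eta}$. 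By $\chi(M)$ each $\GrExt^j_A(\k, M)$ is finite dimensional, so for each $j$ there is $c_j \in \NN$ with $\GrExt^j_A(\k, M)_\zeta = 0$ whenever $|\zeta| > c_j$. Since $|\xi + \eta| = |\xi| + n \geq t + n$ when $|\xi| \geq t$, it follows that $\GrExt^j_A(A_{\geq n}/A_{\geq n+1}, M)_\xi = 0$ for all $\xi$ with $|\xi| \geq t$ as soon as $n > c_j - t$.

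Finally I would feed the short exact sequence $0 \to A_{\geq n}/A_{\geq n+1} \to A/A_{\geq n+1} \to A/A_{\geq n} \to 0$ into the long exact sequence of $\GrExt_A(-, M)$. Put $n_0 := \max(c_{i-1}, c_i) - t + 1$ (with $c_{-1} := 0$, so that the case $i = 0$ is covered). For every $n \geq n_0$ the terms $\GrExt^{i-1}_A(A_{\geq n}/A_{\geq n+1}, M)$ and $\GrExt^i_A(A_{\geq n}/A_{\geq n+1}, M)$ vanish in all degrees $\xi$ with $|\xi| \geq t$, so the natural map $\GrExt^i_A(A/A_{\geq n}, M)_\xi \to \GrExt^i_A(A/A_{\geq n+1}, M)_\xi$ is an isomorphism there; chaining these and comparing with the colimit yields $\GrExt^i_A(A/A_{\geq n}, M)_\xi \cong H^i_\m(M)_\xi$ for all $n \geq n_0$ and all $\xi$ with $|\xi| \geq t$. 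The only genuinely delicate point is the very first step — identifying $\varinjlim_n \GrExt^i_A(A/A_{\geq n}, M)$ with local cohomology, which rests on the cofinality of $\{A_{\geq n}\}$ with $\{\m^n\}$ and hence on $A$ being noetherian — together with keeping the various graded shifts consistent; the stabilization itself is then a routine consequence of the finiteness provided by $\chi(M)$.
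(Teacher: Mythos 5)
Your proof is correct, and it takes a genuinely different route from the paper's. Both arguments begin the same way: the mutual cofinality of $\{A_{\geq n}\}$ and $\{\m^n\}$ gives the natural identification $H^i_\m(M) \cong \varinjlim_n \GrExt^i_A(A/A_{\geq n},M)$. But where the paper then passes to the $\NN$-graded algebra $|A|$ via the change-of-grading isomorphisms of \cite{RZ2}*{Propositions 1.3.7 and 1.3.8} and quotes \cite{AZ}*{Proposition 3.5 (1)} for the stabilization of the maps $\pi^n_d$ in degrees $d \geq t$ (then unpacks $\pi^n_d = \bigoplus_{|\xi|=d}\pi^n_\xi$), you prove the stabilization directly in the $\ZZ^r$-graded setting: you identify $A_{\geq n}/A_{\geq n+1}$ with a finite direct sum of copies of $\k[-\eta]$ with $|\eta|=n$, use the suspension isomorphism and $\chi(M)$ to kill $\GrExt^{i-1}$ and $\GrExt^i$ of these quotients in all degrees $\xi$ with $|\xi|\geq t$ once $n$ is large, and conclude via the long exact sequence that the transition maps of the directed system are isomorphisms there. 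In effect you reprove the relevant portion of the Artin--Zhang result in the multigraded context, which makes the argument self-contained and avoids the change-of-grading bookkeeping; the paper's version is shorter given the references it can lean on. The only point you should make explicit is that $A_{\geq n}/A_{\geq n+1}$ is indeed finite dimensional: this follows since a noetherian connected $\NN^r$-graded algebra is locally finite (it is generated as an algebra by finitely many homogeneous generators of $\m$) and there are only finitely many $\eta \in \NN^r$ with $|\eta| = n$; this is standard and consistent with what the paper uses elsewhere.
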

\begin{proof}
Since $\m$ is finitely generated, say by elements $x_1, \ldots, x_r$ with 
degrees $\xi_1, \ldots, \xi_r$ such that $|\xi_i| \geq 1$, clearly $\m^n 
\subset A_{\geq n}$. Setting $l = \max \{|\xi_i| : 1 \leq i \leq r\}$ we 
obtain $A_{\geq ln} \subset \m^n$. Knowing this, the proof of 
\cite{BS}*{Proposition 3.1.1} easily adapts to show that for every $i \geq 0$ 
there exist natural isomorphisms
\begin{align*}
\varinjlim_n \GrExt^i_A(A/A_{\geq n}, -) 
  \cong \varinjlim_n \GrExt^i_A(A/\m^n, -) 
  \cong H^i_\m.
\end{align*} 
The statement of the lemma will follow if we show that for all $\xi$ as in the
statement, the homogeneous component of degree $\xi$ of the natural map
\begin{align*}
  \pi^n: \GrExt^i_A(A/A_{\geq n}, M) \to H^i_\m(M)
\end{align*}
is an isomorphism for $n \gg 0$.

Fixing $t$ as in the statement, \cite{AZ}*{Proposition 3.5 (1)} implies that
the natural map $\pi^n_d: \GrExt^i_{|A|}(|A|/|A_{\geq n}|, |M|)_d \to 
H^i_{|\m|}(|M|)_d$ is an isomorphism for all $d \geq t$ if $n$ is large 
enough. Now by \cite{RZ2}*{Propositions 1.3.7 and 1.3.8} there exist 
isomorphisms
\begin{align*}
\GrExt^i_{|A|}(|A|/|A_{\geq n}|, |M|)_d 
&\cong \bigoplus_{|\xi| = d}\GrExt^i_{A}(A/A_{\geq n}, M)_\xi;\\
H^i_{|\m|}(|M|)_d 
  &\cong \bigoplus_{|\xi| = d} H^i_\m(M)_\xi.
\end{align*}
Since the assignation $M \mapsto |M|$ is functorial, we also get that 
$\pi^n_d = \bigoplus_{|\xi| = d} \pi^n_\xi$. Thus for all $\xi$ such
that $|\xi| \geq t$, the map $\pi^n_\xi$ is an isomorphism if $n$ is large 
enough.
\end{proof}

\subsection*{Transfer of regularity properties by degeneration}
\label{transfer-of-regularity}
In this subsection we prove that if $A$ has a filtration compatible with its 
grading, and the associated graded algebra has property $\chi$, then the 
regularity properties discussed in the previous subsection transfer from 
$\gr A$ to $A$. All undefined terms regarding filtrations can be found in 
\cite{VO}*{chapter I}.

\paragraph
\label{regular-filtration}
Recall that $A$ denotes a noetherian $\NN^r$-graded algebra. The general setup 
for the subsection is as follows: we assume that $A$ has a \emph{connected} 
filtration, that is an exhaustive filtration $\F = \{F_nA\}_{n \geq 
0}$, with $\k = F_0A \subset F_1A \subset \cdots \subset F_nA \subset \cdots 
\bigcup_{n \geq 0} F_nA = A$, such that each layer $F_nA$ is a finite 
dimensional graded vector space, and $F_nA \cdot F_m A \subset F_{n+m}A$ for 
all $n, m \in \NN$. For each $\xi \in \NN^r$ the homogeneous component $A_\xi$ 
has an induced filtration $\{F_nA_\xi\}_{n \geq 0}$, where $F_nA_\xi = F_nA 
\cap A_\xi$. Since $A_\xi$ is finite dimensional this filtration is finite, so 
the associated graded ring $\gr A$ is a connected and locally finite 
$\NN^{r+1}$-graded algebra. 

Given any $\ZZ^r$-graded $A$-module $M$ with a filtration whose layers are 
$\ZZ^r$-graded subspaces, we can construct the $\ZZ^{r+1}$-graded 
$\gr A$-module $\gr M$. If $M$ is any $\ZZ^r$-graded $A$-module then it can be 
endowed with such a filtration as follows: fix a graded subspace $N \subset M$ 
that generates $M$ over $A$, and for each $n \geq 0$ set $F_nM = (F_nA) N$. 
Any such filtration is called \emph{standard}, and is an exhaustive and 
discrete filtration by graded subspaces. If $M$ is finitely generated and $N$ 
is finite dimensional then the layers of this filtration are also finite 
dimensional. 

\paragraph
\label{P:ext-ss} 
The main tool used to transfer homological information from $\gr A$ to $A$ is 
a spectral sequence that we associate to any pair of $\ZZ^r$-graded 
$A$-modules $N, M$, which converges to $\GrExt^i_A(N,M)$ and whose first page 
consists of the homogeneous components of $\GrExt^i_{\gr A}(\gr N, \gr M)$. 
The proof is straightforward, but relies on several graded analogues of 
classical constructions for filtered rings. These constructions can be found 
in \cite{VO}*{Chapter I} and \cite{MR}*{Section 7.6}, and the proofs found in 
the references easily adapt to the graded context, so we use them without 
further comment.

In order to keep track of the extra component in the grading when passing to 
associated graded objects, we make a slight abuse of notation: given a 
$\ZZ^{r+1}$-graded vector space $V$, we denote by $V_{(\xi, p)}$ its 
homogeneous component of degree $(\xi_1, \ldots, \xi_r, p)$.

\begin{Proposition*} 
Let $A$ be an $\NN^r$-graded algebra with a connected filtration, and 
assume $\gr A$ is noetherian. Let $M, N$ be filtered $\ZZ^r$-graded 
$A$-modules, with $N$ finitely generated, and suppose that the filtration on 
$N$ is standard and the filtration on $M$ is discrete. Then
for every $\xi \in \ZZ^r$ there exists a convergent spectral sequence
\begin{align*} 
  E(N,M)_\xi: E_{p,q}^1 = \GrExt_{\gr A}^{-p-q}(\gr N, \gr M)_{(\xi,p)}
    &\Rightarrow \GrExt_A^{-p-q}(N,M)_\xi 
    &p,q \in \ZZ, 
\end{align*} 
such that the filtration of the vector spaces on the right hand side is finite.
\end{Proposition*}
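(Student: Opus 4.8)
The plan is to obtain the spectral sequence as the standard spectral sequence of a filtered complex, applied to the complex computing $\GrExt_A(N,M)$. First I would take a free resolution $P_\bullet \to N$ in $\ZZ^r\mmod A$ by finitely generated graded free modules; since $N$ carries a standard filtration, the references in \cite{VO}*{Chapter I} and \cite{MR}*{Section 7.6} provide a \emph{filtered} free resolution, i.e. each $P_i$ is a filtered free module whose associated graded $\gr P_i$ is free over $\gr A$, the differentials are filtered maps, and $\gr P_\bullet \to \gr N$ is a free resolution of $\gr N$ over $\gr A$ (here one uses that $\gr A$ is noetherian, so that such a resolution exists and can be taken finitely generated in each degree). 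Applying $\GrHom_A(-, M)$ gives a cochain complex $C^\bullet = \GrHom_A(P_\bullet, M)$, and because $M$ is discretely filtered and the $P_i$ are filtered free of finite rank, each $C^i$ inherits a filtration by graded subspaces; the key computation is that $\gr C^\bullet \cong \GrHom_{\gr A}(\gr P_\bullet, \gr M)$ as complexes of $\ZZ^{r+1}$-graded vector spaces, whose cohomology is $\GrExt_{\gr A}(\gr N, \gr M)$.

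Next I would fix $\xi \in \ZZ^r$ and restrict attention to the homogeneous component $C^\bullet_\xi$, which is a complex of filtered $\k$-vector spaces. The crucial point is that this filtration on $C^\bullet_\xi$ is \emph{finite} (hence exhaustive, bounded, and complete): this follows because $N$ is finitely generated and $M$ is discretely filtered, so for fixed $\xi$ only finitely many layers $F_p C^i_\xi$ are distinct and nonzero — concretely, the generators of $P_i$ sit in bounded filtration degree and the discreteness of $M$ bounds the filtration on $\GrHom_A(P_i, M)_\xi$ from the other side. With a bounded filtration on a complex, the associated spectral sequence converges, and its $E^1$-page is the cohomology of the associated graded complex. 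Indexing so that $F_p$ is the filtration level and $-p-q$ is the cohomological degree (the Ext degree), this reads
\begin{align*}
E^1_{p,q} = H^{-p-q}(\gr C^\bullet_\xi)_{(\xi,p)} = \GrExt_{\gr A}^{-p-q}(\gr N, \gr M)_{(\xi,p)} \Rightarrow H^{-p-q}(C^\bullet_\xi) = \GrExt_A^{-p-q}(N,M)_\xi,
\end{align*}
which is the asserted spectral sequence $E(N,M)_\xi$; the induced filtration on the abutment is the image of the finite filtration on $C^\bullet_\xi$, hence finite.

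The main obstacle — really the only content beyond bookkeeping — is verifying that the filtration on each $C^i_\xi = \GrHom_A(P_i, M)_\xi$ is finite, together with the compatibility $\gr \GrHom_A(P_\bullet, M) \cong \GrHom_{\gr A}(\gr P_\bullet, \gr M)$. The identification of associated gradeds is where one genuinely uses that the $P_i$ are filtered \emph{free} of finite rank: for a free module $\gr$ and $\GrHom$ commute up to the expected shift of the extra grading component, and finite rank is what lets one pass the finite direct sum through. Discreteness of the filtration on $M$ is exactly what forbids the filtration on the Hom-spaces from running off to $-\infty$ in a fixed graded degree, while the standardness (in particular finite generation) of the filtration on $N$ controls it from the other side; once both bounds are in place, convergence is automatic and no completeness hypothesis beyond finiteness is needed. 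I would treat the adaptation of the filtered-ring constructions from \cite{VO} and \cite{MR} to the $\ZZ^r$-graded setting as routine, exactly as the paragraph preceding the statement licenses.
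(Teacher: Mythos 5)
Your proposal follows essentially the same route as the paper: a filtered projective resolution of $N$ whose associated graded is a resolution of $\gr N$ over $\gr A$, the filtration on $\GrHom_A(P^\bullet,M)$, the spectral sequence of the resulting filtered complex in each fixed degree $\xi$, and the identification $\gr\GrHom_A(P^\bullet,M)\cong\GrHom_{\gr A}(\gr P^\bullet,\gr M)$ to compute the $E^1$-page. The only cosmetic difference is in the convergence bookkeeping: you bound the filtration on the complex $\GrHom_A(P^\bullet,M)_\xi$ directly, while the paper appeals to finite dimensionality of these homogeneous components (and of the abutment), both arguments resting on the same finiteness of the relevant homogeneous pieces of $M$.
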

\begin{proof}
By the $\ZZ^r$-graded version of \cite{MR}*{Theorem 6.17}, there exists a 
projective resolution $P^\bullet \to N$ by filtered projective $\ZZ^r$-graded 
$A$-modules with filtered differentials, such that the associated graded 
complex $\gr P^\bullet \to \gr N$ is a $\ZZ^{r+1}$-graded projective 
resolution of the $\gr A$-module $\gr N$. Using the filtration for the $\GrHom$
spaces defined in \cite{VO}*{section I.2}, the complex $\GrHom_A(P^\bullet, M)$
is a graded complex with a filtration by graded subcomplexes, whose 
differentials are filtered maps.

If we fix $\xi \in \ZZ^r$, the homogeneous component $\GrHom_A(P^\bullet, 
M)_\xi$ is a complex of filtered finite dimensional vector spaces. By 
\cite{W}*{5.5.1.2} there exists a spectral sequence with page one equal to
\begin{align*} 
  E_{p,q}^1 
  &= H_{p+q}\left(\frac{F_p \GrHom_A(P^\bullet,M)_\xi}
      {F_{p-1}  \GrHom_A(P^\bullet,M)_\xi}\right) 
    & p,q \in \ZZ,
\end{align*} 
that converges to   
\begin{align*} 
  H_{p+q}(\GrHom_{A}(P^\bullet,M)_\xi) \cong \GrExt^{-p-q}_A(N,M)_\xi.
\end{align*}
This last space is finite dimensional, and hence the filtration on it is 
finite. Thus we only need to prove that for each $p,q \in \ZZ$ there exists an 
isomorphism
\begin{align*} 
  H_{p+q}
  \left(
    \frac{F_p \GrHom_A(P^\bullet,M)_\xi}{F_{p-1} \GrHom_A(P^\bullet,M)_\xi}
  \right) 
    &\cong \GrExt_{\gr A}^{-p-q}(\gr N, \gr M)_{(\xi,p)}.
\end{align*} 
By \cite{VO}*{Lemma 6.4}, there exists an isomorphism of complexes 
\begin{align*}
\phi(P^\bullet, M): 
	\gr (\GrHom_A(P^\bullet, M)) 
		\to \GrHom_{\gr A}(\gr P^\bullet, \gr M),
\end{align*}
which is defined explicitly in the reference. Direct 
inspection shows that the map $\phi(P^\bullet, M)$ is homogeneous, so looking 
at its component of degree $\xi$ we obtain an isomorphism
\begin{align*}
\frac{F_p \GrHom_A(P^\bullet,M)_\xi}{F_{p-1} \GrHom_A(P^\bullet,M)}_\xi 
  \cong \GrHom_{\gr A}(\gr P^\bullet, \gr M)_{(\xi,p)}.
\end{align*}
Since $\gr P^\bullet$ is a $\ZZ^{r+1}$-graded projective resolution of $\gr 
N$, we obtain the desired isomorphism by applying $H_{p+q}$ to both sides of 
the isomorphism.
\end{proof}

The following Corollary is an immediate consequence of the previous 
Proposition.
\begin{Corollary*}
Let $A$ be an $\NN^r$-graded algebra with a connected filtration, and 
assume $\gr A$ is noetherian. Let $M, N$ be filtered $\ZZ^r$-graded 
$A$-modules, with $N$ finitely generated, and suppose that the filtration on 
$N$ is standard and the filtration on $M$ is discrete. Then the following hold.
\begin{enumerate}[label=(\alph*)]
\item \label{I:dims} For each $i \geq 0$ and each $\xi \in \ZZ^{r}$
\[
 \dim_\k \GrExt^i_A(N,M)_\xi 
   \leq \sum_{p=-\infty}^\infty 
    \dim_\k \GrExt^i_{\gr A}(\gr N, \gr M)_{(\xi, p)}.
\]
  
\item \label{I:pd-id} $\pd_A^{\ZZ^r} N \leq \pd_{\gr A}^{\ZZ^{r+1}} \gr N$ 
and $\id_A^{\ZZ^r} M \leq \id_{\gr A}^{\ZZ^{r+1}} \gr M$. 

\item \label{I:chi} If $\chi(\gr M)$ holds then $\chi(M)$ holds.
\end{enumerate}
\end{Corollary*}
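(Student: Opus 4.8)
The plan is to derive all three items directly from the spectral sequence $E(N,M)_\xi$ of the preceding Proposition, using nothing more than convergence and finiteness of the filtration on the abutment.

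\medskip

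\noindent\textbf{Part \ref{I:dims}.} First I would fix $i \geq 0$ and $\xi \in \ZZ^r$ and recall that for a convergent spectral sequence with a finite filtration on the abutment, $\dim_\k \GrExt^i_A(N,M)_\xi = \sum_p \dim_\k E^\infty_{p, -i-p}$, since the associated graded of the filtration on $\GrExt^i_A(N,M)_\xi$ recovers its dimension. Since each successive page is a subquotient of the previous one, $\dim_\k E^\infty_{p,q} \leq \dim_\k E^1_{p,q}$ for all $p,q$. Summing over $p$ with $q = -i-p$ (so that $-p-q = i$) gives
\[
  \dim_\k \GrExt^i_A(N,M)_\xi
    = \sum_{p} \dim_\k E^\infty_{p,-i-p}
    \leq \sum_{p} \dim_\k E^1_{p,-i-p}
    = \sum_{p} \dim_\k \GrExt^i_{\gr A}(\gr N, \gr M)_{(\xi,p)},
\]
which is the claimed inequality.

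\medskip

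\noindent\textbf{Part \ref{I:pd-id}.} For the projective dimension bound, suppose $\pd^{\ZZ^{r+1}}_{\gr A} \gr N = d < \infty$ (otherwise there is nothing to prove). Then $\GrExt^i_{\gr A}(\gr N, -) = 0$ for $i > d$, so by Part \ref{I:dims} applied with $M$ ranging over all finitely generated $\ZZ^r$-graded modules equipped with a discrete filtration, $\GrExt^i_A(N,M)_\xi = 0$ for all $i > d$ and all $\xi$; since every finitely generated module admits such a filtration and projective dimension can be computed via vanishing of $\GrExt$ against finitely generated (indeed cyclic) modules, $\pd^{\ZZ^r}_A N \leq d$. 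The injective dimension bound is the dual statement: fix $N$ ranging over finitely generated modules with standard filtration, and use that $\GrExt^i_{\gr A}(-, \gr M) = 0$ for $i$ above $\id^{\ZZ^{r+1}}_{\gr A} \gr M$ together with Part \ref{I:dims} to conclude $\GrExt^i_A(N, M) = 0$ in the same range, hence $\id^{\ZZ^r}_A M$ is bounded accordingly.

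\medskip

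\noindent\textbf{Part \ref{I:chi}.} Here I would take $N = \k$ with its (necessarily standard and finite) filtration, so that $\gr N = \k$ as a $\ZZ^{r+1}$-graded $\gr A$-module. Assuming $\chi(\gr M)$ holds, each $\GrExt^i_{\gr A}(\k, \gr M)$ is a finite dimensional $\ZZ^{r+1}$-graded vector space, so in particular $\sum_p \dim_\k \GrExt^i_{\gr A}(\k, \gr M)_{(\xi,p)} < \infty$ and these sums vanish for all but finitely many $\xi$. By Part \ref{I:dims}, $\dim_\k \GrExt^i_A(\k, M)_\xi$ is bounded by this quantity for every $\xi$, hence $\GrExt^i_A(\k, M)$ is finite dimensional; that is, $\chi(M)$ holds.

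\medskip

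\noindent The only delicate point is bookkeeping: making sure the indexing convention $E^1_{p,q} = \GrExt^{-p-q}_{\gr A}(\gr N, \gr M)_{(\xi,p)}$ is tracked correctly so that fixing cohomological degree $i$ corresponds to summing over the antidiagonal $q = -i-p$, and invoking the finiteness of the filtration on the abutment (guaranteed by the Proposition) to justify the first equality in Part \ref{I:dims}. Everything else is a formal consequence of standard spectral sequence estimates.
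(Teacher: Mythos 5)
Your proof is correct and follows exactly the route the paper intends: the Corollary is stated there as an immediate consequence of the spectral sequence Proposition, with (a) coming from the subquotient estimate $\dim E^\infty_{p,q}\leq\dim E^1_{p,q}$ plus finiteness of the filtration on the abutment, and (b), (c) obtained by specializing the test modules (using that over the noetherian algebra $A$ projective and injective dimension can be detected against finitely generated, standardly filtered modules, and taking $N=\k$ for property $\chi$). Your bookkeeping of the antidiagonal $q=-i-p$ and the reduction to finitely generated test objects are exactly the details the paper leaves implicit.
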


\paragraph
\label{ldim} 
We now prove a result that relates the local cohomology of a $\ZZ^r$-graded 
$A$-module $M$ with that of its associated graded module $\gr M$. We will do 
this by combining Proposition \ref{P:ext-ss} with Lemma 
\ref{chi-and-local-cohomology} and the formalism of the change of grading 
functors introduced in \cite{RZ}*{Section 1.3}. We recall the
relevant details. Given a group morphism $\phi: \ZZ^r \to \ZZ^t$, with $t \in 
\NN$, there exists a functor $\phi_!: \ZZ^r\Mod \k \to \ZZ^t \Mod \k$ that 
sends a $\ZZ^r$-graded vector space $V$ to the $\ZZ^t$-graded vector space 
$\phi_!(V)$, whose homogeneous component of degree $\zeta \in \ZZ^t$ is
\[
  V_\zeta = \bigoplus_{\phi(\xi) = \zeta} V_\xi.
\]
Notice that $\phi_!$ does not change the underlying vector space of its 
argument, only its grading. If $R$ is a $\ZZ^r$-graded algebra then $\phi_!(R)$
is a $\ZZ^t$-graded algebra, and if $M$ is a $\ZZ^r$-graded $R$-module then 
$\phi_!(M)$ is a $\ZZ^t$-graded $\phi_!(R)$-module with the same underlying 
$R$-module structure as $M$.

\begin{Corollary*}
Let $A$ be an $\NN^r$-graded algebra with a connected filtration, and 
assume $\gr A$ is noetherian. Let $M$ be a filtered $\ZZ^r$-graded $A$-module 
with a discrete filtration, and assume $\chi(\gr M)$ holds. Then for each 
$i \geq 0$ and each $\xi \in \ZZ^r$ 
\[
  \dim_\k H^i_\m(M)_\xi 
    \leq \sum_{p=- \infty}^\infty \dim_\k H^i_{\gr \m}(\gr M)_{(\xi, p)}.
\]
\end{Corollary*}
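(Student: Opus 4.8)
The plan is to combine the three tools assembled above: the spectral sequence of Proposition~\ref{P:ext-ss}, the comparison between $\GrExt$ against $A/A_{\geq n}$ and local cohomology from Lemma~\ref{chi-and-local-cohomology}, and the change-of-grading functor $\phi_!$ to repackage the extra grading component. The point is that $H^i_\m(M)$ is not directly the target of a spectral sequence with $\gr$-data on the first page, but $\GrExt^i_A(A/A_{\geq n}, M)$ \emph{is}, and for $n$ large enough the two agree in any fixed range of degrees; the same holds downstairs for $\gr A$ and $\gr M$.

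First I would fix $i \geq 0$ and $\xi \in \ZZ^r$, and set $t = |\xi|$. Since $\chi(\gr M)$ holds by hypothesis, Corollary~\ref{I:chi} gives that $\chi(M)$ holds as well, so Lemma~\ref{chi-and-local-cohomology} applies to both $M$ over $A$ and $\gr M$ over $\gr A$ (note $\gr\m$ is generated by homogeneous elements of total degree $\geq 1$, since the filtration is connected, so the hypotheses of that lemma are met). Thus there is $n_0$ such that for all $n \geq n_0$ one has $\GrExt^i_A(A/A_{\geq n}, M)_\xi \cong H^i_\m(M)_\xi$, and likewise, possibly after enlarging $n_0$, $\GrExt^i_{\gr A}(\gr A/(\gr A)_{\geq n}, \gr M)_{(\xi,p)} \cong H^i_{\gr\m}(\gr M)_{(\xi,p)}$ for all $p$ — here one must be a little careful, since Lemma~\ref{chi-and-local-cohomology} fixes the total degree, and the total degree of $(\xi,p)$ in the $\NN^{r+1}$-grading of $\gr A$ is $|\xi|+p$, which varies with $p$; but $H^i_{\gr\m}(\gr M)$ is bounded (it is finitely generated–cofinite in the relevant sense once $\chi$ holds and $\ldim$ is finite), so only finitely many values of $p$ contribute, and for those finitely many total degrees a single $n_0$ works.

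Next I would apply Proposition~\ref{P:ext-ss} to the pair $N = A/A_{\geq n}$ and $M$, with the standard filtration on $N$ and the (discrete) filtration on $M$. One needs $\gr(A/A_{\geq n}) \cong \gr A/(\gr A)_{\geq n}$, which follows from the explicit description of the standard filtration on $A/A_{\geq n}$ and the fact that $A_{\geq n}$ is a graded ideal; this identification is the one mildly technical point, best dispatched by checking it on homogeneous components. The spectral sequence then reads $E^1_{p,q} = \GrExt^{-p-q}_{\gr A}(\gr A/(\gr A)_{\geq n}, \gr M)_{(\xi,p)} \Rightarrow \GrExt^{-p-q}_A(A/A_{\geq n}, M)_\xi$, and Corollary~\ref{I:dims} gives immediately $\dim_\k \GrExt^i_A(A/A_{\geq n}, M)_\xi \leq \sum_{p} \dim_\k \GrExt^i_{\gr A}(\gr A/(\gr A)_{\geq n}, \gr M)_{(\xi,p)}$.

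Finally I would substitute the isomorphisms from the second step into both sides of this inequality, for $n \geq n_0$: the left side becomes $\dim_\k H^i_\m(M)_\xi$, and each summand on the right becomes $\dim_\k H^i_{\gr\m}(\gr M)_{(\xi,p)}$, yielding the desired bound. The main obstacle, as flagged above, is matching the ``large $n$'' thresholds on the two sides of the spectral sequence simultaneously — the downstairs estimate in Lemma~\ref{chi-and-local-cohomology} is phrased one total-degree at a time, whereas the sum over $p$ ranges over infinitely many total degrees a priori; the resolution is that finite local cohomological dimension together with $\chi(\gr M)$ forces $H^i_{\gr\m}(\gr M)$ to be finite dimensional in each $\xi$-slice and, more to the point, supported in finitely many values of $p$ for our fixed $\xi$, so that only finitely many thresholds $n_0(p)$ need to be exceeded and we may take their maximum. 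Everything else is a matter of bookkeeping with the functor $\phi_!$ and the grading conventions of \ref{P:ext-ss}.
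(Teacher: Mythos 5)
Your proposal follows the same skeleton as the paper's proof (apply Proposition \ref{P:ext-ss} and item \ref{I:dims} of its Corollary to $N = A/A_{\geq n}$, then trade $\GrExt$ against $A/A_{\geq n}$ for local cohomology on both levels via Lemma \ref{chi-and-local-cohomology}), but the step where you handle the extra grading component $p$ downstairs has a genuine gap. You apply Lemma \ref{chi-and-local-cohomology} to the $\NN^{r+1}$-graded algebra $\gr A$ degree by degree at $(\xi,p)$ and then argue that only finitely many $p$ matter. Two problems. First, the finiteness claim --- that $H^i_{\gr\m}(\gr M)_{(\xi,p)}$ vanishes for all but finitely many $p$ at fixed $\xi$ --- is not proved: finite local cohomological dimension is not among the hypotheses, and what $\chi(\gr M)$ gives (vanishing in large \emph{total} degree, via the \cite{AZ}-type statement used inside Lemma \ref{chi-and-local-cohomology}) is only an upper bound on $p$, not a lower one; the clean proof of the full claim is precisely the change-of-grading argument you relegate to ``bookkeeping''. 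Second, and more seriously, even granting that finiteness, your chain of (in)equalities needs $\sum_p \dim_\k \GrExt^i_{\gr A}(\gr(A/A_{\geq n}), \gr M)_{(\xi,p)} \leq \sum_p \dim_\k H^i_{\gr\m}(\gr M)_{(\xi,p)}$, and the degreewise Lemma says nothing about the $\GrExt$ terms in degrees $(\xi,p)$ of total degree below your threshold $t$; those terms enter the inequality with the wrong sign, and their $p$-support depends on $n$ (through a resolution of $\gr(A/A_{\geq n})$), while $n$ must be chosen after $t$ --- so ``take the maximum of finitely many thresholds'' does not close the argument. Relatedly, the identification $\gr(A/A_{\geq n}) \cong \gr A/(\gr A)_{\geq n}$ is false if $(\gr A)_{\geq n}$ means the ideal defined by total degree in the $\NN^{r+1}$-grading, which is the ideal the Lemma over $\gr A$ actually concerns; what the spectral sequence produces is $\gr A/\gr(A_{\geq n})$.

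The paper avoids all of this by actually using the change-of-grading functor where you only invoke it in passing: it sets $B = \pi_!(\gr A)$ for the projection $\pi: \ZZ^{r+1} \to \ZZ^r$, with $\n = \pi_!(\gr \m)$ and $\tilde M = \pi_!(\gr M)$, notes that $\chi(\tilde M)$ holds by \cite{RZ2}*{Proposition 1.3.7}, and applies Lemma \ref{chi-and-local-cohomology} over the $\NN^r$-graded algebra $B$ at the single degree $\xi$. By \cite{RZ2}*{Propositions 1.3.7 and 1.3.8} the entire $\xi$-slice $\bigoplus_p \GrExt^i_{\gr A}(\gr A/\gr(A_{\geq n}), \gr M)_{(\xi,p)}$ is identified with $\GrExt^i_B(B/B_{\geq n}, \tilde M)_\xi$, and likewise $\bigoplus_p H^i_{\gr\m}(\gr M)_{(\xi,p)} \cong H^i_\n(\tilde M)_\xi$, so a single threshold $n_0$ yields an isomorphism of whole slices, with no stray low-degree terms and with the finiteness of the $\xi$-slice obtained for free. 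To repair your write-up, replace the degreewise application of the Lemma over $\gr A$ by this $\pi_!$-argument; the rest of your plan then goes through as stated.
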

\begin{proof}
Let $\pi: \ZZ^{r+1} \to \ZZ^r$ be the projection to the first $r$-coordinates, 
and set $B = \pi_!(\gr A)$, so for every $\xi \in \ZZ^r$
\begin{align*}
B_\xi = \bigoplus_{p \in \ZZ} \gr A_{(\xi, p)} = \gr (A_\xi).
\end{align*}
Thus $B$ is a connected $\NN^r$-graded algebra. Set $\n = \pi_!(\gr \m)$, 
which is the maximal graded ideal of $B$, and set $\tilde M = \pi_!(\gr M)$.

By \cite{RZ2}*{Proposition 1.3.7} $\chi(\gr M)$ implies
$\chi(\tilde M)$ and by item \ref{I:chi} of Corollary \ref{P:ext-ss} it 
also implies $\chi(M)$, so we may apply Lemma \ref{chi-and-local-cohomology} 
to $M$ and $\tilde M$, and deduce that for given $\xi \in \ZZ^r$ and $i \geq 0$
the natural maps 
\begin{align*}
  \GrExt^i_A(A/A_{\geq n}, M)_\xi &\to H^i_\m(M)_\xi\\
  \GrExt^i_B(B/B_{\geq n}, \tilde M)_\xi &\to H^i_\n( \tilde M)_\xi
\end{align*}
are isomorphisms for $n \gg 0$.

Combining this with \cite{RZ2}*{Propositions 1.3.7 and 1.3.8}, we obtain a 
chain of isomorphisms
\begin{align*}
\bigoplus_{p \in \ZZ} 
  \GrExt^i_{\gr A}&\left(\frac{\gr A}{\gr (A_{\geq n})}, 
    \gr M \right)_{(\xi, p)} \\
  &\cong 
    \GrExt^i_B \left(B/B_{\geq n}, \tilde M \right)_\xi 
  \cong 
    H^i_\n(\tilde M)_\xi 
  \cong 
    \bigoplus_{p \in \ZZ} H^i_{\gr \m}(\gr M)_{(\xi, p)} 
\end{align*}
By definition $\gr(A/A_{\geq n}) \cong \gr A / \gr (A_{\geq n})$, so applying 
item  \ref{I:dims} of Corollary \ref{P:ext-ss} and taking $n \gg 0$ we obtain
\begin{align*}
\dim_\k H^i_\m(M)_\xi 
  &= \dim_\k \GrExt^i_A(A/A_{\geq n}, M)_\xi \\
  &\leq \sum_{p = -\infty}^\infty \dim_\k
  \GrExt^i_{\gr A}(\gr A / \gr (A_{\geq n}), M)_{(\xi, p)} \\
  &= \sum_{p = -\infty}^\infty \dim_\k H^i_{\gr \m}(\gr M)_{(\xi, p)}.
\end{align*}
\end{proof}

\paragraph
\label{transfer}
We are now ready to prove the main result of this section. 
\begin{Theorem*}
Suppose $A$ is a connected $\NN^r$-graded algebra endowed with a connected 
filtration, and that $\gr A$ is noetherian and has property $\chi$. Then the 
following hold.
\begin{enumerate}[label=(\alph*)]
\item $A$ has property $\chi$.
\item $\lcd_\m A \leq \lcd_{\gr \m} \gr A$.
\item If $\gr A$ is AS-Cohen-Macaulay, AS-Gorenstein or AS-Regular, so is $A$.
\end{enumerate}
\end{Theorem*}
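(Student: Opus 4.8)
The strategy is to reduce each item to the tools assembled earlier in the subsection: the spectral sequence of Proposition \ref{P:ext-ss}, its Corollary on $\Ext$-dimensions, projective/injective dimensions and property $\chi$, and the Corollary of paragraph \ref{ldim} comparing local cohomology of $M$ with that of $\gr M$. Throughout I would apply these with $N = M = A$, equipped with the trivial (standard and discrete) filtration coming from $\F$, so that $\gr A$ with its $\NN^{r+1}$-grading plays the role of the associated graded object; note $\gr \m$ is the maximal graded ideal of $\gr A$.

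For part (a): by item \ref{I:chi} of the Corollary to Proposition \ref{P:ext-ss}, $\chi(\gr M)$ implies $\chi(M)$ for every filtered $\ZZ^r$-graded $A$-module with discrete filtration; in particular, taking $M = A$ with its standard filtration, $\chi(\gr A)$ (which holds since $\gr A$ has property $\chi$) gives $\chi(A)$. To upgrade this to \emph{property} $\chi$ for $A$, I would first prove part (b), since the Lemma of paragraph \ref{invariance-by-grading} then lets me conclude: $\chi(A)$ together with $\lcd_\m A < \infty$ implies $A$ has property $\chi$. So the logical order is (b), then (a), then (c).

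For part (b): fix a finitely generated $\ZZ^r$-graded $A$-module $M$ and put on it a standard filtration with finite-dimensional layers; then $\gr M$ is a finitely generated $\ZZ^{r+1}$-graded $\gr A$-module, so $\pi_!(\gr M)$ is finitely generated over $B = \pi_!(\gr A)$ and $\chi(\gr M)$ holds (as $\gr A$ has property $\chi$). The Corollary of paragraph \ref{ldim} then gives, for each $i$ and each $\xi$,
\[
  \dim_\k H^i_\m(M)_\xi \leq \sum_{p \in \ZZ} \dim_\k H^i_{\gr \m}(\gr M)_{(\xi,p)}.
\]
In particular, if $H^i_{\gr \m}(\gr M) = 0$ then $H^i_\m(M) = 0$; since $\ldim_{\gr \m} \gr M \leq \lcd_{\gr \m} \gr A$ for all finitely generated $\gr M$, and every finitely generated $M$ arises this way, we get $\ldim_\m M \leq \lcd_{\gr \m}\gr A$, hence $\lcd_\m A \leq \lcd_{\gr \m}\gr A < \infty$. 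This finishes (b) and, combined with the Lemma of \ref{invariance-by-grading}, completes (a).

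For part (c): the AS-Cohen-Macaulay case follows directly from the inequality displayed above applied to $M = A$ over both $A$ and $A^\opp$ (note that a connected filtration on $A$ induces one on $A^\opp$ with $\gr(A^\opp) = (\gr A)^\opp$): if $H^i_{\gr\m}(\gr A) = 0$ and $H^i_{(\gr\m)^\opp}(\gr A) = 0$ for all $i \neq n$, the same vanishing holds for $A$. For the AS-Gorenstein case, item \ref{I:pd-id} of the Corollary to Proposition \ref{P:ext-ss} gives $\id_A^{\ZZ^r} A \leq \id_{\gr A}^{\ZZ^{r+1}} \gr A < \infty$, so $A$ has finite graded injective dimension, and then $\dim_\k \GrExt^i_A(\k, A)_\xi \leq \sum_p \dim_\k \GrExt^i_{\gr A}(\k, \gr A)_{(\xi,p)}$ from item \ref{I:dims}; since $\gr A$ AS-Gorenstein means the right-hand side is $0$ unless $i = n$, and is one-dimensional, concentrated in a single degree $(\ell, p_0)$, when $i = n$, we deduce $\GrExt^i_A(\k,A) = 0$ for $i \neq n$ and $\dim_\k \GrExt^n_A(\k,A) \leq 1$. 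The one remaining point is to show this $\Ext$ is actually nonzero and sits in degree $\ell \in \ZZ^r$ (the image of $(\ell, p_0)$ under $\pi$), which I expect to be the main technical obstacle: one argues via the spectral sequence $E(\k,A)_\ell$, whose first page is concentrated (in total degree $-p-q = n$) in a single spot, so it degenerates and forces $\GrExt^n_A(\k,A)_\ell \neq 0$. Applying the same reasoning to $A^\opp$ and checking that the shifts match (again using $\gr(A^\opp) = (\gr A)^\opp$ and the compatibility of $\pi_!$ with opposites) gives that $A$ is AS-Gorenstein with Gorenstein shift $\ell$. Finally, for AS-regular, item \ref{I:pd-id} also bounds the graded global dimension of $A$ by that of $\gr A$ on both sides, so finiteness and equality of left and right global dimensions transfer, completing the proof.
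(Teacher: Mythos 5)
Your toolkit is the right one, and parts (b) and most of (c) follow the paper's own route (Corollary \ref{ldim} for the local cohomology bound and the AS-Cohen-Macaulay case; items \ref{I:dims} and \ref{I:pd-id} of the Corollary to Proposition \ref{P:ext-ss} plus the degeneration of $E(\k,A)_\zeta$ at page one for the Gorenstein case). However, your proof of (a) has a genuine gap. You apply item \ref{I:chi} only to $M=A$ and then try to upgrade $\chi(A)$ to property $\chi$ via the Lemma of \ref{invariance-by-grading}, which requires $\lcd_\m A<\infty$; you obtain this from (b) only after writing $\lcd_\m A\leq\lcd_{\gr\m}\gr A<\infty$, and the finiteness of $\lcd_{\gr\m}\gr A$ is not among the hypotheses --- the theorem assumes only that $\gr A$ is noetherian with property $\chi$, and the paper treats finite local dimension as a genuinely separate condition (see the explicit caveat in \ref{invariance-by-grading} that it is not known to follow from $\chi$). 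So as written your argument for (a) only covers the case where $\gr A$ has finite local cohomological dimension. The fix is already contained in your own first sentence: give an arbitrary finitely generated $\ZZ^r$-graded $M$ a standard filtration, note that $\gr M$ is then a finitely generated $\gr A$-module so $\chi(\gr M)$ holds by hypothesis, and apply item \ref{I:chi} directly to conclude $\chi(M)$; this is what the paper does, and it removes both the unwanted hypothesis and the artificial ordering of (b) before (a).

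A smaller gap sits in the AS-regular case: you assert that finiteness \emph{and equality} of the left and right graded global dimensions transfer, but item \ref{I:pd-id} only gives an upper bound on each side, and two quantities bounded by the same number need not be equal. The paper closes this by invoking \cite{RZ2}*{Lemma 2.1.5}: the left and right graded global dimensions of $A$ equal the corresponding projective dimensions of $\k$, which, since $A$ has already been shown to be AS-Gorenstein, coincide with the left and right injective dimensions of $A$, and these agree. Finally, a cosmetic point in the Gorenstein step: your dimension count produces $\GrExt^n_A(\k,A)$ one-dimensional and concentrated in a single degree, i.e.\ isomorphic to a shift of $\k$ as graded vector spaces; to match the definition you should note, as the paper does, that any such isomorphism is automatically $A^\opp$-linear because $\m^\opp$ consists of elements of nonzero degree and hence acts trivially on a module concentrated in one degree.
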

\begin{proof}
The hypothesis that $\gr A$ is noetherian implies that $A$ is noetherian 
\cite{MR}*{1.6.9}. If $M$ is any finitely generated $\ZZ^r$-graded $A$-module, 
we may filter it using the procedure described in \ref{regular-filtration}. 
Since $\chi(\gr M)$ holds by hypothesis, item \ref{I:chi} of Corollary 
\ref{P:ext-ss} implies $\chi(M)$ holds, which proves that $A$ has property 
$\chi$. 

Item 2 follows from Corollary \ref{ldim}, as does the fact that if $\gr A$ is
AS-Cohen-Macaulay so is $A$. If $\gr A$ is AS-Gorenstein of injective 
dimension $n$ and Gorenstein shift $(\xi, p) \in \ZZ^{r+1}$, the algebra $A$ 
has injective dimension at most $n$ by item \ref{I:pd-id} of Corollary 
\ref{P:ext-ss}. Also, for each $\zeta \in \ZZ^r$ the spectral sequence 
$E(\k, A)_\zeta$ of Proposition \ref{P:ext-ss} degenerates at page $1$: it is 
zero if $\zeta \neq -\xi$, while for $\zeta = - \xi$ there is a single, one 
dimensional non-zero entry in the diagonal $-p-q = n$. Hence we obtain vector 
space isomorphisms
\begin{align*}
  \GrExt^i_A(\k, A)_\zeta 
    &\cong  \begin{cases}
      \k & \mbox{ if  $\zeta = -\xi$ and $i = n$;} \\
      0 & \mbox{otherwise.}
    \end{cases}
\end{align*}
Thus $\GrExt^i_A(\k,A) \cong \k[\xi]$ as $\ZZ^r$-graded vector spaces.
Any such isomorphism is also $A^\opp$-linear, so $A$ is left AS-Gorenstein of 
injective dimension $n$ and Gorenstein shift $\xi$. The same proof applies to 
show that $A$ is right AS-Gorenstein with the same injective dimension and 
Gorenstein shift, so $A$ is AS-Gorenstein. 

Finally, assume that $\gr A$ is AS-regular. Then $A$ is AS-Gorenstein, and item
\ref{I:pd-id} of Corollary \ref{P:ext-ss} implies that it has finite
left and right global dimensions. These dimensions are equal to the left and 
right projective dimensions of $\k$ as $A$-module \cite{RZ2}*{Lemma 2.1.5}, 
which in turn equal the left and right injective dimensions of $A$ and hence 
coincide, so $A$ is AS-regular.
\end{proof}

\section{Quantum affine toric degenerations}
\label{qatv}
In this section we recall the homological properties of quantum affine toric 
varieties proved in \cite{RZ2}, and give necessary and sufficient conditions 
for a connected $\NN^r$-graded algebra $A$ to have a connected filtration such 
that the associated graded ring is a quantum positive affine toric variety. 

\subsection*{Quantum affine toric varieties}

\paragraph
\label{affine-semigroups}
Recall that an affine semigroup is a finitely generated monoid isomorphic to a 
subsemigroup of $\ZZ^r$ for some $r \in \NN$. An embedding of $S$ is an 
injective semigroup morphism $i: S \hookrightarrow \ZZ^r$ for some $r \in 
\NN$. By definition, every affine semigroup $S$ is commutative and 
cancellative, so it has a group of fractions, which we denote by $G(S)$, and 
the natural map from $S$ to $G(S)$ is injective. The group $G(S)$ is a 
finitely generated and torsion-free commutative group, so there exists a 
natural number $r$ such that $G(S) \cong \ZZ^r$ as groups; we refer to $r$ as 
the \emph{rank} of $S$ and denote it $\rk S$. We say that $i$ is a \emph{full 
embedding} if the image of $S$ generates $\ZZ^r$ as a group, in which case 
$r = \rank S$. Fixing an isomorphism $G(S) \cong \ZZ^{\rank(S)}$ we obtain a 
full embedding of $S$ in an obvious way. 

\paragraph
\label{interior}
Let $S$ be an affine semigroup. Fixing an embedding $i: S \hookrightarrow 
\ZZ^r$ we identify $S$ with its image and see $\ZZ^r$ as a subgroup of $\RR^r$ 
in the obvious way. The \emph{rational cone} generated by $S$ in $\RR^r$ is 
the set 
\[
  \RR_+S 
    = \left\{\sum_{i=1}^n r_i s_i \mid r_i \in \RR_{\geq 0}, 
              s_i \in S, n \in \NN\right\}.
\]
The \emph{relative interior} of $S$ is defined as $\relint S = S \cap 
\RR_+S^\circ$, where $\RR_+S^\circ$ is the topological interior of $\RR_+ S$ 
as a subset of the vector space $\RR S$ endowed with the subspace topology 
induced from $\RR^r$. The relative interior is intrinsic to $S$ and does not 
depend on the chosen embedding \cite{BG}*{Remark 2.6}.

An affine semigroup $S$ of rank $r$ is called \emph{normal} if it verifies the 
following property: given $z \in G(S)$, if there exists $m \in \NN^*$ such 
that $mz \in S$, then $z \in S$. If we identify $S$ with a subset of $\ZZ^r$ 
through a full embedding and consider the real cone $\RR_+ S$ of $S$ inside 
$\RR^r$ in the obvious way, then Gordan's lemma \cite{BH}*{Proposition 6.1.2} 
states that $S$ is normal if and only if $S = \ZZ^r \cap \RR_+S$.

\begin{Example*}
We now introduce an example that will recur through the rest of this section.
Recall that a lattice is a poset $\L$ such that any two elements $x,y$ have
an infimum, called the \emph{meet} of $x$ and $y$ and denoted $x \meet y$,
and a supremum, called the \emph{join} of $x$ and $y$ and denoted $x \join y$.
For example, given $u \in \NN$ the poset $\NN^u$ with the product order is a 
lattice, with join (resp. meet) given by taking the maximum (resp. minimum) at 
each coordinate. A lattice is said to be \emph{finite} if its underlying set 
is finite, and \emph{distributive} if the binary operation $\meet$ is 
distributive over $\join$, and vice versa. Clearly $\NN^u$ is a distributive 
lattice, although of course not a finite one. Given $u,v \in \NN$ with $u 
\leq v$, the set $\Pi_{u,v}$ consisting of all $(a_1, \ldots a_u) \in \NN^u$ 
such that $1 \leq a_1 < \cdots <a_u \leq v$ is a finite distributive 
sublattice of $\NN^u$.

An element $z$ of a distributive lattice is said to be \emph{join-irreducible} 
if it is not the minimum, and whenever $z = x \join y$ then $z=x$ or $z=y$.  
Let $\L$ be a distributive lattice, and let $J(\L) = \{x_1, \ldots, x_r\}$ be 
the set of its join irreducible elements, enumerated so that $x_i < x_j$ 
implies $i<j$. Following \cite{RZ2}*{subsection 3.3} set $\str(\L)$ to be the 
subset of $\NN^{r+1}$ consisting of tuples $(a_0, a_1, \ldots, a_r)$ such that 
$a_0 \geq a_i$ for all $i = 1, \ldots, r$, and $a_i \geq a_j$ whenever $x_i < 
x_j$ as elements of $\L$. It turns out that $\str(\L)$ is a normal affine
semigroup, which we will call the affine semigroup associate to $\L$. Later on 
we will give an equivalent definition of $S(\L)$ in terms of the lattice, 
which will justify its name. Below we give an example with $\L = \Pi_{2,4}$.

\begin{tikzpicture}
\node (12) at (0.5,0) {$(1,2)$};
\node (13) at (1.5,1) {\fbox{$(1,3)$}};
\node (14) at (0,2) {\fbox{$(1,4)$}};
\node (23) at (3,2) {\fbox{$(2,3)$}};
\node (24) at (1.5,3) {$(2,4)$};
\node (34) at (3,4) {\fbox{$(3,4)$}};

\node (J) at (9,3.5) {$J(\Pi_{2,4}) = \{(1,3),(1,4),(2,3),(3,4)\}$};

\node (text) at (9,1.5) {$\str(\Pi_{2,4}) = \left\{(a,b,c,d,e) \in \NN^5 
 \  \Bigg\lvert 
  \begin{array}{rl}
  a &\geq b,c,d,e; \\
  b &\geq c,d,e; \\
  c,d &\geq e
  \end{array}\right\}$};

\node (caption) at (6,-1) {The lattice $\Pi_{2,4}$ and its affine semigroup. 
Framed elements are join-irreducibles.};
\draw (12) -- (13) -- (14) -- (24) -- (34) (13) -- (23) -- (24);
\end{tikzpicture}
\end{Example*}
\color{black}

\paragraph
\label{hilbert-basis}
Let $S$ be an affine semigroup. An element $s \in S$ is called 
\emph{irreducible} if whenever $s = x + y$ with $x,y \in S$, then either $x$ 
or $y$ is invertible. On the other hand $S$ is called \emph{positive} if its 
only invertible element is $0$. If $S$ is positive then the set of its 
irreducible elements is finite and generates $S$; for this reason it is called 
the \emph{Hilbert basis} of $S$. The fact that $S$ is positive also implies 
that there exists a full embedding $S \hookrightarrow \ZZ^r$ such that the 
image of $S$ is contained in $\NN^r$. It follows that a semigroup is positive 
if and only if there exists $r \geq 0$ such that $S$ is isomorphic to a 
finitely generated subsemigroup of $\NN^r$. Proofs of these results can be 
found in \cite{BG}*{pp. 54--56}. 

\paragraph
\label{presentation}
Let $S$ be a positive affine semigroup and let $\{s_1, \ldots, s_n\}$ be its 
Hilbert Basis. We denote by $\pi: \NN^n \to S$ the semigroup morphism defined 
by the assignation $e_i \mapsto s_i$ for each $1 \leq i \leq n$, where $e_i$ 
is the $i$-th element in the canonical basis of $\NN^n$. This map determines 
an equivalence relation in $\NN^n$ where $p \sim p'$ if and only if $\pi(p) = 
\pi(p')$; we denote this relation by $L(\pi)$. Clearly $L(\pi)$ is compatible 
with the additive structure of $\NN^n$, and hence the quotient $\NN^n / 
L(\pi)$ is a commutative monoid with the operation induced by addition in 
$\NN^n$, and there is an isomorphism of monoids $\NN^n/L(\pi) \cong S$. 

In general, an equivalence relation on $\NN^n$ closed under addition is called 
a \emph{congruence}. If $\rho$ is a subset of $\NN^n \times \NN^n$ then the 
congruence generated by $\rho$ is the smallest congruence containing the set 
$\rho$. Redei's theorem \cite{RGS}*{Theorem 5.12} states that every congruence 
in $\NN^n$ is finitely generated, i.e. there exists a finite set $\rho 
\subset \NN^n \times \NN^n$ that generates it; in particular there exists a 
finite set $P =  \{(p_1, p'_1), \ldots, (p_m,p'_m) \} \subset L(\pi)$ which 
generates $L(\pi)$. A \emph{presentation} of $S$ will be for us a pair 
$(\pi, P)$, where $\pi: \NN^n \to S$ is the map described above and $P$ is a 
finite generating set of the congruence $L(\pi)$. By the previous discussion 
every positive affine semigroup has a presentation.

\begin{Example*}
Let $\L$ be a finite distributive lattice, and let $F(\L)$ be the free 
commutative monoid on $\L$, which is clearly isomorphic to $\NN^{|\L|}$.
We will give a presentation of the group $\str(\L)$ introduced in 
\ref{interior} as a quotient of $F(\L)$.

As before, we denote by $\{x_1, \ldots, x_r\}$ the set of join-irreducible 
elements of $\L$. We denote by $[l]$ the image of an element $l \in \L$ in 
$F(\L)$ and define a map $\pi: F(\L) \to \str(\L)$ given by $\pi([l]) = e_0 + 
\sum_{x_i \leq l} e_i$. As shown in \cite{RZ2}*{Proposition 3.3.3} this map is 
surjective. Furthermore $L(\pi)$ is the equivalence relation generated by
the set $\{([l] + [l'], [l \join l'] + [l \meet l']) \mid l,l' \in \L\}$. 
Notice also that the Hilbert basis of $\str(\L)$ is precisely the image of 
$\L$, and the map $\pi$ restricted to $\L$ is a lattice morphism. Thus every 
distributive lattice $\L$ can be realized as a sublattice of 
$\{0,1\}^{|J(\L)|+1}$. 

\begin{tikzpicture}
\node (12) at (0.5,0) {$(1,0,0,0,0)$};
\node (13) at (1.5,1) {\fbox{$(1,1,0,0,0)$}};
\node (14) at (0,2) {\fbox{$(1,1,1,0,0)$}};
\node (23) at (3,2) {\fbox{$(1,1,0,1,0)$}};
\node (24) at (1.5,3) {$(1,1,1,1,0)$};
\node (34) at (3,4) {\fbox{$(1,1,1,1,1)$}};

\node (caption) at (2,-1) {The lattice $\Pi_{2,4}$, with each element
replaced by its image through $\pi$.};

\draw (12) -- (13) -- (14) -- (24) -- (34) (13) -- (23) -- (24);
\end{tikzpicture}
\end{Example*}
\color{black}

\paragraph
\label{twisted-semigroup-algebras}
Let $S$ be a commutative semigroup with identity.
A $2$-cocycle over $S$ is a function $\alpha: S \times S \to \k^\times$ such 
that $\alpha (s,s') \alpha(s+s',s'') = \alpha(s,s'+s'')\alpha(s',s'')$ for all 
$s,s',s'' \in S$. Given a $2$-cocycle $\alpha$ over $S$, the 
\emph{$\alpha$-twisted semigroup algebra} $\k^\alpha[S]$ is the associative 
$\k$-algebra whose underlying vector space has basis $\{X^s \mid s \in S\}$ 
and whose product over these generators is given by $X^s X^t = \alpha(s,t)
X^{s+t}$. This is a noncommutative deformation of the classical semigroup 
algebra $\k[S]$. For more details see \cite{RZ2}*{section 3}; in this 
reference the group is assumed to be cancellative, but this is not relevant
for the definition of the twisted algebra.

\begin{Definition*}
Let $A$ be a connected $\NN^r$-graded algebra. We say that $A$ is a 
\emph{quantum positive affine toric variety} if there exist a positive affine 
semigroup $S$ and a $2$-cocycle $\alpha$ over $S$ such that $A$ is isomorphic 
to the twisted semigroup algebra $\k^\alpha[S]$, and for each $s \in S 
\setminus \{0\}$ the element $X^s$ is homogeneous of nonzero degree with 
respect to the $\NN^r$-grading of $\k^\alpha[S]$ induced by this isomorphism. 
In that case we refer to $S$ as the underlying semigroup of $A$.
\end{Definition*}

Let $r \geq 1$ and let $\psi: S \to \NN^r$ be a monoid morphism such that 
$\psi^{-1}(0) = \{0\}$. The twisted semigroup algebra $\k^\alpha[S]$ can be
endowed with a connected $\NN^r$-grading setting $\deg X^s = \psi(s)$ for each
$s \in S$. Conversely, any connected grading such that the elements of the 
form $X^s$ are homogeneous arises in this manner. In particular, if $A$ is a 
quantum positive toric variety with underlying semigroup $S$ then there is a 
corresponding monoid morphism $\psi: S \to \NN^r$, to which we will refer as 
the \emph{grading} morphism. 

\begin{Example*}
Let $q \in \k^\times$ and set $\mathbf{q} = \begin{pmatrix}
1 & q & 1 & 1 & q \\
q^{-1} & 1 & q & q & q^{-2} \\
1 & q^{-1} & 1 & 1 & q \\
1 & q^{-1} & 1 & 1 & q \\
q^{-1} & q^{2} & q^{-1} & q^{-1} & 1
\end{pmatrix}$. 

Let $\k_\q[X]$ be the twisted polynomial ring generated by variables $X_1, 
\ldots, X_5$ subject to the relations $X_i X_j = \q_{i,j} X_j X_i$. 
Given $t = (a,b,c,d,e) \in \NN^5$ we set $X^t = X_1^a X_2^b X_3^c X_4^d X_5^e$.
The formula $X^t X^s = \alpha(s,t) X^{t+s}$ induces a map $\alpha: \NN^5 \times
\NN^5 \to \k^\times$, and associativity of the product of $\k_\q[X]$ implies 
that $\alpha$ is a $2$-cocycle. Recall from the examples in \ref{interior}
and \ref{presentation} that $S = \str(\Pi_{2,4}) \subset \NN^5$ is an affine
semigroup. The restriction of $\alpha$ to $S$ is also a $2$-cocycle, and
$\k^\alpha[S]$ is isomorphic to the subalgebra of $\k_\q[X]$ generated by
the elements $X^t$ with $t$ running over the Hilbert basis of $S$. Setting
\begin{align*}
Y_1 &= X_1 & Y_2 &= X_1X_2 & Y_3 &= X_1 X_2 X_3 \\
Y_4 &= X_1 X_2 X_4 & Y_5 &= X_1 X_2 X_3 X_4 & Y_6 &= X_1 X_2 X_3 X_4 X_5
\end{align*} 
it is routine to check that $Y_j Y_i = q^{-2} Y_i Y_j$ for $(i,j) \in
\{(1,5), (2,4)\}$, that $Y_2$ and $Y_3$ commute, and that $Y_j Y_i
= q^{-1} Y_i Y_j$ for all other possible pairs $i < j$. There is one more
relation among these monomials, namely $Y_2Y_3 = q^{-1} Y_1 Y_4$. Thus 
$\k^\alpha[S]$ has as many generators as $S$, and can be presented by the 
commutation relations between them plus one extra relation arising from the 
fact that there is exactly one pair of incomparable elements in $\Pi_{2,4}$. 
We will see below in Proposition \ref{P:presentation} that all quantum 
positive affine toric varieties and several related algebras have similar 
presentations.
\end{Example*}

\paragraph
\label{properties-of-qatv}
Quantum positive affine toric varieties were studied from the point of view of 
noncommutative geometry in \cite{RZ2}*{section 3}. The following is a summary 
of the results proved there.
\begin{Proposition*}
Let $S$ be a positive affine semigroup and let $A$ be a positive quantum toric 
variety with underlying semigroup $S$. Then the following hold.
\begin{enumerate}[label=(\alph*)]
\item $A$ is noetherian and integral.

\item $A$ has property $\chi$ and finite local dimension equal to the rank of 
$S$.

\item Suppose $S$ is normal. Then $A$ is AS-Cohen-Macaulay and a maximal order 
in its division ring of fractions. Furthermore, $A$ is AS-Gorenstein if and 
only if there exists $s \in S$ such that $\relint S = s + S$.
\end{enumerate}
\end{Proposition*}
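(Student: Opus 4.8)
The Proposition collects the results of \cite{RZ2}*{Section~3}; I indicate how each item is obtained, writing $A \cong \k^\alpha[S]$ with Hilbert basis $s_1,\dots,s_n$ of $S$. For (a), integrality is a leading-term argument: fix an embedding $S \hookrightarrow \ZZ^r$ (say through $G(S) \cong \ZZ^r$) together with a total order on $\ZZ^r$ compatible with addition; each nonzero element of $A$ has a well-defined leading term $c X^s$, and since $X^s X^t = \alpha(s,t) X^{s+t}$ with $\alpha(s,t) \in \k^\times$ the leading term of a product is nonzero, so $A$ has no zero divisors. For noetherianity, observe that the generators $X^{s_i}$ pairwise $q$-commute, with $X^{s_i} X^{s_j} = \alpha(s_i, s_j)\alpha(s_j, s_i)^{-1} X^{s_j} X^{s_i}$; using the minimal presentation $(\pi, P)$ of $S$ from \ref{presentation} one exhibits $A$ as a quotient of a skew polynomial ring in $n$ variables (an iterated Ore extension, hence noetherian by \cite{MR}) by the ideal generated by the binomial relations coming from $P$, whence $A$ is noetherian. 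This is essentially Proposition \ref{P:presentation} below, on which I would rely.

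For (b), the structural point is that $A$ is graded by the group $G(S) \cong \ZZ^{\rk S}$ with every homogeneous component at most one-dimensional, spanned by an $X^s$. Since $\m = \sqrt{(X^{s_1}, \dots, X^{s_n})}$, the local cohomology $H^\bullet_\m(A)$ is computed by the stable Koszul (\v{C}ech) complex on $X^{s_1}, \dots, X^{s_n}$, which in this $G(S)$-graded setting has one-dimensional components whose differentials differ from those of the classical complex for $\k[S]$ only by rescalings through $\alpha$. Hence the cohomology is governed by the same combinatorics as for the ordinary affine semigroup ring $\k[S]$, giving $H^i_\m(A) = 0$ for $i > \rk S$ and $H^{\rk S}_\m(A) \neq 0$; a projective-dimension count over the skew polynomial ring of (a) bounds $\lcd_\m A$, and together these give $\lcd_\m A = \rk S$. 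The same explicit description shows $\GrExt^i_A(\k, A)$ is finite-dimensional for each $i$, i.e. $\chi(A)$ holds; since $\lcd_\m A < \infty$, the Lemma of \ref{invariance-by-grading} promotes this to property $\chi$.

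For (c), assume $S$ normal. Hochster's theorem \cite{BH} says $\k[S]$ is Cohen--Macaulay, so the computation of (b) yields $H^i_\m(A) = 0$ for $i \neq \rk S$; as $A^\opp \cong \k^{\alpha^\opp}[S]$ with $\alpha^\opp(s,t) = \alpha(t,s)$ is again a quantum positive toric variety with the same normal underlying semigroup, the same vanishing holds for $H^i_{\m^\opp}(A)$, so $A$ is AS-Cohen--Macaulay with $n = \rk S$. For the maximal order claim, normality gives $S = G(S) \cap \RR_+S$, an intersection over the facets of the cone of the lattice points of half-spaces; lifting this through the $G(S)$-grading presents $A$ as an intersection, inside its division ring of fractions, of graded overrings each of which is readily seen to be a maximal order, and such an intersection is again a maximal order. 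Finally, the graded canonical module of $A$ is $\omega_A = \bigoplus_{s \in \relint S} \k X^s$, the evident analogue of the Danilov--Stanley description; by graded local duality $\GrExt^{\rk S}_A(\k, A)$ is one-dimensional --- which together with AS-Cohen--Macaulayness makes $A$ left AS-Gorenstein --- if and only if $\omega_A \cong A[\ell]$ for some $\ell$, which in turn holds if and only if $\relint S = s_0 + S$ for some $s_0 \in S$: one implication comes from the graded isomorphism $X^{s_0} A \to \omega_A$, $X^{s_0} X^s \mapsto X^{s_0 + s}$, and the other from the fact that a one-element $A$-module generator of $\omega_A$ realizes $\relint S$ as a translate of $S$. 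The symmetric computation for $A^\opp$ produces the same injective dimension and shift, so $A$ is AS-Gorenstein precisely under this condition.

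Two points are delicate. First, one must check honestly that the twisting cocycle does not change the cohomology: the \v{C}ech and Koszul differentials carry the scalars $\alpha(s,t)$, and one needs that in each $G(S)$-degree the twisted complex becomes, after rescaling its one-dimensional pieces, the untwisted one --- routine, but not automatic. Second, and this I expect to be the real obstacle, the maximal order statement is the only part that is not combinatorics plus local duality: controlling \emph{all} orders between $A$ and its division ring of fractions requires the noncommutative order theory developed in \cite{RZ2}.
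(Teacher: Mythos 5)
The paper's own proof of this Proposition is deliberately short: it simply cites \cite{RZ2}*{section 3.2}, where items (a)--(c) are established in the case where the grading morphism is given by a full embedding of $S$ into $\NN^r$, and then observes that all the properties in the statement are independent of the grading (paragraph \ref{invariance-by-grading}), which reduces an arbitrary grading morphism $\psi$ to that case. Your proposal instead attempts to reconstruct the proofs of the cited results. Much of the reconstruction is sound and close in spirit to \cite{RZ2} (integrality by a leading-term argument, noetherianity by exhibiting $A$ as a quotient of a quantum affine space, comparison with the commutative semigroup ring $\k[S]$ through the cocycle twist, Hochster's theorem plus duality for (c)), but two points are genuine gaps rather than routine checks.

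First, the maximal-order step is wrong as stated: an intersection of maximal orders inside a common division ring is in general \emph{not} a maximal order. What is actually needed is that the overrings be localizations of $A$ (here at the normal elements $X^s$ attached to the facets of the cone) together with a local--global criterion of Maury--Raynaud type; this is exactly the noncommutative order theory of \cite{RZ2} that you concede you would have to import, so this part of your argument is a citation, not a proof. Second, you bypass the one piece of mathematics that the paper's proof does contain: your \v{C}ech and duality computations are carried out in the $G(S)$-grading (equivalently, the full-embedding grading), whereas property $\chi$, local dimension, AS-Cohen-Macaulay and AS-Gorenstein are defined with respect to the given connected $\NN^r$-grading of $A$. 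Transferring them requires the change-of-grading results recorded in \ref{invariance-by-grading} and \cite{RZ2}*{Propositions 1.3.7--1.3.9, Remark 2.1.7} for \emph{all} of these properties, not only for property $\chi$, which is the single place where you invoke that Lemma. With these two repairs (and the cocycle-rescaling consistency check you already flag, which is precisely the $2$-cocycle condition and does go through), your outline would recover the content of the cited section, but as written it neither follows the paper's route nor closes the two gaps above.
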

\begin{proof}
This is proved in \cite{RZ2}*{section 3.2} in the case where the grading 
morphism $\psi$ is given by a full embedding of $S$ in $\NN^r$ for some $r 
\geq 0$. The proposition follows from the fact that all the properties 
mentioned in it are independent of the grading, see paragraph 
\ref{invariance-by-grading}.
\end{proof}

\subsection*{Algebras with a quantum toric degeneration}
Classically, a toric degeneration of an algebraic variety $V$ is a flat 
deformation of $V$ into a toric variety $T$. Since varieties inherit many good 
properties from their flat deformations, and positive affine toric varieties 
are well studied, toric degeneration is a standard method to study algebraic 
varieties. With this in mind we introduce a noncommutative analogue of toric 
degeneration.  

\begin{Definition}
Let $A$ be a connected $\NN^r$-graded algebra. We will say that 
$A$ has a \emph{quantum positive affine toric degeneration} if it has a 
connected filtration such that its associated graded ring is a quantum 
positive affine toric variety. We refer to the underlying semigroup of this 
quantum positive affine toric variety as the semigroup associated to the 
degeneration.
\end{Definition}
For the sake of brevity we will write ``quantum toric degeneration'', omitting 
the adjectives ``positive'' and ``affine''. In view of Theorem \ref{transfer} 
and Proposition \ref{twisted-semigroup-algebras}, an algebra with a quantum 
toric degeneration is noetherian, integral, has property $\chi$ and finite 
local dimension. Furthermore, we can determine whether $A$ is 
AS-Cohen-Macaulay or AS-Gorenstein by studying the semigroup associated to the 
degeneration.

\paragraph
\label{s-phi-type}
A standard technique for proving toric degeneration of a variety is to find 
the structure of a Hodge algebra in the coordinate ring of the variety. We now 
introduce a noncommutative notion, inspired in the definition of Hodge 
algebras and its descendants such as classical and quantum algebras with a 
straightening law, which will play a similar role for the rest of this article.

\begin{Definition*}
Let $A$ be a noetherian connected $\NN^r$-graded algebra. Let $S$ be a 
positive affine semigroup and let $(\pi, P)$ be a presentation of $S$ as 
defined in \ref{presentation}, with $P = \{(p_i,p'_i) \mid 1 \leq i \leq m\}$. 
Let $\phi:~S~\to~\NN$ be a semigroup morphism such that $\phi^{-1}(0) = 
\{0\}$, and set $\tphi = \phi \circ \pi$.  

We say that the algebra $A$ is of $(S, \phi)$-type with respect to $(\pi, P)$
if the following hold. 
\begin{enumerate}
\item \label{generation}
$A$ is generated as algebra by a finite set of homogeneous elements $\{b_1, 
\ldots, b_n\}$ of the same cardinality as the Hilbert basis of $S$. We set
$\b^\xi = \displaystyle \prod_{i = 1}^n b_i^{\xi_i}$ for each $\xi \in \NN^n$.

\item \label{commutation}
For each $1 \leq i < j \leq n$ and each $\xi \in \NN^n$ such that $\tphi(\xi) 
< \phi(s_i + s_j)$, there exist $c_{i,j} \in \k^\times$ and $c^{i,j}_\xi \in 
\k$ such that
\begin{align*}
  b_j b_i = c_{i,j} b_i b_j + \sum_{\tphi(\xi) < \phi(s_i + s_j)} c^{i,j}_\xi
    \b^\xi.
\end{align*}

\item \label{straightening}
For each $1 \leq i \leq m$ and each $\xi \in \NN^n$ such that $\tphi(\xi) <
\tphi(p_i)$ there exist $d_i \in \k^\times$ and $d^i_\xi \in \k$ such that
    \begin{align*}
      \b^{p'_i} = d_i \b^{p_i} + \sum_{\tphi(\xi) < \tphi(p_i)} d^i_\xi \b^\xi.
    \end{align*}
\end{enumerate}
We say that $A$ is of $(S,\phi)$-type if there exists a presentation of $S$
such that $A$ is of $(S,\phi)$-type with respect to it.
\end{Definition*}

\begin{Remark*}
Since $\tphi(e_i) = \phi(s_i) > 0$ for all $i$, given $l \in \NN$ there exist 
finitely many $\xi \in \NN^n$ such that $\tphi(\xi) \leq l$ and so the sums on 
the right hand side of the formulas displayed in \ref{commutation} and 
\ref{straightening} are finite.
\end{Remark*}

The reader familiar with Hodge algebras will notice that there is a condition
missing in the definition, namely the existence of a set of linearly 
independent monomials on the generators. Thus the trivial algebra is of
$(S,\phi)$-type for any $S$ and $\phi$. This omission will be revised in 
\ref{P:equivalent-qatd}, where the existence of a linearly independent set
will be shown to be equivalent to the existence of a quantum toric
degeneration for the algebra.

\begin{Example*}
As stated above, this definition is inspired by that of quantum graded 
algebras with a straightening law (quantum graded ASL for short) as defined in 
\cite{LR1}*{Definition 1.1.1}. A quantum graded ASL is not necessarily an
algebra of $(S,\phi)$-type, but the results from \cite{RZ}*{section 5} show 
that the quantized coordinate rings of grassmannians and their Richardson 
subvarieties have this structure. Our paradigmatic example is the quantum
grassmannian.

Fix a field $\k$ and let $q \in \k^\times$. The quantum grassmannian 
$\mathcal G_q(2,4)$ is the algebra generated by elements $\{[ij], (i,j) \in 
\Pi_{2,4}\}$, subject to the commutation relations
\begin{align*}
[34][12]&= q^{-2}[12][34]
& [24][13] &= q^{-2}[13][24] + q^{-2}(q-q^{-1})[12][34]
\end{align*}
while $YX = q^{-1} XY$ for any other pair $X < Y$ (here we identify the 
generating set with $\Pi_{2,4}$ in the obvious way). There is also a quantum
Plücker relation, given by $[14][23] = q^{-1}[13][24] - q^{-2}[12][34]$.
Notice that the set of generators can be identified with $\Pi_{2,4}$, and 
inherits the structure of a distributive lattice.

Setting $\mathsf{wt}$ as in \cite{RZ}*{Definition 4.3} we obtain an 
assignation
\begin{align*}
[12] &\mapsto 21 & [13] &\mapsto 12 & [14] \mapsto 3 \\
[23] &\mapsto 11 & [24] &\mapsto 2  & [34] \mapsto 1
\end{align*}
which extends to a semigroup morphism $\phi: \str(\Pi_{2,4}) \to \NN$. It is
now a matter of routine computations to check that $\mathcal G_q(2,4)$ is of
$(\str(\Pi_{2,4}),\phi)$-type.
\end{Example*}
\color{black}

\paragraph
\label{type-filtration}
If $A$ is an algebra of $(S,\phi)$-type we write $F_lA = \langle \b^\xi \mid 
\tphi(\xi) \leq l \rangle$ for each $l \in \NN$. The fact that $\tphi(e_i) > 0$
for all $i$ implies that this is a finite dimensional vector space, and that 
$F_0A = \k$. The following lemma shows that $\F = \{F_lA\}_{l \geq 0}$ is a 
connected filtration on $A$.

\begin{Lemma*}
Let $S$ be a positive affine semigroup. Let $A$ be a noetherian connected 
$\NN^r$-graded algebra, and assume that it is of $(S, \phi)$-type with respect 
to a presentation $(\pi, P)$. Then the following hold.
\begin{enumerate}
\item \label{filtration}
Given $\xi, \nu \in \NN^n$ there exists $c_{\xi, \nu} \in \k^\times$ such that 
$\b^\xi \b^\nu \equiv c_{\xi, \nu}\b^{\xi + \nu} \mod F_{\tphi(\xi + \nu) - 1}
A$. In particular $\F$ is a filtration on $A$.

\item \label{straight}
Given $(\xi, \nu) \in L(\pi)$ there exists $d_{\xi, \nu} \in \k^\times$ such 
that $\b^\xi \equiv d_{\xi, \nu} \b^\nu \mod F_{\tphi(\xi) - 1} A$.
\end{enumerate}
\end{Lemma*}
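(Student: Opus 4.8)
The plan is to prove both items by induction, with part \ref{filtration} being the substantive one and part \ref{straight} following as a short corollary. For part \ref{filtration}, I would first reduce to the case where $\xi = e_j$ is a standard basis vector: if I can show that for every $j$ and every $\nu \in \NN^n$ there is a scalar $c \in \k^\times$ with $b_j \b^\nu \equiv c\,\b^{e_j + \nu} \bmod F_{\tphi(e_j+\nu)-1}A$, then the general statement follows by an induction on $|\xi|$, multiplying on the left one generator at a time and using that $F_\bullet$ respects products once it is known to be a filtration (or, more carefully, tracking the filtration degree by hand since we are in the middle of proving it \emph{is} a filtration). The base case $|\xi| = 0$ is trivial.

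So the heart of the matter is computing $b_j \b^\nu = b_j b_1^{\nu_1}\cdots b_n^{\nu_n}$ and showing it equals a nonzero scalar times $\b^{e_j+\nu} = b_1^{\nu_1}\cdots b_j^{\nu_j+1}\cdots b_n^{\nu_n}$ modulo lower filtration. The idea is to move $b_j$ to the right past $b_1^{\nu_1}, \ldots, b_{j-1}^{\nu_{j-1}}$ using the commutation relations in \ref{commutation}. Each time we swap $b_j$ past some $b_i$ with $i < j$, we pick up a factor $c_{i,j} \in \k^\times$ plus a sum of monomials $\b^\zeta$ with $\tphi(\zeta) < \phi(s_i+s_j) = \tphi(e_i+e_j)$. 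The key bookkeeping point is that $\phi$ is additive, so $\tphi$ is additive on $\NN^n$; hence if we are rewriting a monomial of total $\tphi$-value $\tphi(e_j+\nu)$, every error term produced by one commutation has $\tphi$-value strictly less than $\tphi(e_j + \nu)$, because replacing the factor $b_ib_j$ (contributing $\tphi(e_i+e_j)$ to the total) by $\b^\zeta$ with $\tphi(\zeta) < \tphi(e_i+e_j)$ strictly decreases the total $\tphi$-degree of that monomial, and the remaining factors are untouched. Therefore all error terms land in $F_{\tphi(e_j+\nu)-1}A$, and after all the swaps we are left with $\big(\prod_i c_{i,j}^{\nu_i}\big)\, b_1^{\nu_1}\cdots b_{j-1}^{\nu_{j-1}} b_j^{\nu_j+1} b_{j+1}^{\nu_{j+1}}\cdots = c\,\b^{e_j+\nu}$ modulo $F_{\tphi(e_j+\nu)-1}A$ with $c \ne 0$. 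Once part \ref{filtration} is established, the equality $\b^\xi\b^\nu \equiv c_{\xi,\nu}\b^{\xi+\nu}$ with $c_{\xi,\nu} \ne 0$ immediately gives $F_lA \cdot F_{l'}A \subseteq F_{l+l'}A$, so together with $F_0A = \k$ and the fact (noted in \ref{type-filtration}) that each $F_lA$ is finite dimensional and the filtration is exhaustive, $\F$ is a connected filtration.

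For part \ref{straight}, since $P$ generates the congruence $L(\pi)$, any $(\xi,\nu) \in L(\pi)$ is obtained from the generating pairs $(p_i,p'_i)$ by a chain of elementary moves: adding a common element $\mu \in \NN^n$ to both sides, and transitivity/symmetry. It suffices to check that the relation ``$\b^\xi \equiv d\,\b^\nu \bmod F_{\tphi(\xi)-1}A$ for some $d \in \k^\times$'' is preserved under these moves. For the generating pairs themselves this is exactly hypothesis \ref{straightening} (note $\tphi(p_i) = \tphi(p'_i)$ since $(p_i,p'_i) \in L(\pi)$ and $\phi$ factors through $S$). Symmetry and transitivity are clear. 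For stability under adding $\mu$: if $\b^{p'_i} \equiv d\,\b^{p_i} \bmod F_{\tphi(p_i)-1}A$, multiply both sides by $\b^\mu$ on, say, the right and apply part \ref{filtration} to rewrite $\b^{p'_i}\b^\mu \equiv c'\b^{p'_i+\mu}$, $\b^{p_i}\b^\mu \equiv c\b^{p_i+\mu}$, and to check that the image of $F_{\tphi(p_i)-1}A$ under right multiplication by $\b^\mu$ lands in $F_{\tphi(p_i+\mu)-1}A$; since $\tphi(p_i+\mu) = \tphi(p'_i+\mu)$, this yields $\b^{p'_i+\mu} \equiv d''\b^{p_i+\mu}$ with $d'' \in \k^\times$, as needed. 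The main obstacle is not any single step but the careful $\tphi$-degree bookkeeping in part \ref{filtration}: one must be sure that every error term generated during the straightening-and-commuting process is genuinely of strictly smaller $\tphi$-degree, which ultimately rests on the additivity of $\tphi$ and on the strict inequalities built into conditions \ref{commutation} and \ref{straightening}.
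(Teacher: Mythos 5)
Your proposal is correct in outline and follows essentially the same route as the paper: part (1) by induction, peeling off generators and using the commutation relations plus additivity of $\tphi$ to control error terms, and part (2) by observing that the relation ``$\b^\xi \equiv d\,\b^\nu$ modulo $F_{\tphi(\xi)-1}A$'' is a congruence containing the generating pairs, hence contains all of $L(\pi)$ --- this is exactly the paper's argument for part (2). The one step of part (1) that is not yet justified as written is the assertion ``therefore all error terms land in $F_{\tphi(e_j+\nu)-1}A$'': an error term produced by a swap has the form $\b^{\mu}\,\b^{\zeta}\,\b^{\mu'}$, a product of monomials rather than a standard monomial $\b^\rho$, while $F_lA$ is by definition spanned by standard monomials of $\tphi$-value at most $l$; so membership of such a product in the filtration is itself an instance of part (1) at strictly smaller total $\tphi$-value. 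For this reason the induction cannot be run on $|\xi|$ after reducing to a single left factor; it must be organized on the total value $\tphi(\xi+\nu)$, so that the inductive hypothesis is available for all products of smaller total value --- which is precisely how the paper proceeds (its induction on $\tphi(\xi+\nu)$ peels off the leading generator, straightens the remaining product by the hypothesis, and absorbs the commutation error terms by the hypothesis again). Your parenthetical about ``tracking the filtration degree by hand'' acknowledges this circularity, and the repair is routine, so this is an organizational fix rather than a missing idea.
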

\begin{proof}
To prove item \ref{filtration} we proceed by induction on $\tphi(\xi + \nu)$, 
with the $0$-th step being obvious since $F_0A = \k$ is a subalgebra of $A$. 
Suppose that the result holds for all $l < \tphi(\xi + \nu)$ and let $i$ be 
the least integer such that $\xi_i \neq 0$, so $\b^\xi \b^\nu = b_i (\b^{\xi - 
e_i} \b ^\nu)$. Using the inductive hypothesis we obtain
\begin{align*}
b_i (\b^{\xi - e_i} \b^\nu) 
&= b_i \bigg( c_{\xi- e_i, \nu} \b^{\xi - e_i + \nu} 
  + \sum_{\tphi(\rho) < \tphi(\xi - e_i + \nu)} c_\rho \b^\rho \bigg)\\
&= c_{\xi- e_i, \nu} b_i \b^{\xi - e_i + \nu} 
  + \sum_{\tphi(\rho) < \tphi(\xi - e_i + \nu)} c_\rho b_i \b^\rho
\end{align*}
where the $c_\rho \in \k$ and $c_{\xi - e_i,\nu} \in \k^\times$. The 
inductive  hypothesis also implies each product $b_i \b^\rho$ lies in 
$F_{\tphi(\xi + \nu) -1} A$, so
\[
  b_i \b^{\xi - e_i} \b^\nu 
    \equiv c_{\xi- e_i, \nu} b_i \b^{\xi - e_i + \nu} 
    \mod F_{\tphi(\xi + \nu)-1}A.
\]
Now let $j$ be the least integer such that $\nu_j \neq 0$. If $i \leq j$ then 
$b_i \b^{\xi - e_i + \nu} = \b^{\xi + \nu}$ and we are finished; otherwise, 
using item 2 of Definition \ref{s-phi-type} and a similar argument as before, 
we obtain
\begin{align*}
  b_i \b^{\xi - e_i + \nu} 
    = (b_i b_j)(\b^{\xi - e_i + \nu - e_j}) 
    \equiv c_{j,i} (b_j b_i)(\b^{\xi - e_i + \nu - e_j}) 
    \mod F_{\tphi(\xi + \nu) - 1}A
\end{align*}
The same reasoning applied to the product $b_j(b_i\b^{\xi - e_i + \nu - e_j})$ 
shows that
\begin{align*}
  b_j (b_i \b^{\xi - e_i + \nu - e_j}) 
    &\equiv c_{e_i, \xi - e_i + \nu - e_j} b_j(\b^{\xi  + \nu - e_j}) \mod
  F_{\tphi(\xi + \nu) - 1}A.
\end{align*}
Since $j < i$, the definition of $j$ implies $b_j(\b^{\xi  + \nu - e_j}) = 
\b^{\xi + \nu}$, so the proof of item \ref{filtration} is complete.

We now prove item \ref{straight}. Set
\[
  T = \{(\xi, \nu) \in L(\pi) \mid \b^\xi \equiv c \b^{\nu} \mod
    F_{\tphi(\xi) -1}
    \mbox{ for some } c \in \k^\times\}.
\]
We will show that $L(\pi) \subset T$, which clearly implies the desired result.
By definition $T$ is an equivalence relation, and item \ref{filtration} 
implies it is a congruence on $\NN^n$. By item \ref{straightening} of 
Definition \ref{s-phi-type}, every pair $(p_i, p'_i) \in P$ lies in $T$. Since 
$L(\pi)$ is the smallest congruence containing $P$, we deduce that $L(\pi) 
\subset T$.
\end{proof}

\paragraph
\label{P:equivalent-qatd}
Let $S$ be a positive affine semigroup and let $(\pi, P)$ be a presentation of 
$S$. A \emph{section} of $\pi$ is a function $\tau: S \to \NN^n$ such that 
$\pi \circ \tau = \Id_S$, that is $s = \sum_i \tau(s)_i s_i$ for every $s \in 
S$. If $A$ is an algebra of $(S,\phi)$-type then Lemma \ref{type-filtration} 
implies that for any section $\tau$ of $\pi$ the set $\{\b^{\tau(s)} \mid s 
\in S\}$ spans $A$. We now show that an algebra has a quantum toric 
degeneration with associated semigroup $S$ if and only if it is of 
$(S,\phi)$-type for an adequate morphism $\phi$ and the spanning set 
determined by any section is linearly independent.

\begin{Proposition*}
Let $S$ be a positive affine semigroup, and let $A$ be a noetherian connected 
$\NN^r$-graded algebra. The following statements are equivalent.
\begin{enumerate}
\item \label{has-qat-degeneration}
The algebra $A$ has a quantum toric degeneration with associated semigroup $S$.

\item \label{every-section}
For every presentation $(\pi, P)$ of $S$ there exists a semigroup morphism 
$\phi: S \to \NN$ such that $A$ is of $(S, \phi)$-type with respect to 
$(\pi, P)$, and for every section $\tau: S \to \NN^n$ of $\pi$ the set 
$\{\b^{\tau(s)} \mid s \in S\}$ is linearly independent.

\item \label{some-section}
There exist a presentation $(\pi, P)$, a semigroup morphism $\phi: S \to \NN$ 
and a section $\tau: S \to \NN^n$ of $\pi$ such that $A$ is of 
$(S, \phi)$-type with respect to $(\pi, P)$ and the set $\{\b^{\tau(s)} \mid s 
\in S\}$ is linearly independent.
\end{enumerate}
\end{Proposition*}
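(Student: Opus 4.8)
The plan is to prove the cycle of implications $\ref{has-qat-degeneration} \Rightarrow \ref{every-section} \Rightarrow \ref{some-section} \Rightarrow \ref{has-qat-degeneration}$, the middle implication being trivial since a presentation always admits at least one section.

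For $\ref{has-qat-degeneration} \Rightarrow \ref{every-section}$, suppose $A$ has a connected filtration $\F$ whose associated graded ring is a quantum positive affine toric variety $\k^\alpha[S]$, with grading morphism $\psi: S \to \NN^r$ recording the $\NN^r$-degrees of the generators $X^{s}$ for $s$ in the Hilbert basis of $S$. Fix an arbitrary presentation $(\pi,P)$ of $S$; I will build $\phi$ from $\F$. The key observation is that the filtration degree of a homogeneous element of $A$ gives a map, and because $\gr A$ is the \emph{twisted semigroup algebra}, the nonzero homogeneous components of $\gr A$ are exactly the one-dimensional spaces $\k X^{s}$, so the filtration degree function on the (images of) generators extends to a semigroup morphism $\phi: S \to \NN$ via $\phi(s_i) = \min\{l : b_i \in F_lA\}$, where $b_i \in A$ is any lift of $X^{s_i}$; one checks $\phi^{-1}(0)=\{0\}$ because $X^{s}$ has nonzero $\NN^r$-degree for $s \neq 0$ and $F_0A = \k$. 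Then conditions \ref{generation}--\ref{straightening} of the $(S,\phi)$-type definition are read off: condition \ref{generation} is the generation statement for $A$ and the equality of cardinalities; conditions \ref{commutation} and \ref{straightening} follow because in $\gr A = \k^\alpha[S]$ we have $\overline{X^{s_j}}\,\overline{X^{s_i}} = \alpha(s_j,s_i)\alpha(s_i,s_j)^{-1}\,\overline{X^{s_i}}\,\overline{X^{s_j}}$ and $\overline{X^{p'_i}} = \alpha\text{-scalar}\cdot\overline{X^{p_i}}$ in the appropriate filtration layer, so lifting to $A$ produces exactly the leading term plus lower-order (strictly smaller $\tphi$) corrections. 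Finally, for any section $\tau$ the set $\{\b^{\tau(s)}\}$ is linearly independent because its image under $\gr$ is $\{X^s \mid s \in S\}$ up to nonzero scalars (using item \ref{filtration} of Lemma \ref{type-filtration} to see that $\b^{\tau(s)}$ has $\gr$-leading term a scalar multiple of $X^{s}$ in filtration layer $\tphi(\tau(s)) = \phi(s)$), and this image is a basis of $\gr A$; a standard filtered-to-graded argument then lifts linear independence.

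For $\ref{some-section} \Rightarrow \ref{has-qat-degeneration}$, assume $A$ is of $(S,\phi)$-type with respect to $(\pi,P)$ and that $\{\b^{\tau(s)} \mid s \in S\}$ is linearly independent for some section $\tau$. By Lemma \ref{type-filtration} this set spans $A$, so it is a basis, and the filtration $\F = \{F_lA\}$ from \ref{type-filtration} is a connected filtration (each layer is finite-dimensional, $F_0A = \k$, and it is multiplicative by item \ref{filtration}). It remains to identify $\gr A$ with a twisted semigroup algebra on $S$. The images $\overline{\b^{\tau(s)}}$ form a homogeneous basis of $\gr A$ indexed by $S$, and item \ref{filtration} of Lemma \ref{type-filtration} gives $\overline{\b^{\tau(s)}}\cdot\overline{\b^{\tau(t)}} = c\cdot\overline{\b^{\tau(s)+\tau(t)}}$ for a nonzero scalar $c$; since $(\tau(s)+\tau(t),\tau(s+t)) \in L(\pi)$, item \ref{straight} of the same lemma gives $\overline{\b^{\tau(s)+\tau(t)}} = d\cdot\overline{\b^{\tau(s+t)}}$ with $d \in \k^\times$. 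Writing $X^{s} := \overline{\b^{\tau(s)}}$ we therefore get $X^s X^t = \alpha(s,t) X^{s+t}$ for a function $\alpha: S\times S \to \k^\times$, which is a $2$-cocycle because the product of $\gr A$ is associative. Hence $\gr A \cong \k^\alpha[S]$; the $\NN^{r+1}$-grading restricted suitably shows each $X^s$ with $s \neq 0$ has nonzero degree (the $\NN^r$-part is inherited from the homogeneity of the $b_i$ and the last coordinate is $\phi(s) > 0$), so $\gr A$ is a quantum positive affine toric variety with underlying semigroup $S$, and $A$ has the required degeneration.

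The main obstacle is the bookkeeping in the first implication: extracting a \emph{well-defined} semigroup morphism $\phi$ from the filtration and verifying that conditions \ref{commutation} and \ref{straightening} hold with the correct ``strictly smaller $\tphi$'' error terms — this requires knowing that the relations among the $X^s$ in $\gr A = \k^\alpha[S]$, when lifted to $A$, only involve monomials $\b^\xi$ with $\tphi(\xi)$ strictly below the relevant threshold, which in turn hinges on $\gr A$ being concentrated in the expected filtration degrees (i.e. $\phi(s)$ equals the filtration degree of $X^s$, not merely a lower bound). Establishing this equality is where one must use that $\gr A$ is a \emph{twisted semigroup algebra} — in particular that its homogeneous pieces are one-dimensional spanned by the $X^s$ — rather than an arbitrary graded algebra; the linear-independence hypothesis in \ref{some-section} is precisely what is needed in the reverse direction to guarantee no collapse occurs and the analogous degree equality holds there.
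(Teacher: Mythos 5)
Your overall strategy is the same as the paper's: the same cycle of implications, the same construction of $\phi$ from filtration degrees in the direction \ref{has-qat-degeneration}$\Rightarrow$\ref{every-section}, and your argument for \ref{some-section}$\Rightarrow$\ref{has-qat-degeneration} (monomial filtration from \ref{type-filtration}, linear independence making each layer's spanning set a basis, cocycle from associativity, unit preserved because $\tau(0)=0$) is essentially the paper's proof and is fine. The problem lies in how you close what you yourself call the main obstacle in \ref{has-qat-degeneration}$\Rightarrow$\ref{every-section}. The claim that ``the nonzero homogeneous components of $\gr A$ are exactly the one-dimensional spaces $\k X^s$'' is false in general: the $\NN^{r+1}$-grading of $\gr_\F A\cong\k^\alpha[S]$ comes from a grading morphism $\psi\colon S\to\NN^{r+1}$ which need not be injective, so one component may contain many $X^s$ (already a quantum affine plane with both generators in degree $1$ and filtration level $1$ shows this). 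It is also not what is needed: well-definedness of $\phi$ follows simply because $\phi=\psi_{r+1}$ is the last coordinate of the grading morphism, no injectivity required.

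More seriously, the crucial nonvanishing statement --- that $\phi(s)$ is the exact filtration level, equivalently that $\gr_\F(\b^\xi)$ is a nonzero multiple of $X^{\pi(\xi)}$, i.e.\ $\b^\xi\notin F_{\tphi(\xi)-1}A$ --- is never actually established in your write-up. One-dimensionality is not the right tool, and Lemma \ref{type-filtration} cannot supply it either: its congruences are modulo the monomial filtration and only bound leading terms from above; they do not exclude that $\b^{\tau(s)}$ drops into a strictly lower layer, which is exactly the ``collapse'' you mention. The paper closes this point with a single observation you never invoke: $\k^\alpha[S]$ is an integral (domain) ring, so $(\gr b_1)^{\xi_1}\cdots(\gr b_n)^{\xi_n}\neq 0$ and hence equals $\gr(\b^\xi)$, a nonzero multiple of $X^{\pi(\xi)}$. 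This one fact simultaneously yields the linear independence of $\{\b^{\tau(s)}\}$ (hence also the generation statement in condition \ref{generation}, which you only assert), the identification $F_lA=\langle\b^\xi\mid\tphi(\xi)\leq l\rangle$, and the ``strictly smaller $\tphi$'' form of the error terms in conditions \ref{commutation} and \ref{straightening}. With integrality substituted for the one-dimensionality claim, your argument becomes the paper's proof; as written, the step is a genuine gap.
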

\begin{proof}
We first show that \ref{has-qat-degeneration} implies \ref{every-section}. By 
hypohtesis there exists a filtration by graded subspaces $\F = \{F_lA\}_{l 
\geq 0}$ such that $\gr_\F A \cong \k^\alpha[S]$ as $\NN^{r+1}$-graded 
algebras for some $2$-cocycle $\alpha: S \times S \to \k^\times$, with the 
grading on the twisted semigroup algebra given by a semigroup morphism 
$\psi: S \to \NN^{r+1}$ such that $\psi^{-1}(0) = \{0\}$. We identify 
$\gr_\F A$ with $\k^\alpha[S]$ through this isomorphism to simplify notation. 

Fix a presentation $(\pi, P)$ of $S$. For each $1 \leq i \leq n$ we choose 
homogeneous elements $b_i \in A$ such that $\gr b_i = X^{s_i}$. By definition 
of the product of an associated graded ring, for each $\xi \in \NN^n$ the 
element $\prod_{i = 1}^n (\gr b_i)^{\xi_i}$ equals either $\gr 
\left(\prod_{i = 1}^n b_i^{\xi_i}\right)$ or zero. Since $\k^\alpha[S]$ is an 
integral ring the last possibility cannot occur, so $\gr (\b^\xi)$ equals a 
nonzero multiple of $X^{\pi(\xi)}$. Thus if $\tau: S \to \NN^n$ is a section 
of $\pi$ then for each $s \in S$ there exists a nonzero constant $c_s$ such 
that $\gr b^{\tau(s)} \equiv c_s X^s$, and so the set $\{\gr \b^{\tau(s)} 
\mid s \in S\}$ is a basis of $\k^\alpha[S]$, which implies that 
$\{\b^{\tau(s)} \mid s \in S\}$ is a basis of $A$. This also proves that $A$ 
satisfies item \ref{generation} of Definition \ref{s-phi-type}.

Let $\phi: S \to \NN$ be the additive map given by $s \mapsto \psi(s)_{r+1}$; 
equivalently $\phi(s)$ is the minimal $l$ such that $\b^\xi \in F_l A$ for all 
$\xi \in \pi^{-1}(s)$. In particular $\phi(s_i) > 0$ since $F_0A = \k$. Also 
$F_lA = \vspan{\b^\xi \mid \tphi(\xi) \leq l}$, and since for each $\xi \in 
\NN^n$ there exists a nonzero constant $c_\xi$ such that $\gr b^\xi \equiv 
c_\xi \gr b^{\tau(\pi(\xi))}$ we actually have $F_lA = \vspan{\b^{\tau(s)} 
\mid \phi(s) \leq l}$. Finally, for each $1 \leq i,j \leq n$ and each $1 
\leq k \leq m$ there exist $c_{i,j}, d_k \in \k^\times$ such that
\begin{align*}
  \gr (b_j b_i) &= c_{i,j} \gr (b_i b_j),\\
  \gr \b^{p'_k} &= d_k \gr \b^{p_k}
\end{align*}
hold in $\gr_\F A$, which implies that items \ref{commutation} and 
\ref{straightening} of Definition \ref{s-phi-type} hold in $A$ for the morphism
$\phi$ we have just defined. Thus $A$ is of $(S,\phi)$-type, and we have 
proved \ref{has-qat-degeneration} implies \ref{every-section}.

We said in \ref{presentation} that every positive affine semigroup has a 
presentation so clearly \ref{every-section} implies \ref{some-section}. Let us 
see that \ref{some-section} implies \ref{has-qat-degeneration}. Define the 
filtration $\F = \{F_l A\}_{l \geq 0}$ as in \ref{type-filtration}. By item 2 
of Lemma \ref{type-filtration}, the set $\{\b^{\tau(s)} \mid \phi(s) \leq l\}$ 
generates $F_lA$ for each $l \in \NN$, and since by hypothesis it is linearly 
independent, it is a basis of $F_lA$. Hence $\gr_\F A$ is generated by $\{\gr 
\b^{\tau(s)} \mid s \in S\}$. Once again by Lemma \ref{type-filtration} for 
each $s,s' \in S$ there exist $\beta(s,s'), \alpha(s,s') \in \k^\times$ such 
that
\begin{align*}
(\gr \b^{\tau(s)})(\gr \b^{\tau(s')}) 
  = \beta(s,s') \gr \b^{\tau(s) + \tau(s')} 
  = \alpha(s,s') \gr \b^{\tau(s+s')}.
\end{align*}
Associativity of the product of $\gr_\F A$ implies that $\alpha: S \times S \to
\k^\times$ is a $2$-coycle, so we may consider the $\k$-linear map 
$\k^\alpha[S] \to \gr_\F A$ induced by the assignation $X^s \mapsto \gr 
\b^{\tau(s)}$, which is a multiplicative map. Since $S$ is positive we must 
have $\tau(0) = 0$, and hence $\alpha(s,0) = \alpha(0,s) = 1$ for all $s \in S$
which implies that our multiplicative map is unitaty and hence an isomorphism 
of $\k$-algebras. Furthermore, the elements $\gr \b^{\tau(s)}$ are
homogeneous, so this algebra is indeed a quantum positive affine toric variety.
\end{proof}

\begin{Example*}
We return one last time to the example of the quantum grassmannian 
$\mathcal G_q(2,4)$ over an arbitrary field discussed in \ref{s-phi-type}.
We extend the order of $\Pi_{2,4}$ to a total order so we can identify 
the free abelian semigroup over $\Pi_{2,4}$ with $\NN^6$, and let $\pi: \NN^6
\to \str(\Pi_{2,4})$ be the presentation morphism described in 
\ref{presentation}. For each $s \in \str(\Pi_{2,4})$ the fiber $\pi^{-1}(s)$
is finite and has a unique maximal element with respect to the total 
lexicographic order of $\NN^6$, which we denote by $\tau(s)$. The monomials 
corresponding to this section are precisely the standard monomials introduced 
in \cite{LR1}*{Definition 3.2.1}. Using the map $\phi$ from \ref{s-phi-type} 
the quantum toric variety obtained by degeneration is the twisted semigroup 
algebra presented in \ref{twisted-semigroup-algebras} (this explains the 
rather odd choice of commutation coefficients in that example).

A similar argument holds not just for quantum grassmannians, but for the large 
class of quantum graded ASL satisfying condition $(C)$ introduced in 
\cite{RZ}*{Definition 4.1}. This includes all quantum grassmannians in type
$A$, along with their Schubert and Richardson subvariaties. In the following
section we will show that quantized coordinate rings of Schubert subvarieties 
of arbitrary flag varieties also have a quantum toric degeneration.
\end{Example*}
\color{black}

\paragraph
\label{lex-degeneration}
We now introduce a second notion related to quantum toric degenerations. 
Recall that a commutative semigroup $S$ is said to be well-ordered if there 
exists a well-order $<$ on $S$ compatible with the additive structure, i.e. 
such that for all $s,s',s'' \in S$ the inequality $s<s'$ implies $s+s'' < 
s'+s''$. 

\begin{Definition*}
Let $S$ be a commutative semigroup, and let $<$ be a well-order on $S$ 
compatible with the semigroup structure. Let $A$ be a connected $\NN^l$-graded 
algebra for some $l \in \NN$. An \emph{$(S,<)$-basis} for $A$ is an ordered 
basis $\{b_s \mid s \in S\}$ consisting of homogeneous elements, such that for 
all $s,s',s'' \in S$ with $s'' < s+s'$ there exist $c_{s,s'} \in \k^\times$ 
and $c_{s,s'}^{s''} \in \k$ such that
\[
  b_s b_{s'} = c_{s,s'} b_{s+s'} + \sum_{s'' < s+s'} c_{s,s'}^{s''} b_{s''}.
\]
\end{Definition*}

\paragraph
\label{S-order-bases}
Let $S$ be a commutative semigroup with neural element $0$. Assume $<$ is a 
well-order on $S$ compatible with its additive structure, and let $A$ be an 
algebra. An $(S,<)$-filtration on $A$ is a collection of vector spaces $\F = 
\{F_sA \mid s \in S\}$, such that $F_sA \cdot F_{s'} A \subset F_{s+s'}A$ for 
all $s,s' \in S$, and such that $F_sA \subseteq F_{s'}A$ whenever $s \leq s'$. 
The standard notions related to $\NN$-filtrations translate easily to the 
context of $(S,<)$-filtrations. We will assume that our filtrations are always 
\emph{exhaustive}, so $A = \bigcup_{s \in S} F_s A$, and \emph{discrete},
so $F_{s}A = 0$ for all $s<0$.

Let $A$ be an $S$-filtered algebra. The associated graded algebra 
$\gr_\F A$ is defined setting $F_{<s}A = \sum_{t<s} F_tA$ and taking
\[
  \gr_\F A = \bigoplus_{s \in S} \frac{F_sA}{F_{<s}A}.
\] 
As usual, for each element $a \in A$ we may define $\gr a$ as the image of $a$ 
in the quotient $F_sA / F_{<s}A$ where $s$ is the first element of $S$ such 
that $a \in F_sA$; notice that this element exists because $<$ is a 
well-order. The product can then be defined as in the $\NN$-filtered case, 
namely if $a,b \in A$ and $s,t$ are minimal elements such that $a \in F_sA$ 
and $b \in F_tA$, then $(\gr a)(\gr b)$ equals the image of $ab$ in $F_{s+t}A 
/ F_{<s+t}A$, which equals $\gr(ab)$ if $t+s$ is minimal with respect to the 
property that $ab \in F_{t+s}A$ and zero otherwise.

\begin{Lemma*}
Let $S$ be a positive affine semigroup and let $<$ be a well-order of $S$
compatible with the semigroup structure. 
Let $A$ be a noetherian $\NN^l$-graded connected algebra for some $l \in \NN$, 
and assume $A$ has an $(S,<)$-basis $\B = \{b_s \mid s \in S\}$. 
Set $F_sA = \langle b_t \mid t \leq s \in S \rangle$ and $\F = \{F_sA \mid s 
\in S\}$. Also let $\{s_1, \ldots, s_n\}$ be the Hilbert basis of $S$ and set 
$b_i = b_{s_i}$ for all $1 \leq i \leq n$. The following hold.
\begin{enumerate}
\item The family $\F$ is an $(S,<)$-filtration. Furthermore,
each quotient $F_sA / F_{<s}A$ is of dimension $1$ and $F_0A = \langle 1 
\rangle$.

\item There exists a $2$-cocycle $\alpha$ over $S$ such that $\gr_\F A$ is 
isomorphic as $S$-graded algebra to $\k^\alpha[S]$ for the obvious $S$-grading
on this.

\item The algebra $A$ is generated by the set $\{b_i \mid 1 \leq i \leq n\}$.
\end{enumerate} 
\end{Lemma*}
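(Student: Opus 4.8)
The plan is to verify the three items in sequence, extracting most of the work from the definition of an $(S,<)$-basis and the general formalism of $(S,<)$-filtrations recalled in \ref{S-order-bases}.

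\textbf{Item 1.} First I would check that $\F = \{F_sA\}_{s \in S}$ is an $(S,<)$-filtration. Since the $b_s$ form a basis, $F_sA = \langle b_t \mid t \le s\rangle$ is a well-defined subspace, and $F_sA \subseteq F_{s'}A$ whenever $s \le s'$ is immediate from the definition. For the multiplicativity $F_sA \cdot F_{s'}A \subseteq F_{s+s'}A$ it suffices to check it on basis elements $b_t b_{t'}$ with $t \le s$, $t' \le s'$; applying the defining relation of an $(S,<)$-basis, $b_t b_{t'}$ is a linear combination of $b_{t+t'}$ and various $b_{t''}$ with $t'' < t+t'$, and compatibility of $<$ with addition gives $t+t' \le s+s'$ and $t'' < t+t' \le s+s'$, so all these basis vectors lie in $F_{s+s'}A$. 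Exhaustiveness is clear since $\{b_s\}$ is a basis of $A$, and discreteness holds because $0$ is the minimum of $S$ (as $S$ is positive and $<$ is compatible, $0 \le s$ for all $s$). Next, $F_{<s}A = \sum_{t<s}F_tA = \langle b_t \mid t < s\rangle$, so $F_sA/F_{<s}A$ is one-dimensional, spanned by the class of $b_s$; in particular $F_0A = \langle 1 \rangle$ once we observe $b_0$ is a unit, which follows from the defining relation with $s=s'=s''=0$... actually more carefully: taking $s = s'$ arbitrary and noting $b_0 b_s = c_{0,s} b_s + (\text{lower})$, one deduces $b_0$ acts invertibly; alternatively since $\gr_\F A$ will be shown to be $\k^\alpha[S]$ which is unital with unit $X^0$, and $F_0A$ is one-dimensional containing $1$, we get $F_0A = \langle 1\rangle = \langle b_0\rangle$.

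\textbf{Item 2.} The associated graded $\gr_\F A = \bigoplus_s F_sA/F_{<s}A$ has a basis $\{\gr b_s \mid s \in S\}$ by item 1. For $s, s' \in S$, the defining relation shows $b_s b_{s'} \equiv c_{s,s'} b_{s+s'} \bmod F_{<s+s'}A$ with $c_{s,s'} \in \k^\times$, hence $(\gr b_s)(\gr b_{s'}) = c_{s,s'}\, \gr b_{s+s'}$, which is nonzero. Thus the map $X^s \mapsto \gr b_s$ defines a $\k$-linear bijection $\k^{c}[S] \to \gr_\F A$; associativity of the product in $\gr_\F A$ forces $c: S \times S \to \k^\times$ to be a $2$-cocycle, and the map is an algebra isomorphism. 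Since $\tau(0)=0$ forces $b_0$ to be a unit, $c(0,s) = c(s,0) = 1$, so the map is unital. It is $S$-graded by construction.

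\textbf{Item 3.} The Hilbert basis $\{s_1,\dots,s_n\}$ generates $S$, so $X^{s_1},\dots,X^{s_n}$ generate $\k^c[S]$ as an algebra; transporting through the isomorphism of item 2, the elements $\gr b_i = \gr b_{s_i}$ generate $\gr_\F A$. A standard filtered-to-graded argument then shows $\{b_i\}$ generates $A$: given $a \in A$ nonzero, let $s$ be minimal with $a \in F_sA$; since $\gr a$ is a scalar multiple of a product of the $\gr b_i$, there is a noncommutative monomial $w$ in the $b_i$ with $\gr w = \gr a$ (up to scalar), so $a - \lambda w \in F_{<s}A$ for suitable $\lambda$, and by well-foundedness of $<$ (it is a well-order) this process terminates, expressing $a$ as a polynomial in the $b_i$.

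The main obstacle, and the only place requiring genuine care, is item 1: one must be slightly attentive that the $F_sA$ genuinely depend only on $s$ and not on a choice, and that $F_{<s}A$ is correctly identified as $\langle b_t \mid t < s\rangle$ — this uses that $\{b_s\}$ is a \emph{basis}, so the spans behave well — and that the unit of $A$ lands in $F_0A$, which is where one uses positivity of $S$ to locate $0$ as the minimum. Everything else is a direct unwinding of the $(S,<)$-basis relations combined with the already-established formalism of $(S,<)$-filtrations, closely paralleling the $\NN$-filtered case and the proof of Proposition \ref{P:equivalent-qatd}.
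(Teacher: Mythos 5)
Your items 2 and 3, and most of item 1, follow the paper's own route: the filtration and the one-dimensional quotients come straight from the basis, the structure constants $c_{s,s'}$ give the $2$-cocycle via associativity of $\gr_\F A$, and generation by the $b_i$ is a well-founded descent using the fact that products of the $\gr b_i$ are nonzero in $\gr_\F A \cong \k^\alpha[S]$ (the paper phrases this as a minimal counterexample among the $b_s$, you phrase it as termination of a reduction process; these are the same argument).

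The one genuine gap is the claim $F_0A = \langle 1 \rangle$, i.e.\ that $1$ lies in $F_0A$ (equivalently that $b_0$ is a nonzero scalar). None of the three justifications you offer actually proves it: the relation at $s=s'=0$ only gives $b_0^2 = c_{0,0}b_0$, which by itself does not rule out $b_0$ being a nontrivial idempotent-like element; ``$b_0 b_s = c_{0,s}b_s + \text{lower}$'' only shows left multiplication by $b_0$ is injective at the graded level, not that $b_0$ is invertible in $A$, and invertibility alone would still not place $1$ in the span of $b_0$; and the last alternative assumes ``$F_0A$ is one-dimensional \emph{containing} $1$,'' which is exactly what has to be shown. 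Positivity of $S$ only locates $0$ as the minimum; it does not force the unit into $F_0A$. The paper closes this by a leading-term argument: write $1 = \sum_t c_t b_t$, let $s$ be the largest index occurring, and compare top terms in $b_s\cdot 1 = b_s$ (or $1\cdot 1 = 1$), using compatibility of $<$ with addition, to force $s = 0$; alternatively, note that since $0$ is minimal the relation $b_0^2 = c_{0,0}b_0$ has no lower-order terms, and since $b_0$ is homogeneous and $A$ is connected, comparing degrees forces $\deg b_0 = 0$, hence $b_0 \in A_0 = \k$ is a nonzero scalar. This point also props up your unitality claim in item 2, where the appeal to ``$\tau(0)=0$'' is imported from the proof of Proposition \ref{P:equivalent-qatd} and has no meaning here (there is no section $\tau$ in this lemma); after normalizing $b_0 = 1$ as the paper does, $\alpha(0,s)=\alpha(s,0)=1$ and the multiplicative bijection $\k^\alpha[S] \to \gr_\F A$ is a ring isomorphism.
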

\begin{proof}
The fact that $\F$ is an exhaustive and discrete $(S,<)$-filtration is an 
immediate 
consequence of the definition of an $(S,<)$-basis. Also $F_{<s}A = \langle b_t 
\mid t < s\rangle$, so $F_sA / F_{<s}A$ is generated by the image of $b_s$ in 
the quotient, which is nonzero. Finally, writing $1$ as a linear combination 
of the $b_s$ and using a leading term argument, it is easy to see that $1 \in 
F_0A$ and hence it must generate it. Notice that this implies that $b_0$ is a 
scalar so without loss of generality we may assume that $b_0 = 1$. This proves
item $1$.

Set $\alpha(s,s') = c_{s,s'}$ for each $s,s' \in S$. By the first item the set 
$\{\gr b_s \mid s \in S\}$ is a basis of $\gr_\F A$, and $(\gr b_s) 
(\gr b_{s'}) = c_{s,s'} \gr b_{s+s'}$ by definition of the product on the 
associated graded ring. Associativity of the product in $\gr_\F A$ implies 
then that 
$\alpha$ is a $2$-cocycle over $S$, and furthermore the map $\gr_\F A \to 
\k^\alpha[S]$ sending $\gr b_s$ to $X^s$ is a multiplicative $S$-graded 
vector-space isomorphism. Since we are assuming that $b_0 = 1$, it follows 
that $\alpha(1,s) = \alpha(s,1) = 1$ for all $s \in S$ and hence our 
isomorphism preserves the unit, and is thus a ring isomorphism.
This proves item 2.

Finally, in order to prove that the $b_i$'s generate $A$ it is enough to show 
that each $b_s$ is in the algebra generated by these elements. Suppose this is 
not the case. Then, since $S$ is well-ordered by $<$, there exists a minimal 
$s$ such that $b_s$ is not in the algebra generated by the $b_i$'s. Take $\xi 
= (\xi_1, \ldots, \xi_n) \in \NN^n$ such that $\sum_i \xi_i s_i = s$. By the 
definition of the product in the associated graded ring, $\prod_i
(\gr b_i)^{\xi_i}$ equals either $\gr (\b^\xi)$ or zero, and since $\gr_\F A 
\cong \k^\alpha[S]$ is integral (see \cite{RZ2}*{Lemma 3.2.3}) the second possibility can not occur. Thus 
$\gr(\b^\xi)$ is a nonzero element of $\gr_\F A$ of degree $\pi(\xi)$, so item 
1 of this lemma implies that $\gr b_s = c\gr(\b^\xi)$ for some $c \in 
\k^\times$, and hence $b_s  = c\b^\xi + \sum_{t<s} c_t b_t$. By the minimality 
of $s$ all the $b_t$'s appearing in the sum on the right hand side of the 
equation lie in the algebra generated by the $b_i$'s, and clearly so does 
$\b^\xi$, a contradiction. 
\end{proof}

\paragraph
\label{S-ordered-basis-degeneration}
There is an obvious way to obtain well-orderings on positive affine 
semigroups. Since $S$ is a positive affine semigroup it can be embedded in 
$\NN^r$ for some $r \geq 0$ through a monoid morphism $\iota: S \to \NN^r$. 
Now $\NN^r$ is a well-ordered semigroup with the lexicographic order, which is 
compatible with its additive structure, so we may pull-back the lexicographic 
order through $\iota$ and thus obtain a well-order $<^\iota$ over $S$, which 
is also compatible with its additive structure. Notice that in this case $0$ 
is always the minimal element of $S$.

\begin{Proposition*}
Let $S$ be a positive affine semigroup, and let $A$ be a noetherian connected 
$\NN^l$-graded algebra for some $l \in \NN$. The algebra $A$ has a quantum 
affine toric degeneration with underlying semigroup $S$ if and only if there 
exists an embedding $\iota: S \to \NN^t$ such that $A$ has an ordered 
$(S, <^\iota)$-basis.
\end{Proposition*}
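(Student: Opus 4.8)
The plan is to prove the two implications separately; in both directions the bridge is Proposition \ref{P:equivalent-qatd}, which rephrases ``$A$ has a quantum toric degeneration with underlying semigroup $S$'' as ``$A$ is of $(S,\phi)$-type with respect to some presentation $(\pi,P)$ of $S$, for a suitable morphism $\phi$, and the monomials $\b^{\tau(s)}$ attached to a section $\tau$ of $\pi$ are linearly independent''.

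\emph{From a degeneration to an $(S,<^\iota)$-basis.} Suppose $A$ has a quantum toric degeneration with underlying semigroup $S$, and fix any presentation $(\pi,P)$ of $S$ with Hilbert basis $s_1,\dots,s_n$. By Proposition \ref{P:equivalent-qatd} there is a semigroup morphism $\phi: S\to\NN$ with $\phi^{-1}(0)=\{0\}$ such that $A$ is of $(S,\phi)$-type with respect to $(\pi,P)$, and so that for any section $\tau$ the set $\{\b^{\tau(s)}\mid s\in S\}$ is a basis of $A$ consisting of homogeneous elements. Fix such a $\tau$ and put $b_s=\b^{\tau(s)}$; then $b_0=1$. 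Since $S$ is positive it admits an embedding $\iota_0: S\hookrightarrow\NN^{t_0}$ (see \ref{hilbert-basis}), so $\iota=(\phi,\iota_0): S\to\NN^{1+t_0}$ is again an injective semigroup morphism; hence $<^\iota$ is a well-order on $S$ compatible with addition, and by construction $\phi(u)<\phi(v)$ implies $u<^\iota v$. Combining items 1 and 2 of Lemma \ref{type-filtration} we get, for each $s,s'\in S$, a constant $c_{s,s'}\in\k^\times$ with $b_sb_{s'}\equiv c_{s,s'}b_{s+s'}$ modulo $F_{\phi(s+s')-1}A$; and this layer equals $\vspan{b_u\mid\phi(u)<\phi(s+s')}$ (by Lemma \ref{type-filtration} together with the fact that the $\b^{\tau(s)}$ form a basis), hence is contained in $\vspan{b_u\mid u<^\iota s+s'}$. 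This is exactly the defining identity of an $(S,<^\iota)$-basis, so $\{b_s\mid s\in S\}$ is one.

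\emph{From an $(S,<^\iota)$-basis to a degeneration.} Suppose $A$ has an $(S,<^\iota)$-basis $\{b_s\mid s\in S\}$. By Lemma \ref{S-order-bases}, the associated $(S,<^\iota)$-filtration $\F=\{F_sA\}$ satisfies $\gr_\F A\cong\k^\alpha[S]$ --- in particular $\gr_\F A$ is integral --- and $A$ is generated by the elements $b_i:=b_{s_i}$ indexed by the Hilbert basis. Fix a presentation $(\pi,P)$ of $S$ with $P=\{(p_k,p'_k)\mid 1\le k\le m\}$ and a section $\tau$. Using integrality of $\gr_\F A$ one checks that for every $\xi\in\NN^n$ the element $\gr_\F(\b^\xi)$ is a nonzero scalar multiple of the class of $b_{\pi(\xi)}$; equivalently $\b^\xi\equiv c_\xi b_{\pi(\xi)}$ with $c_\xi\in\k^\times$ modulo $F_{<\pi(\xi)}A=\vspan{b_t\mid t<^\iota\pi(\xi)}$. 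Since $<^\iota$ is a well-order and every element subtracted here is a finite combination of the $b_t$, a well-founded recursion yields, for each $s\in S$, a finite expansion $b_s=g_s\b^{\tau(s)}+\sum_{t<^\iota s}g_t\b^{\tau(t)}$ with $g_s\in\k^\times$; in particular $\{\b^{\tau(s)}\mid s\in S\}$ spans $A$.

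Substituting these expansions into the $(S,<^\iota)$-basis identities for the products $b_jb_i$ (with $i<j$) and into the identities expressing $\b^{p_k}$ and $\b^{p'_k}$ as scalar multiples of $b_{\pi(p_k)}$ modulo $<^\iota$-lower terms --- using $\pi(p_k)=\pi(p'_k)$ and that $b_ib_j$, $b_jb_i$ both reduce to a scalar times $b_{s_i+s_j}$ modulo $F_{<s_i+s_j}A$ --- one obtains finitely many relations of the shape demanded in Definition \ref{s-phi-type}, \emph{provided} the morphism $\phi$ in the target satisfies $\phi(t)<\phi(s_i+s_j)$, resp.\ $\phi(t)<\phi(\pi(p_k))$, for the finitely many elements $t$ (all of them strictly $<^\iota$ the corresponding leading element) that occur. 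The main point is to find such a $\phi$: no single semigroup morphism $S\to\NN$ need refine the whole lexicographic order, so one exploits that only finitely many strict inequalities must be arranged. For a large enough integer $M$, the weight vector $w=(M^{t-1},\dots,M,1)\in\NN^t$ orders the finitely many relevant pairs $(\iota(t),\iota(v))$ exactly as the lexicographic order does, so $\phi:=w\circ\iota: S\to\NN$ does the job, with $\phi^{-1}(0)=\{0\}$ because $w$ has strictly positive entries and $\iota$ is injective with $\iota(0)=0$. (Alternatively: apply Gordan's theorem to the finite family formed by the lexicographically positive differences $\iota(v)-\iota(t)$ together with $\iota(s_1),\dots,\iota(s_n)$.) With this $\phi$, $A$ is of $(S,\phi)$-type with respect to $(\pi,P)$, and $\{\b^{\tau(s)}\}$ is linearly independent by a leading-term argument in $\gr_\F A$ (in a dependence relation $\sum_s\lambda_s\b^{\tau(s)}=0$ the $<^\iota$-largest surviving term has nonzero image in the one-dimensional quotient $F_sA/F_{<s}A$). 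By the equivalence of statements \ref{has-qat-degeneration} and \ref{some-section} in Proposition \ref{P:equivalent-qatd}, $A$ then has a quantum toric degeneration with underlying semigroup $S$. The substance of the argument is thus concentrated in the construction of $\phi$; the only other slightly delicate point is the finiteness of the expansions $b_s=\sum_t g_t\b^{\tau(t)}$, which follows from well-foundedness of $<^\iota$ and finiteness of supports, and everything else is routine.
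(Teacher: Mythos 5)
Your proposal is correct and follows essentially the same route as the paper: the forward direction via Proposition \ref{P:equivalent-qatd}, Lemma \ref{type-filtration} and the embedding $s\mapsto(\phi(s),\rho(s))$, and the converse via Lemma \ref{S-order-bases}, a leading-term argument in $\gr_\F A$, and the construction of $\phi$ from a finite set of relevant relations. Your weight vector $(M^{t-1},\dots,M,1)$ is exactly the paper's $(N+1)$-adic expansion trick (with $M=N+1$), so the key step coincides as well.
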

\begin{proof}
Suppose $A$ has a quantum affine toric degeneration with underlying semigroup 
$S$. Then by Proposition \ref{P:equivalent-qatd} there exists a semigroup 
morphism $\phi: S\to \NN$ such that $A$ is of $(S, \phi)$-type, and we may 
choose any section $\tau: S \to \NN^n$ to obtain a basis $\mathcal B = 
\{\b^{\tau(s)} \mid s \in S\}$. Let $\rho: S \to \NN^{t-1}$ be an embedding 
($t>1$) and let $\iota: S \to \NN^t$ be defined as $\iota(s) = (\phi(s), 
\rho(s))$, which is an embedding of $S$ since $\rho$ is an embedding. Write 
$<$ for $<^\iota$, and notice that $\phi(s) < \phi(s')$ implies $s<s'$. By 
Lemma \ref{type-filtration}, for all $s,s' \in S$ and all $s''$ such that 
$\phi(s'') < \phi(s+s')$ there exist $c_{s,s'} \in \k^\times$ and 
$c_{s,s'}^{s''} \in \k$ such that
\[
  \b^{\tau(s)}\b^{\tau(s')} 
    = c_{s,s'}\b^{\tau(s+s')} 
      + \sum_{\phi(s'') < \phi(s+s')} c_{s,s'}^{s''} \b^{\tau(s'')},
\]
which implies $\mathcal B$ is an ordered $S$-basis with respect to $<$.

Now assume $A$ has an ordered $S$-basis with respect to some total order $<$ 
induced by an embedding $\iota: S \to \NN^t$. Since $<$ is the pull-back of 
the lexicographic order through an embedding, we might as well assume $S 
\subset \NN^t$ and that $<$ is the lexicographic order. By the previous lemma, 
we already know that the $b_i$'s generate $A$, so all that is left to do is to 
prove the existence of an additive map $\phi:S \to \NN$ and that the 
desired relations hold.

As before, we denote by $\pi: \NN^n \to S$ the map $(\xi_1, \ldots, \xi_n)
\mapsto \sum_i \xi_i s_i$.
Recall that using the $(S,<)$-filtration $\F$ defined in the previous lemma, 
we proved that $\gr_\F A \cong \k^\alpha[S]$ for some $2$-cocycle $S$. This 
implies that for all $1 \leq i < j \leq n$ and all $\xi \in \NN^n$ such that 
$\pi(\xi) < s_i +s_j$ there exist $c_{i,j} \in \k^\times$ and $c^{i,j}_\xi 
\in \k$ such that
\begin{align*}
b_j b_i 
  = c_{i,j} b_i b_j 
    + \sum_{\pi(\xi) < \pi(s_i + s_j)} c^{i,j}_\xi \b^\xi,
\end{align*}
and for each $1 \leq i \leq m$ and each $\xi \in \NN^n$ such that $\pi(\xi) < 
p_i$ there exist $d_i \in \k^\times$ and $d^i_\xi \in \k$ such that
\begin{align*}
  \b^{p'_i} = d_i \b^{p_i} + \sum_{\pi(\xi) < p_i} d^i_\xi \b^\xi.
\end{align*}
Let $C \subset S$ be the set consisting of the following elements:
\begin{itemize}
\item all $s_i + s_j$ with $1 \leq i<j\leq n$;
\item all $\pi(p_i)$ with $1 \leq i \leq m$;
\item all $\pi(\xi)$ such that $c^{i,j}_\xi \neq 0$ for some $1 \leq i < j 
  \leq n$;
\item and all $\pi(\xi)$ such that $d^i_\xi \neq 0$ for some $1 \leq i \leq m$.
\end{itemize}  
The set $C$ is finite and hence is contained in a cube $[0,N]^t$ for $N$ 
large enough. Set $\phi: \NN^t \to \NN$ to be the semigroup morphism defined 
by $(c_1, \ldots, c_t) \mapsto \sum_{i=1}^t c_i (N+1)^{t-i}$. If $c \in C$ 
then $\phi(c)$ is the unique natural number such that its $N+1$-adic expansion
has $c_i$ as its $i$-th digit. This implies that $\phi$ respects the 
restriction of the lexicographic order to $C$, and thus $A$ is of 
$(S,\phi)$-type. 

Let $\tau: S \to \NN^n$ be any section of $\pi$. The algebra $\gr_\F A$ has a 
natural $S$-grading, and for each $s \in S$ the element $\gr \b^{\tau(s)}$ is 
of degree $s$. As we have already observed, this is a non-zero element so the 
set $\{\gr \b^{\tau(s)} \mid s \in S\}$ is a basis of $\gr_\F A$, which 
implies that $\{\b^{\tau(s)} \mid s \in S\}$ is a basis of $A$. Thus by 
Proposition \ref{P:equivalent-qatd} $A$ has a quantum affine toric 
degeneration with underlying semigroup $S$.
\end{proof}

\begin{Remark*}
The trick of turning the $S$-filtration into an $\NN$-filtration using 
$N+1$-adic expansions is due to Caldero \cite{C}*{Lemma 3.2}. A similar though 
less general version of this idea appears in \cite{GL} and \cite{RZ}. 
\end{Remark*}

\paragraph
\label{P:presentation}
We finish this section with an easy consequence of Lemma 
\ref{type-filtration}. It will not be used in the sequel, but we include it 
for completeness.
\begin{Proposition*}
Let $S$ be a positive affine semigroup and $A$ a noetherian connected $\NN^r$-
graded algebra. If $A$ is of $(S,\phi)$-type for some monoid morphism $\phi: 
S \to \NN$, and there exists some section $\tau$ of $\pi$ such that the set 
$\{\b^{\tau(s)} \mid s \in S\}$ is linearly independent, then the relations 
given in items \ref{commutation} and \ref{straightening} of Definition 
\ref{s-phi-type} give a presentation of $A$.
\end{Proposition*}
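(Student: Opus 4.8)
The plan is to realize the algebra defined by the relations in items \ref{commutation} and \ref{straightening} and prove that the evident surjection onto $A$ is an isomorphism. Concretely, fix a choice of scalars $c_{i,j}, c^{i,j}_\xi, d_k, d^k_\xi$ as in Definition \ref{s-phi-type}, let $F = \k\langle X_1, \dots, X_n\rangle$ be the free algebra, write $X^\xi = X_1^{\xi_1}\cdots X_n^{\xi_n}$ for $\xi \in \NN^n$, and let $I \subseteq F$ be the two-sided ideal generated by the elements $X_j X_i - c_{i,j}X_iX_j - \sum_{\tphi(\xi)<\phi(s_i+s_j)} c^{i,j}_\xi X^\xi$ for $1\le i<j\le n$ and $X^{p'_k} - d_k X^{p_k} - \sum_{\tphi(\xi)<\tphi(p_k)} d^k_\xi X^\xi$ for $1\le k\le m$. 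Put $\bar A = F/I$. Because all of these relations hold in $A$ by hypothesis, there is a surjective algebra morphism $\eta\colon \bar A \to A$ carrying the class of $X_i$ to $b_i$, and the statement to be proved is exactly that $\eta$ is an isomorphism; so it suffices to show $\eta$ is injective.

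The heart of the matter is the claim that, for the section $\tau$ fixed in the statement, the family $\{\overline{X^{\tau(s)}} \mid s\in S\}$ spans $\bar A$ over $\k$. Granting this, injectivity is purely formal: $\eta$ maps this spanning family onto $\{\b^{\tau(s)}\mid s\in S\}$, which is linearly independent in $A$ by hypothesis; hence any linear dependence among the $\overline{X^{\tau(s)}}$ would push forward to one among the $\b^{\tau(s)}$ and so be trivial, meaning $\{\overline{X^{\tau(s)}}\}$ is a basis of $\bar A$ that $\eta$ carries bijectively onto a basis of $A$. Thus $\eta$ is an isomorphism.

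To prove the claim I would replay inside $\bar A$ the reductions already used for $A$. First, every word in the $X_i$ is congruent modulo $I$ to a $\k$-linear combination of the ordered monomials $X^\xi$: this is a double induction, the outer one on the weight $\tphi$ of the multidegree of the word and the inner one on the number of inversions, a single application of a relation from \ref{commutation} removing an inversion at the cost of terms of strictly smaller weight (since $\tphi(\xi)<\phi(s_i+s_j)=\phi(s_i)+\phi(s_j)$). Consequently the subspaces $F_l\bar A := \vspan{\overline{X^\xi}\mid \tphi(\xi)\le l}$ exhaust $\bar A$; and the proof of item \ref{filtration} of Lemma \ref{type-filtration}, which only uses $F_0 = \k$ (valid here since $\bar A$ surjects onto $A$ and hence is nonzero) and the commutation relations, shows that $\F = \{F_l\bar A\}_{l\ge 0}$ is a connected filtration. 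The proof of item \ref{straight} of that lemma then applies verbatim, yielding for every $(\xi,\nu)\in L(\pi)$ a scalar $d_{\xi,\nu}\in\k^\times$ with $\overline{X^\xi}\equiv d_{\xi,\nu}\,\overline{X^\nu}\bmod F_{\tphi(\xi)-1}\bar A$. Finally, exactly as in the spanning argument in the proof of Proposition \ref{P:equivalent-qatd}, an induction on $\tphi(\xi)$ applying this congruence with $\nu=\tau(\pi(\xi))$ expresses each $\overline{X^\xi}$, and hence, by the first reduction, every element of $\bar A$, as a $\k$-combination of the $\overline{X^{\tau(s)}}$; this is the claim.

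The one thing demanding care, and the step I would single out as the main obstacle, is to check that the proofs of Lemma \ref{type-filtration} and of the spanning statement within Proposition \ref{P:equivalent-qatd} really do transfer to $\bar A$, which is a priori neither noetherian nor $\NN^r$-graded: one must verify that those arguments invoke nothing beyond the defining relations \ref{generation}--\ref{straightening} and the nonvanishing of the algebra. Once that bookkeeping is done, the remainder is the formal basis-transfer argument above.
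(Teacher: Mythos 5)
Your proposal is correct and follows essentially the same route as the paper: the paper likewise forms $B/I$ (your $\bar A$) from the free algebra modulo the relations of items \ref{commutation} and \ref{straightening}, observes that it is again of $(S,\phi)$-type so that the monomials $Y^{\tau(s)}$ span it by Lemma \ref{type-filtration}, and concludes that the graded surjection $B/I \to A$ carries a basis to a basis and is therefore an isomorphism. The only difference is one of care: where the paper simply asserts that $B/I$ is ``clearly'' an $(S,\phi)$-algebra, you re-run the reduction and filtration arguments inside $\bar A$ to avoid invoking noetherianity, which is a legitimate (and slightly tidier) way of handling the same step.
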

\begin{proof}
Since $A$ is of $(S,\phi)$-type, it is generated as algebra by homogeneous 
elements $b_1, \ldots, b_n$, and there exist constants $c_{i,j}, c^{i,j}_\xi, 
d_i, d^i_\xi$ such that $A$ complies with Definition \ref{s-phi-type}. 
Furthermore, the relations described in items \ref{commutation} and 
\ref{straightening} of this definition are homogeneous.

Let $B$ be the free algebra generated by $X_1, \ldots, X_n$ and let $I$ be the 
ideal of $B$ generated by the elements
\begin{align*}
  X_j X_i &- c_{i,j} X_i X_j - \sum_{\tphi(\xi) < \phi(s_i + s_j)} c^{i,j}_\xi
  X^\xi, 
    & \mbox{for } 1 \leq i < j \leq n;\\
  X^{p'_i} &- d_i X^{p_i} - \sum_{\tphi(\xi) < \tphi(p_i)} d^i_\xi X^\xi, 
    &\mbox{for } 1 \leq i \leq m;
\end{align*}
where $X^\xi = X_1^{\xi_1} X_2^{\xi_2} \cdots X_n^{\xi_n}$ for each $\xi \in 
\NN^n$. We put an $\NN^r$ grading on $B$ by setting $\deg X_i = \deg b_i$, and 
this induces an $\NN^r$ grading on $B/I$. Since $B$ is a free algebra the 
assignation $X_i \mapsto b_i$ induces a morphism of graded algebras $B \to A$, 
which factors through $B/I$. We thus obtain a morphism of $\NN^r$-graded 
algebras $f: B/I \to A$.

We denote by $Y_i$ the image of $X_i$ in $B/I$. Clearly $B/I$ is an $(S,\phi)$-
algebra, and the algebra map $f: B/I \to A$ sends $Y_i$ to $b_i$ for all $i$. 
Since $f(Y^{\tau(s)}) = \b^{\tau(s)}$, the set $\{Y^{\tau(s)} \mid s \in S\}$ 
is linearly independent and hence a basis of $B/I$. Thus $f$ maps a basis onto 
a basis, so it is an isomorphism.
\end{proof}

\section{Quantum affine toric degeneration of quantum Schubert varieties}
\label{q-flag-schubert}
We apply the results in the previous section to study Schubert varieties 
of quantum flag varieties. We recall the definitions of quantum flag and 
Schubert varieties with some detail in order to establish notation. We then
adapt an argument due to P. Caldero to show that these algebras have 
$(S,<)$-bases for adequate semigroups $S$. The main ingredient in the 
construction of these bases is the canonical or global basis of $U_q^-(\g)$ 
discovered independently by Lusztig and Kashiwara. The semigroup arises out
of the string parametrization of this basis.

\subsection*{Quantum flag and Schubert varieties}

\paragraph
\label{lie-algebra-notation}
Let $\g$ be a complex semisimple Lie algebra. We denote by $\Phi$ the root
system of $\g$ with respect to a fixed Cartan subalgebra, and by $\ZZ\Phi$ its 
root lattice. We also fix a basis $\Pi \subset \Phi$ of positive roots, and 
write $P = \{\varpi_\alpha \mid \alpha \in \Pi\}$ for the set of 
corresponding fundamental weights. We denote by $\Lambda$ the weight lattice 
$\sum_{\alpha \in \Pi} \ZZ \varpi_\alpha$ and by $\Lambda^+$ the set of 
dominant integral weights $\sum_{\alpha \in \Pi} \NN \varpi_\alpha$. 

Let $W$ be the Weyl group of $\g$, and $s_\alpha \in W$ the reflection 
corresponding to $\alpha \in \Pi$. Given an element $w \in W$ we denote 
its length by $\ell(w)$, and set $N$ to be the length of $w_0$, the longest 
element of $W$. A \emph{decomposition} of $w \in W$ is a word on the 
generators $s_\alpha$ that equals $w$ in $W$. The decomposition is 
\emph{reduced} if it is of minimal length, i.e. its length equals $\ell(w)$. 
We denote by $(-,-)$ the standard $W$-invariant pairing between $\ZZ\Phi$ and 
$\Lambda$, and write $\langle \lambda, \alpha\rangle = 
\frac{2(\lambda,\alpha)}{(\alpha,\alpha)}$ for all $\lambda \in \Lambda$ and
$\alpha \in \Phi$, so if $\lambda = \sum_{\alpha \in \Pi} r_\alpha 
\varpi_\alpha$ then $\langle \lambda, \alpha \rangle = r_\alpha$.

\paragraph
\label{quantized-enveloping-algebra}
Fix $q \in \k^\times$. Let $U_q(\g)$ be the quantum enveloping algebra of 
$\g$; this is an algebra generated by elements $F_\alpha, E_\alpha, 
K_\alpha^{\pm 1}$
for $\alpha \in \Pi$, with the relations given in \cite{Jan}*{Definition 4.3}. 
We denote by $U_q^+(\g), U_q^-(\g)$ the subalgebras generated by the 
$E_\alpha$'s and the $F_\alpha$'s, which are respectively called the positive 
and negative parts of $U_q(\g)$ \cite{Jan}*{4.4}. As shown in 
\cite{Jan}*{Proposition 4.11}, $U_q(\g)$ is a Hopf algebra.

If $q$ is not a root of unity and $\mathsf{char} \k > 3$ then by \cite{Jan}*{chapter 5} 
for each $\lambda 
\in \Lambda^+$ there is an irreducible highest-weight representation of 
$U_q(\g)$ of type $\mathbf{1}$, which we denote by $V_q(\lambda)$. Each 
$V_q(\lambda)$ decomposes as the direct sum of weight spaces $\bigoplus_{\mu 
\in \Lambda} V_q(\lambda)_\mu$; the dimensions of the weight spaces are the 
same as the corresponding representation over $\g$, so the Weyl character 
formula holds for these representations, see \cite{Jan}*{5.15}. 

\paragraph
Let $G$ be the simply connected, connected algebraic group with Lie algebra 
$\g$. Since $U_q(\g)$ is a Hopf algebra, its dual $U_q(\g)^*$ is an algebra 
with convolution product induced by the coproduct of $U_q(\g)$. There is a map 
$V_q(\lambda)^* \ot V_q(\lambda) \to U_q(\g)^*$ defined by
sending $\phi \ot v \in V_q(\lambda)^* \ot V_q(\lambda)$ to the linear 
functional $c^\lambda_{\phi, v}$, which assigns to each $u \in U_q(\g)$ the 
scalar $c^\lambda_{\phi, v}(u) = \phi(uv)$. Functionals of the form 
$c^\lambda_{\phi, v}$ are called \emph{matrix coefficients}. The $\k$-linear 
span of the matrix coefficients is a subalgebra of $U_q(\g)^*$ denoted by 
$\O_q[G]$, called the quantized algebra of coordinate functions over the group 
$G$ \cite{Jan}*{7.11}.

\paragraph
\label{q-full-flag-varieties}
Quantum analogues of flag varieties and their Schubert subvarieties were 
introduced by Soibelman in \cite{S} and by Lakshmibai and Reshetikhin in 
\cite{qLR}; we review their definition. 
We assume that $q$ is not a root of unity. Fix a maximal Borel subgroup $B$ of
$G$. The \emph{full flag variety} associated to $G$ is $G/B$. Let 
$C^+_q(\lambda)$ be the vector space of matrix coefficients of the form 
$c^\lambda_{\phi,v_{\lambda}}$ in $U_q(\g)^*$, where $v_\lambda$ is a highest 
weight vector in $V_q(\lambda)$, and set 
\[ 
  \O_q[G/B] = \bigoplus_{\lambda \in \Lambda^+} C^+_q(\lambda) \subset \O_q[G] 
    \subset U_q(\g)^*.  
\] 
This is called the \emph{quantum full flag variety} of $G$. The product of two 
matrix coefficients in $\O_q[G/B]$ is again in $\O_q[G/B]$, and its 
decomposition as a direct sum gives $\O_q[G/B]$ the structure of a 
$\Lambda^+$-graded algebra. 

Let $I$ be a subset of the set of fundamental weights and set $\mathcal J(I) = 
\sum_{\varpi \notin I} \NN \varpi$. Denote by $W_I \subset W$ the subgroup 
generated by the reflections $s_\alpha$ with $\varpi_\alpha \in I$, and for 
each class in $W/W_I$ pick a representative of minimal length. We denote by 
$W^I$ the set of these representatives. Since the Weyl character formula 
holds, for each $w \in W$ and each $\lambda \in \mathcal J(I)$ the vector 
space $V_q(\lambda)_{w\lambda}$ has dimension $1$. The \emph{Demazure module} 
$V_q(\lambda)_{w}$ is the $U_q^+(\g)$-submodule of $V_q(\lambda)$ generated by 
a vector of weight $w\lambda$ in $V_q(\lambda)$.

The set $I$ determines a Lie subalgebra $\mathfrak p \subset \g$, and a 
parabolic subgroup $P_I \subset G$. The variety $G/P_I$ is the corresponding 
generalized flag variety. To these data we associate the $\Lambda^+$-graded 
subalgebra of $\O_q[G/B]$ 
\[ 
  \O_q[G/P_I] = \bigoplus_{\lambda \in \mathcal J(I)} C_q^+(\lambda) 
\] 
called the \emph{quantum partial flag variety associated to $I$}. 

Given vector spaces $V_2 \subset V_1$, we denote by $V_2^\perp$ the set of 
linear functionals over $V_1$ which are zero on $V_2$. For every $w \in W^I$ 
the vector space 
\[
  J_w^I = \bigoplus_{\lambda \in \mathcal J(I)} \vspan{c_{\phi,v_\lambda}^\lambda
  \in C_q^+(\lambda) \mid \phi  \in V_q(\lambda)_w^\perp } \subset \O_q[G/P_I] 
\] 
is an ideal of $\O_q[G/P_I]$ called the \emph{Schubert ideal} associated to 
$w$. The quotient algebra $\O_q[G/P_I]_w = \O_q[G/P_I]/J^I_w$ is called the 
\emph{quantum Schubert variety} associated to $w$.

\subsection*{Degeneration of quantum Schubert varieties}
Our aim is to show that quantum Schubert varieties have quantum affine toric 
degenerations. In order to do so we work for a moment over the field $\QQ(v)$, 
where $v$ is an indeterminate over $\QQ$, and consider the $\QQ(v)$-algebra 
$U = U_v(\g)$.  
We now review Caldero's proof of the existence of an $(S,<_{\lex})$-basis 
of $\O_v[G/B]$, and its natural extension to arbitrary partial flag and 
Schubert varieties \cite{C}. Since Caldero is interested in classical flag 
varieties, he works with a large base field $\CC(q)$ that allows him to 
specialize at $q=1$ and still get algebraic varieties over the complex 
numbers. We give a different version of his argument which works over $\QQ(v)$.

\paragraph
Fix as our base field $\QQ(v)$, and set $U=U_v(\g)$. We denote by 
$U^+$ and $U^-$ the algebras $U^+_v(\g)$ and $U^-_v(\g)$, respectively. 

Let $\A = \ZZ[v, v^{-1}] \subset \QQ(v)$. For each $n \in \NN$ and each 
$\alpha \in \Pi$ we write $v_\alpha$ for $v^{(\alpha, \alpha)/2}$. We also use 
the notation $[n]_\alpha = \frac{v_\alpha^{n} - v_\alpha^{-n}}{v_\alpha - 
v_\alpha^{-1}}$, and $[n]_\alpha ! = [1]_\alpha [2]_\alpha \cdots [n]_\alpha$. 
Finally, we set $F_\alpha^{(n)} = \frac{1}{[n]_\alpha!} F_\alpha^n$ and 
$E_\alpha^{(n)} = \frac{1}{[n]_\alpha!} E_\alpha^n$. 
The algebra $U$ has an $\A$-form which we denote by $U_\A$; it is the 
$\A$-subalgebra of $U$ generated by the elements of the form $F_\alpha^{(n)}, 
E_\alpha^{(n)}, K_\alpha^{\pm 1}$ for all $\alpha \in \Pi$ and all $n \geq 0$ 
\cite{Jan}*{11.1}. The algebra $U^+$, resp. $U^-$,
also has an $\A$-form which we denote by $U_\A^+$, resp. $U_\A^-$; it is 
generated by all the $E_\alpha^{(n)}$, resp. $F^{(n)}_\alpha$, with $n \geq 0$.
These $\A$-forms are compatible with the weight decomposition of $U$. 
By construction $U \cong \QQ(v) \ot_\A U_\A$, and analogous results hold for 
$U_\A^+$ and $U_\A^-$. 

For the rest of this section we fix a nonzero highest weight vector $v_\lambda 
\in V_v(\lambda)$. Setting $V_\A(\lambda) = U^-_\A v_\lambda \subset 
V_v(\lambda)$ we obtain $\A$-forms of $U$-modules. These $\A$-forms are 
compatible with the weight decompositions of the original objects, see 
\cite{Jan}*{chapter 11}. Now let $\lambda$ be a dominant integral weight and 
fix $w \in W$. The $\A$-form of the Demazure module $V_v(\lambda)_{w}$ is 
defined as $V_\A(\lambda)_{w} = U_\A^+ V_\A(\lambda)_{w\lambda}$.

\paragraph
\label{T:A-crystal-basis} 
The algebra $U_\A^-$ has a homogeneous $\A$-basis, called the \emph{canonical}
or \emph{global} basis of $U^-$, discovered independently by Lusztig and 
Kashiwara. Its construction is the subject of \cite{Jan}*{chapters 9 - 11}, 
and we will use the notation from this source to recapitulate some relevant 
facts. 

Set $A_0 \subset \QQ(v)$ to be the ring of rational functions without a pole 
at $0$. For each $\alpha \in \Pi$ define the operators $\tilde E_\alpha,
\tilde F_\alpha: U^- \to U^-$ as in \cite{Jan}*{10.2}, and let $\L(\infty)$ be 
the $A_0$-lattice generated by all elements of the form $\tilde F_{\alpha_1} 
\tilde F_{\alpha_2} \cdots \tilde F_{\alpha_r}(1)$; by definition these are 
weight elements, so setting $\L(\infty)_{-\nu} = \L(\infty) \cap U^-_{-\nu}$ 
for each $\nu \in \ZZ \Phi$ with $\nu \geq 0$, we get $\L(\infty) = 
\bigoplus_{\nu \geq 0} \L(\infty)_{-\nu}$, and furthermore each 
$\L(\infty)_{-\nu}$ is a finitely generated $A_0$-module that generates 
$U^-_\nu$ over $\QQ(v)$. 

Set
\begin{align*}
\B(\infty)_{-\nu} 
  &=  \left\{\tilde F_{\alpha_1} \tilde F_{\alpha_2} \cdots 
    \tilde F_{\alpha_r}(1) + v\L(\infty) \mid \sum_{i} \alpha_{i} = \nu\right\}
    \subset \L(\infty)_{-\nu}/v\L(\infty)_{-\nu},
\end{align*}
and set $\B(\infty) = \bigsqcup_{\nu \geq 0} \B(\infty)_{-\nu}$. Although it is
not obvious, $\B(\infty)_{-\nu}$ is a basis of $\L(\infty)_{-\nu} / 
v \L(\infty)_{-\nu}$ \cite{Jan}*{Proposition 10.11}; this is the \emph{crystal}
basis of $U^-_{-\nu}$ at $v = 0$. It turns out that each 
$b \in \B(\infty)_{-\nu}$ has a unique lift $G(b) \in \L(\infty)_{-\nu} \cap 
U_\A^-$, which is invariant under the action of certain automorphisms 
of $U$ \cite{Jan}*{Theorem 11.10 a)}. The set $G(\B)$ of all $G(b)$ with $b 
\in \B(\infty)$ is the global basis of $U_\A^-$.

Let $w \in W$, and let $w = s_{\alpha_1}\cdots s_{\alpha_r}$ be a reduced 
decomposition of $w$. Set $\B_w(\infty)$ as the set of elements in $\B(\infty)$
of the form $\tilde F_{\alpha_1}^{k_1} \tilde F_{\alpha_2}^{k_2} \cdots \tilde 
F_{\alpha_r}^{k_r}(1) + v\L(\infty)$ with $k_j \geq 0$. This set does not 
depend on the decomposition of $w$ \cite{K1}*{Proposition 3.2.5}.

\begin{Theorem*}
Let $\nu \in \ZZ \Phi$ with $\nu \geq 0$.
\begin{enumerate}[label=(\alph*)]
\item The set $\{G(b) \mid b \in \B(\infty)_{-\nu}\}$ is an $\A$-basis of 
$(U_\A^-)_{- \nu}$.

\item Let $\lambda$ be a dominant integral weight. The set $\{G(b)v_\lambda 
\mid b \in \B(\infty)_{-\nu}\} \setminus \{0\}$ is an $\A$-basis of 
$V_\A(\lambda)_{\lambda - \nu}$. Furthermore, if $G(b)v_\lambda = 
G(b')v_\lambda \neq 0$ then $b = b'$.

\item Let $w \in W$. The set $B_w(\lambda) = \{G(b)v_\lambda \mid b \in 
\B_w(\infty)\} \setminus \{0\}$ is an $\A$-basis of $V_\A(\lambda)_{w}$. 
\end{enumerate}
\end{Theorem*}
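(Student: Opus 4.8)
The plan is to deduce all three statements from the theory recalled in \ref{T:A-crystal-basis}, citing \cite{Jan} and \cite{K1} for the facts about crystal bases that are already ``classical'' at the level of the global basis, and then bootstrapping from (a) to (b) to (c). For part (a), the global basis $G(\B(\infty))$ is by construction a homogeneous $\A$-basis of $U_\A^-$, compatible with the weight decomposition $U_\A^- = \bigoplus_{\nu \geq 0} (U_\A^-)_{-\nu}$; so restricting to a fixed weight $-\nu$ gives exactly the set $\{G(b) \mid b \in \B(\infty)_{-\nu}\}$, which is therefore an $\A$-basis of $(U_\A^-)_{-\nu}$. This is essentially a matter of unwinding definitions together with \cite{Jan}*{Theorem 11.10}.

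For part (b), I would use that $V_\A(\lambda) = U_\A^- v_\lambda$ by definition, so the elements $G(b)v_\lambda$ with $b \in \B(\infty)$ span $V_\A(\lambda)$ over $\A$; intersecting with the weight space $V_\A(\lambda)_{\lambda - \nu}$ picks out those $b \in \B(\infty)_{-\nu}$. The content is that, after discarding the $b$ with $G(b)v_\lambda = 0$, the remaining vectors are $\A$-linearly independent and moreover distinct. This is precisely the statement that the global basis of $U_\A^-$ descends to the global (lower) basis of the integrable module $V_\A(\lambda)$: the compatibility $G(b)v_\lambda \in \{G^\lambda(\bar b), 0\}$ for an induced crystal element $\bar b$, and the fact that $b \mapsto \bar b$ is injective on the set $\{b : G(b)v_\lambda \neq 0\}$, is \cite{Jan}*{Theorem 11.10} together with the discussion of crystal bases of highest weight modules in \cite{Jan}*{chapters 10--11} (see also \cite{K1}). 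So part (b) follows by citing these results and translating them into the notation of the excerpt; the freeness over $\A$ is built into the construction of the $\A$-forms.

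For part (c), the Demazure module is $V_\A(\lambda)_w = U_\A^+ V_\A(\lambda)_{w\lambda}$, and the claim is that $B_w(\lambda) = \{G(b)v_\lambda \mid b \in \B_w(\infty)\} \setminus \{0\}$ is an $\A$-basis of it. I would argue that $\B_w(\infty)$ is exactly the subset of $\B(\infty)$ whose associated Demazure-type elements span $V_\A(\lambda)_w$: this is Kashiwara's theorem that the global basis is compatible with Demazure submodules, i.e. $V_\A(\lambda)_w = \bigoplus_{b \in \B_w(\infty),\, G(b)v_\lambda \neq 0} \A\, G(b)v_\lambda$, see \cite{K1}*{Proposition 3.2.5} and the references therein (and \cite{Jan}*{chapter 14} in the classical case); independence is inherited from part (b) since $B_w(\lambda) \subset \{G(b)v_\lambda \mid b \in \B(\infty)\}\setminus\{0\}$. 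The only genuinely delicate point — and the one I expect to be the main obstacle — is bookkeeping: making sure the $\A$-integral statements hold verbatim (not just over $\QQ(v)$), that the weight-graded pieces are finitely generated free $\A$-modules, and that the independence/distinctness in (b) is used correctly when passing to the Demazure submodule in (c). Since all the underlying crystal-basis machinery is available in \cite{Jan} and \cite{K1} over $\A$, the proof is really a careful citation-and-translation exercise rather than a new argument.
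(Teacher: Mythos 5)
Your proposal is correct and follows essentially the same route as the paper, which simply cites \cite{Jan}*{Theorem 11.10} for items (a) and (b) and \cite{K1}*{Proposition 3.2.5 (vi)} for item (c); your additional remarks about weight compatibility and the Demazure module are just an unpacking of those references. No gap to report.
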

\begin{proof}
The first two items are part of \cite{Jan}*{Theorem 11.10}. The third is 
\cite{K1}*{Proposition 3.2.5 (vi)}.
\end{proof}

\paragraph
\label{q-var-bases}
We now use the global basis to produce bases for quantum Schubert varieties.
For each dominant integral weight $\lambda$ we write $\B(\lambda) = \{b \in 
\B(\infty) \mid G(b)
v_\lambda \neq 0\}$. By the previous theorem the set $\{G(b) v_\lambda \mid b 
\in \B(\lambda)\}$ is a basis of $V_v(\lambda)$, so we can take its dual 
basis. Given $b \in \B(\lambda)$, we denote by $b_\lambda^*$ the unique 
element of $V_v(\lambda)^*$ such that $b^*_\lambda(G(b') v_\lambda) = 
\delta_{b,b'}$ for all $b' \in \B(\lambda)$. Thus to each element $b \in 
\B(\lambda)$ we can associate the matrix coefficient $b_\lambda = 
c^{\lambda}_{b^*_\lambda,v_\lambda} \in C^+_v(\lambda)$, and the set 
$\{b_\lambda \mid b \in \B(\lambda)\}$ is a basis of $C^+_v(\lambda)$. Since 
the quantum flag variety is the direct sum of all these spaces with $\lambda$ 
running over all dominant integral weights, we obtain a basis of $\O_v[G/B]$ 
as defined in \ref{q-full-flag-varieties} by taking
\[
  \flagbasis = \{b_\lambda \mid b \in \B(\lambda), \lambda \in \Lambda^+\}.
\]

If $I$ is a subset of the fundamental weights, then we obtain a basis of the 
partial flag variety $\O_v[G/P_I]$ by taking
\[
  \flagbasis_I = \{b_\lambda \mid b \in \B(\lambda), \lambda \in \mathcal 
    J(I)\}.
\]
Finally, the third item of the previous theorem implies that the ideal $J^I_w$ 
defined in \ref{q-full-flag-varieties} is spanned over $\QQ(v)$ by all 
elements of the form $b_\lambda$ with $b \in \B(\lambda) \setminus 
\B_w(\infty)$. Setting $\B_w(\lambda) = \B_w(\infty) \cap \B(\lambda)$ we
obtain a basis for the quantum Schubert variety $\O_v[G/P_I]_w$ by taking the 
image of
\[
  \schubertbasis_I(w) 
    = \{b_\lambda \mid b \in \B_w(\lambda), \lambda \in \mathcal J(I) \}.
\]
in the quotient.

\paragraph
\label{Littelman-parametrizations}
Recall that the Kashiwara operators $\tilde E_\alpha, \tilde F_\alpha$ 
induce operators $\tilde E_\alpha, \tilde F_\alpha: \B(\infty) \to \B(\infty) 
\cup \{0\}$ \cite{Jan}*{10.12}. By definition the operators $\tilde E_\alpha$ 
are locally 
nilpotent as operators on $U$ \cite{Jan}*{10.2} and hence as operators on 
$\B(\infty)$, so it makes sense to set $e_{\alpha}(b) = \max \{k \in \NN \mid 
\tilde E_\alpha^k(b) \neq 0\}$ for each $b \in \B(\infty)$. We write 
$\overline E_\alpha(b) = \tilde E_{\alpha}^{e_\alpha(b)}(b)$.

To each reduced decomposition of $w \in W$ we can associate a parametrization 
of $\B_w(\infty)$, known as a string parametrization, introduced by Littelmann 
\cite{Lit} and Berenstein and Zelevinsky \cite{BZ}. If $\tilde w = s_{\alpha_1}
\cdots s_{\alpha_r}$ is the chosen decomposition then we define 
$\Lambda_{\tilde w}: \B(\infty) \to \NN^r$ by the formula
\[
  \Lambda_{\tilde w}(b) 
    = (e_{\alpha_1}(b), e_{\alpha_2}(\overline E_{\alpha_1}(b)), \ldots,
    e_{\alpha_r}(\overline E_{\alpha_{r-1}} \cdots \overline E_{\alpha_1}(b))).
\]
If $a = \Lambda_{\tilde b}$ then $\tilde F_{\alpha_1}^{a_1} \cdots
F_{\alpha_N}^{a_N} 1 = b$, so this map is injective. Now according to 
\cite{BZ}*{Proposition 3.5}, the set $S_{\tilde w} = \Lambda_{\tilde w}
(\B_w(\infty))$ is the set of integral points of a convex polyhedral cone, and 
hence by Gordan's lemma a normal affine semigroup. 

A decomposition of $w_0$, the longest word of $W$, is said to be \emph{adapted}
to $w$ if it is of the form $s_{\alpha_1} \cdots s_{\alpha_N}$ with 
$s_{\alpha_1} \cdots s_{\alpha_{\ell(w)}} = w$. For every element $w \in W$ 
there exists a decomposition of the longest word of $W$ adapted to $w$, or in 
other words the longest word of $W$ is the maximum for the weak right Bruhat 
order on $W$, see \cite{BB}*{Proposition 3.1.2}. 

\begin{Definition*}
Set $\Pi = \{\alpha_1, \ldots, \alpha_n\}$ and set $\varpi_i = 
\varpi_{\alpha_i}$. Fix $w \in W$ and fix $\tilde w_0$ a decomposition of 
$w_0$ adapted to $w$. We define
\begin{align*}
\Gamma_{\tilde w_0}:
   FB &\to \NN^{N} \times \NN^n \\
   b_\lambda & \longmapsto \Lambda_{\tilde w_0}(b) \times 
   (\langle \lambda, \alpha_1 \rangle, \ldots, \langle \lambda, \alpha_n 
   \rangle).
\end{align*}
and set $\tilde S_{\tilde w_0} = \GG_{\tilde w_0}(\flagbasis), 
\tilde S_{\tilde w_0}^I = \GG_{\tilde w_0}(\flagbasis_I)$ and 
$\tilde S_{\tilde w_0, \tilde w}^I = \GG_{\tilde w_0}(\schubertbasis_I(w))$.
\end{Definition*}
If $\Gamma_{\tilde w_0}(b_\lambda) = \Gamma_{\tilde w_0}(b'_{\lambda'})$ then 
$\lambda = \lambda'$ and $\Lambda_{\tilde w_0}(b_\lambda) = 
\Lambda_{\tilde w_0}(b'_{\lambda'})$, which implies $b_\lambda=b'_{\lambda'}$. 
Thus $\Gamma_{\tilde w_0}$ is injective and the sets $\tilde S_{\tilde w_0}, 
\tilde S_{\tilde w_0}^I$ and $\tilde S_{\tilde w_0, \tilde w}^I$ parametrize 
bases of the quantized coordinate rings of the full flag variety, the partial 
flag variety associated to the set $I$, and of the Schubert variety associated 
to $I$ and $w$, respectively. 

\begin{Lemma*}
The sets $\tilde S_{\tilde w_0}, \tilde S_{\tilde w_0}^I$ and 
$\tilde S_{\tilde w_0, \tilde w}^I$ are normal affine semigroups.
\end{Lemma*}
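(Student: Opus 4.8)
The plan is to realise each of the three sets as the set of integral points of a rational polyhedral cone and then conclude exactly as was recalled for $S_{\tilde w}$ in \ref{Littelman-parametrizations}, via Gordan's lemma. The key case is $\tilde S_{\tilde w_0}$; the other two are then obtained from it by intersecting with coordinate subspaces. Since $\B(\infty) = \B_{w_0}(\infty)$, the first $N$ coordinates of $\Gamma_{\tilde w_0}(b_\lambda)$ range over $S_{\tilde w_0} = \Lambda_{\tilde w_0}(\B(\infty))$, the integral points of the string cone $\mathcal C_{\tilde w_0} \subset \RR^N$. For a fixed dominant integral weight $\lambda$, Littelmann's description of string polytopes \cite{Lit} identifies $\Lambda_{\tilde w_0}(\B(\lambda))$ with $\mathcal C_{\tilde w_0}(\lambda) \cap \ZZ^N$, where $\mathcal C_{\tilde w_0}(\lambda)$ is cut out of $\mathcal C_{\tilde w_0}$ by finitely many inequalities whose right-hand sides are linear in the integers $\langle \lambda, \alpha_1\rangle, \ldots, \langle \lambda, \alpha_n\rangle$. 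I would therefore let $\tilde{\mathcal C}_{\tilde w_0} \subset \RR^N \times \RR^n$ be the set of pairs $(a,\mu)$ such that $a$ satisfies the defining inequalities of $\mathcal C_{\tilde w_0}$, the pair $(a,\mu)$ satisfies Littelmann's coupling inequalities, and $\mu \in \RR^n$ has nonnegative entries. This is manifestly a rational polyhedral cone. Any integral point $(a,\mu)$ of it has $\mu \in \NN^n$, so $\lambda := \sum_i \mu_i \varpi_i$ is dominant integral and $a \in \mathcal C_{\tilde w_0}(\lambda) \cap \ZZ^N = \Lambda_{\tilde w_0}(\B(\lambda))$, whence $(a,\mu) \in \tilde S_{\tilde w_0}$; the reverse inclusion is immediate from the definitions. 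Thus $\tilde S_{\tilde w_0} = \tilde{\mathcal C}_{\tilde w_0} \cap \ZZ^{N+n}$, and Gordan's lemma shows it is a normal affine semigroup.

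For $\tilde S_{\tilde w_0}^I$ and $\tilde S_{\tilde w_0, \tilde w}^I$ I would pass to rational coordinate subspaces of $\tilde{\mathcal C}_{\tilde w_0}$. Since $\lambda \in \mathcal J(I)$ precisely when $\langle \lambda, \alpha_i\rangle = 0$ for every $i$ with $\varpi_{\alpha_i} \in I$, the set $\tilde S_{\tilde w_0}^I$ is exactly the set of $(a,\mu) \in \tilde S_{\tilde w_0}$ with $\mu_i = 0$ for those $i$. For the Schubert case one uses that $\tilde w_0$ is adapted to $w$: if $b \in \B_w(\infty)$, written as $\tilde F_{\alpha_1}^{k_1}\cdots \tilde F_{\alpha_{\ell(w)}}^{k_{\ell(w)}}(1) + v\L(\infty)$, then its string coordinates $\Lambda_{\tilde w_0}(b)$ recover $(k_1,\ldots,k_{\ell(w)})$ in the first $\ell(w)$ positions and vanish in positions $\ell(w)+1,\ldots,N$; and conversely every element of $S_{\tilde w_0}$ supported on the first $\ell(w)$ coordinates arises from an element of $\B_w(\infty)$. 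Hence $\tilde S_{\tilde w_0, \tilde w}^I$ is the set of $(a,\mu) \in \tilde S_{\tilde w_0}$ with $\mu_i = 0$ for $\varpi_{\alpha_i} \in I$ and $a_j = 0$ for $j > \ell(w)$.

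In both cases we are intersecting $\tilde{\mathcal C}_{\tilde w_0}$ with a rational coordinate subspace $H$. The intersection $\tilde{\mathcal C}_{\tilde w_0} \cap H$ is again a rational polyhedral cone, and its set of integral points is $(\tilde{\mathcal C}_{\tilde w_0} \cap \ZZ^{N+n}) \cap H$, which is exactly the subset of $\tilde S_{\tilde w_0}$ described above. So $\tilde S_{\tilde w_0}^I$ and $\tilde S_{\tilde w_0, \tilde w}^I$ are also sets of integral points of rational polyhedral cones, and Gordan's lemma finishes the proof.

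The main obstacle is the geometric input from Littelmann: the fact that the string polytopes $\mathcal C_{\tilde w_0}(\lambda)$, together with the weight of $\lambda$ recorded in the last $n$ coordinates, are all slices of a single rational polyhedral cone, i.e. that they are governed by finitely many inequalities which are linear jointly in the string coordinates and in $\lambda$, and that the lattice points of $\mathcal C_{\tilde w_0}(\lambda)$ are precisely $\Lambda_{\tilde w_0}(\B(\lambda))$. Granting this, the remainder is the same elementary convex geometry already invoked for $S_{\tilde w}$: intersecting with rational coordinate subspaces and applying Gordan's lemma.
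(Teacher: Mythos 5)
Your proposal is correct and follows essentially the same route as the paper: both describe $\tilde S_{\tilde w_0}$ as the lattice points of the weighted string cone cut out by Littelmann's inequalities (the paper cites \cite{Lit}*{Proposition 1.5} directly), then obtain $\tilde S_{\tilde w_0}^I$ and $\tilde S_{\tilde w_0,\tilde w}^I$ by intersecting with coordinate hyperplanes, using adaptedness of $\tilde w_0$ to $w$, and conclude by Gordan's lemma. The only cosmetic inaccuracy is the claim that the string coordinates of $b \in \B_w(\infty)$ literally recover the chosen exponents $(k_1,\ldots,k_{\ell(w)})$ (they need not, since such expressions are not unique); all that is needed, and true, is that they are supported on the first $\ell(w)$ positions.
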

\begin{proof}
According to \cite{Lit}*{Proposition 1.5} $
\tilde S_{\tilde w_0}$  is the set of all $a \times (r_1, \ldots, r_n) \in 
S_{\tilde w_0} \times \NN^n$ such that
\begin{align*}
a_l 
  &\leq \left\langle \lambda - \sum_{j=1}^{N-l} a_{N-j+1} \alpha_{i_{N-j+1}}, 
  \alpha_{i_l} \right\rangle
& (1 \leq l \leq N)
\end{align*}
where $\lambda = \sum_i r_i \varpi_i$. Thus $\tilde S_{\tilde w_0}$ is the set 
of points of $S_{\tilde w_0} \times \NN^n$ that comply with these 
inequalities, and hence it is also a normal affine semigroup. 

Furthermore, $\tilde S_{\tilde w_0}^I = \Gamma_{\tilde w_0}(\flagbasis_I)$ is 
the intersection of $\tilde S_{\tilde w_0}$ with the hyperplanes defined by 
the equations $x_{N+i} = 0$ for all $i$ such that $\varpi_i \in I$, and hence 
is also a normal affine semigroup. Finally, the fact that the decomposition 
$\tilde w_0$ is adapted to $w$ implies that $\tilde S_{\tilde w_0, 
\tilde w}^I = \Gamma_{\tilde w_0}(\schubertbasis_I(w))$ is the 
intersection of $\tilde S_{\tilde w_0}^I$ with the hyperplanes defined by 
$x_i = 0$ for all $\ell(w) < i \leq N$, and hence it is also a normal affine 
semigroup. 
\end{proof}

\paragraph
\label{coproduct-coefficients}
We have just shown that the bases of quantum flag varieties and quantum 
Schubert varieties defined in \ref{q-var-bases} are parametrized by normal 
semigroups. All that is left to check is that they have the multiplicative 
property of $(S,<_{\lex})$-bases, where $<_{\lex}$ is the lexicographic order 
of $\NN^N \times \NN^n$.

Let $\lambda, \lambda' \in \Lambda^+$ and $b \in \B(\lambda), b' \in
\B(\lambda')$. Recall that $b_\lambda^*$ denotes the element in the dual basis 
of $V_v(\lambda)^*$ as defined in \ref{q-var-bases}. The product $b_\lambda 
b'_{\lambda'}$ is by definition the matrix coefficient corresponding to the 
functional $b_\lambda^* \ot b_{\lambda'}'^*$ and the vector $v_\lambda \ot 
v_{\lambda'}$ over $V(\lambda) \ot V(\lambda')$. Now the $U$-module generated 
by $v_\lambda \ot v_{\lambda'}$ is isomorphic to $V_v(\lambda + \lambda')$, so 
$b_\lambda^* \ot b_{\lambda'}'^*$ naturally induces an element in 
$C_v^+(\lambda + \lambda')$, and the product $b_\lambda b'_{\lambda'}$ is a 
linear combination of matrix coefficients in $C^+_v(\lambda + \lambda')$
\[
  b_\lambda b'_{\lambda'} 
    = \sum_{b'' \in \B(\lambda + \lambda')} c^{b''}_{b,b'} b''_{\lambda + 
    \lambda'}
\]
with $c^{b''}_{b,b'} \in \QQ(v)$. In order to show that $\flagbasis$ is an 
$(\tilde S_{\tilde \omega_0}, <_{\lex})$-basis we must show that 
$c^{b''}_{b,b'} \neq 0$ implies that $\Gamma_{\tilde \omega_0}(b'') \leq 
\Gamma_{\tilde \omega_0}(b) + \Gamma_{\tilde \omega_0}(b')$, and that if
equality holds then $c_{b,b'}^{b''}$ is nonzero. Notice that this would also 
imply that $\flagbasis_I$ is an $(\tilde S^I_{\tilde \omega_0}, <_{\lex})
$-basis and that $\schubertbasis_I(\tilde w)$ is a $(\tilde S^I_{\tilde 
\omega_0, \tilde w}, <_{\lex})$-basis.

By definition of a dual basis the scalar $c_{b,b'}^{b''}$ is the value 
$b_\lambda b'_{\lambda'}(G(b''))$. On the other hand, by the definition of the 
product of matrix coefficients
\[
  b_\lambda b'_{\lambda'}(G(b''))
    = b_\lambda^* \ot b_{\lambda'}'^*(\Delta(G(b'')) \cdot 
      v_\lambda \ot v_{\lambda'}),
\]
so we need to study the coproduct $\Delta(G(b''))$.
It follows from \cite{Jan}*{4.9 (4)} that $\Delta(F_\alpha^{(r)}) = 
\sum_{i+j=r} v_\alpha^{ji} F^{(i)}_\alpha \ot F^{(j)}_\alpha K_\alpha^{-i}$,
where $v_\alpha = v^{(\alpha,\alpha)/2}$. Thus by an argument similar to that
of \cite{Jan}*{Lemma 4.12} we see that 
\begin{align*}
\Delta(U^-_\A)_{-\nu} \subset \bigoplus_{0 \leq \mu \leq \nu} 
  (U^-_\A)_{-\mu} \ot (U^-_\A)_{-(\nu - \mu)} K_{-\mu}
\end{align*}
It follows that $\Delta(G(b''))$ is an $\A$-linear combination of terms 
$G(b_{(1)}) \ot G(b_{(2)}) K_\mu$, with $b_{(1)}, b_{(2)} \in \B(\infty)$, 
and $\mu$ the weight of $b_{(1)}$. Among all these terms there is one 
of the form $d_{b,b'}^{b''} (G(b) \ot G(b') K_{\wt(b)})$ with $d_{b,b'}^{b''} 
\in \A$, and thus $c_{b,b'}^{b''} = v^{(\wt(b), \lambda')} d_{b,b'}^{b''}$. 
Notice that unlike before, the element $d_{b,b'}^{b''}$ is independent of 
$\lambda$ and $\lambda'$.

The problem of showing that $\flagbasis$ is indeed an $(\tilde S_{\tilde w_0}, 
<_{\lex})$-basis thus reduces to showing that if $d_{b,b'}^{b''} 
\neq 0$ then $\Lambda_{\tilde w_0}(b'') \leq_{\lex} \Lambda_{\tilde w_0}(b) + 
\Lambda_{\tilde w_0}(b')$; this also implies that $\flagbasis_I$ and 
$\schubertbasis_I(\tilde w)$ are $(\tilde S_{\tilde w_0}, <_{\lex})$-bases. 
Caldero shows that this is indeed the case in \cite{C}*{Theorem 2.3}, under 
the hypothesis that $q$ is transcendental over $\CC$. We give an alternative 
proof in the following paragraphs.

\paragraph
\label{coproduct-coefficients-convex}
We fix some notation. Given a decomposition of the longest word of $W$
$\tilde w_0 = s_{\alpha_1} \cdots s_{\alpha_N}$, for each $a \in \NN^N$ we 
write $F^{(a)} = F_{\alpha_1}^{(a_1)} \cdots F_{\alpha_N}^{(a_N)}$ and 
$\tilde F^{a} = \tilde F_{\alpha_1}^{a_1} \cdots \tilde F_{\alpha_N}^{a_N}$. 
If $a$ lies in the image of $\Lambda_{\tilde w_0}$ then we write $b^a = 
\Lambda_{\tilde w_0}^{-1}(a)$. 

According to \cite{Lit}*{Proposition 10.3} the monomials $F^{(a)}$ with $a$ in 
the image of $\Lambda_{\tilde \omega_0}$ form a weight basis of $U_\A^-$. 
In fact, if we fix $\nu \in \ZZ\Phi, \nu \geq 0$, the change of basis matrix 
between Littelman's monomial basis and the global basis of $(U_\A^-)_{-\nu}$
is unipotent if we order the bases according to the lexicographic order of the
corresponding $a \in \im \Lambda_{\tilde \omega_0}$. The following lemma 
records this fact. On the other hand, we can consider a monomial of the form 
$F^{(a)}$ with $a$ outside the image of $\Lambda_{\tilde\omega_0}$;
we show that in this case the monomial is a linear combination of monomials
whose exponents are strictly larger than $a$ in the lexicographic order, or 
equivalently, elements $G(b^{a'})$ with $a' >_{\lex} a$.
\begin{Lemma*}
Fix $\tilde w_0 = s_{\alpha_1} \cdots s_{\alpha_N}$ a decomposition of the 
longest word of $W$. Let $a \in \NN^N$, and let $\nu \in \ZZ\Phi, \nu \geq 0$ 
be such that $F^{(a)} \in U^-_{-\nu}$.
\begin{enumerate}[label=(\alph*)]
\item 
\label{pre}
Let $\alpha \in \Pi$ and $r > 0$. Let $b \in \B(\infty)_{-\nu}$. Then
for each $b' \in \B(\infty)_{-\nu - r\alpha}$ with $e_\alpha(b') > r + 
e_\alpha(b)$ there exists $x_{b'} \in \A$ such that
\begin{align*}
F^{(r)}_\alpha G(b) 
  &= \qbinom{r+e_\alpha(b)}{r} G(\tilde F_{\alpha}^r b) 
  	+ \sum_{e_{\alpha}(b') > r + e_\alpha(b)} x_{b'} G(b').
\end{align*}

\item 
\label{in-image}
Suppose $a$ lies in the image of $\Lambda_{\tilde w_0}$. Then for each $a' \in 
\Lambda_{\tilde w_0} (\B(\infty)_{-\nu})$ with $a' >_{\lex} a$ there exist 
$x_{a,a'}, y_{a,a'} \in \A$ such that
\begin{align*}
F^{(a)}
  &= G(b^a) + \sum_{a' >_{\lex} a} x_{a,a'} G(b^{a'}), &
G(b^a) 
  &= F^{(a)} + \sum_{a' >_{\lex} a} y_{a,a'} F^{(a')}.
\end{align*}

\item 
\label{not-in-image}
Suppose $a$ does not lie in the image of $\Lambda_{\tilde w_0}$. Let $b = 
\tilde F^a 1 + v\L(\infty) \in \B(\infty)_{-\nu}$ and let $s = 
\Lambda_{\tilde w_0}(b)$. Then $s >_{\lex} a$, and for each $a'\in 
\Lambda_{\tilde w_0} (\B(\infty)_{-\nu})$ with $a' >_{\lex} a$ there exists 
$z_{a,a'} \in \A$ such that
\begin{align*}
	F^{(a)} &= \sum_{a' >_\lex a} z_{a,a'} G(b^{a'}).
\end{align*}
Furthermore $z_{a,s} \neq 0$.
\end{enumerate}
\end{Lemma*}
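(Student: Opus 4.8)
My plan is to prove (a) first and then deduce (b) and (c) simultaneously by induction on the length of the word.

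For (a) I would pass to the rank-one picture. Fix $\alpha \in \Pi$ and regard $U^-$ as a module over the subalgebra it generates; the relevant input, contained in Kashiwara's construction of the global basis (\cite{Jan}*{Chapter 10}, with $e'_\alpha$ as in \cite{Jan}*{10.2}), is that $e_\alpha(\tilde F_\alpha^r b) = e_\alpha(b) + r$, that $\tilde F_\alpha^r$ is a bijection from $\{b' : e_\alpha(b') = m\}$ onto $\{b' : e_\alpha(b') = m + r\}$ along each $\alpha$-string, and that the global basis is compatible with the $\alpha$-string filtration, so that $F_\alpha^{(r)}$ maps the span of the $G(b')$ with $e_\alpha(b') \geq m$ into the span of those with $e_\alpha(b') \geq m + r$. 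Granting this, the quantum-$\mathfrak{sl}_2$ identity $F_\alpha^{(r)} F_\alpha^{(n)} = \qbinom{n+r}{r} F_\alpha^{(n+r)}$ identifies the map induced on the graded piece at level $e := e_\alpha(b)$ as $\qbinom{e+r}{r}$ times the crystal bijection, whence $F_\alpha^{(r)} G(b) = \qbinom{e+r}{r} G(\tilde F_\alpha^r b) + \sum_{e_\alpha(b') > e + r} x_{b'} G(b')$. That the $x_{b'}$ lie in $\A$ is automatic, since $F_\alpha^{(r)}$ preserves $U_\A^-$ and $\{G(b')\}$ is an $\A$-basis of it. I expect this part to reduce to bookkeeping once the cited facts are in hand.

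For (b) and (c) I would strengthen the statement to hold for an arbitrary reduced word $\tilde w = s_{\beta_1} \cdots s_{\beta_r}$ of an element $w$ — the monomials $F^{(a)}$ then being expanded in the $\A$-basis $\{G(b^{a'}) : a' \in \im\Lambda_{\tilde w}\}$ of $\bigoplus_{c \in \B_w(\infty)} \A\, G(c)$, which they do land in by \cite{K1} and \cite{Lit} — and induct on $r$, the case $r = 0$ being trivial. Write $a = (a_1, \hat a)$ with $\hat a \in \NN^{r-1}$, so $F^{(a)} = F_{\beta_1}^{(a_1)} F^{(\hat a)}$; put $\tilde w' = s_{\beta_2} \cdots s_{\beta_r}$, a reduced word for $s_{\beta_1} w$, and $c = \tilde F^{\hat a} 1 \in \B(\infty)$. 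Applying the inductive hypothesis to $F^{(\hat a)}$ and then part (a) with $\alpha = \beta_1$ to each global basis vector occurring expands $F^{(a)}$, and the content is to control the lexicographic order of the terms. The tools are the identities $\tilde F^a 1 = \tilde F_{\beta_1}^{a_1} c$, $e_{\beta_1}(\tilde F_{\beta_1}^{a_1} b'') = a_1 + e_{\beta_1}(b'')$, $\overline E_{\beta_1}(\tilde F_{\beta_1}^{a_1} b'') = \overline E_{\beta_1}(b'')$, and the recursive shape $\Lambda_{\tilde w}(b) = \big(e_{\beta_1}(b),\, \Lambda_{\tilde w'}(\overline E_{\beta_1}(b))\big)$ of the string map. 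The two structural facts that make the induction run are: (i) $a \in \im\Lambda_{\tilde w}$ if and only if $\hat a \in \im\Lambda_{\tilde w'}$ and $e_{\beta_1}(c) = 0$, in which case the sub-problem falls under case (b) and part (a) supplies the leading coefficient $\qbinom{a_1}{a_1} = 1$; and (ii) in case (c) every term $G(b')$ that occurs has $\Lambda_{\tilde w}(b') \geq_\lex a$ by the bookkeeping, while $\Lambda_{\tilde w}(b') = a$ would exhibit $a$ as a value of $\Lambda_{\tilde w}$, contradicting $a \notin \im\Lambda_{\tilde w}$, so in fact $\Lambda_{\tilde w}(b') >_\lex a$. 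The inequality $\Lambda_{\tilde w}(\tilde F^a 1) \geq_\lex a$, with equality exactly on the image, I would prove by the same peeling, and the second identity in (b) then follows by inverting a unitriangular matrix over $\A$.

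The step I expect to be the main obstacle is showing that the transition matrix is genuinely triangular and not merely block triangular — equivalently that no cancellation kills the coefficient $z_{a,s}$ in (c). The difficulty is that left multiplication by $F_{\beta_1}^{(a_1)}$ shifts the first lexicographic coordinate by $e_{\beta_1}(\,\cdot\,)$, which is not monotone for the lexicographic order governing $F^{(\hat a)}$, so several terms may be driven to the same $\Lambda_{\tilde w}$-parameter. To control this I would filter $(U_\A^-)_{-\nu}$ by the decreasing chain of $\A$-submodules spanned by the $G(b')$ with $e_{\beta_1}(b')$ at least a fixed value: by part (a), $F_{\beta_1}^{(a_1)}$ raises this filtration by $a_1$ and acts on the associated graded pieces as a nonzero quantum binomial times the bijection induced by $\tilde F_{\beta_1}^{a_1}$, hence injectively, so there is no cancellation at the lowest $e_{\beta_1}$-level; iterating this over the remaining coordinates — or, alternatively, appealing directly to Littelmann's description of the monomial basis and its unitriangular relation to the global basis in \cite{Lit}*{Proposition 10.3} and its surroundings — yields $z_{a,s} \neq 0$ and finishes the proof.
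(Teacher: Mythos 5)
Your part~(a) is in substance the paper's own argument: compatibility of the global basis with the $\alpha$-string structure (Jantzen, Lemmas 11.3 and 11.12) together with $F_\alpha^{(r)}F_\alpha^{(e)}=\qbinom{r+e}{r}F_\alpha^{(r+e)}$, so that part is fine. For (b)--(c) you genuinely diverge from the paper, which never leaves the full word $\tilde w_0$: it obtains (b) by quoting Littelmann's Proposition~10.3 (transported from $V_v(\lambda)$ for $\lambda$ large back to $U^-_{-\nu}$, then inverting the unitriangular matrix), and obtains (c) by a descending induction on the position of the first nonzero entry of $a$ followed by a maximal-counterexample argument. Your induction on the length of the word forces a strengthened statement for proper subwords $w<w_0$, and there you assert, with only a vague pointer to \cite{K1} and \cite{Lit}, that $F^{(a)}$ lies in $\bigoplus_{c\in\B_w(\infty)}\A\,G(c)$. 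This Demazure-type compatibility is a nontrivial input, and your own induction does not propagate it: part~(a) only says the correction terms $G(b')$ have large $e_{\beta_1}$, not that $b'\in\B_w(\infty)$. This particular point is probably repairable by weakening the inductive claim to allow arbitrary $b'\in\B(\infty)$ with $\Lambda_{\tilde w}(b')>_{\lex}a$ (for the full word every element lies in the Demazure crystal, so the final statement is recovered), but as written it is an unproved step.

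The genuine gap is the nonvanishing $z_{a,s}\neq 0$, exactly where you predict trouble. Your filtration argument shows that $F_{\beta_1}^{(a_1)}$ is injective on the associated graded of the $e_{\beta_1}$-filtration, hence there is no cancellation on the image of the \emph{lowest} $e_{\beta_1}$-level of $F^{(\hat a)}$. But the target term $G(\tilde F^a1)=G(\tilde F_{\beta_1}^{a_1}c)$ with $c=\tilde F^{\hat a}1$ need not arise from that lowest level: a term $G(c')$ of $F^{(\hat a)}$ with $\Lambda_{\tilde w'}(c')>_{\lex}\hat a$ may have $e_{\beta_1}(c')<e_{\beta_1}(c)$, and then the correction terms of $F_{\beta_1}^{(a_1)}G(c')$ -- over which part~(a) gives no control beyond their $e_{\beta_1}$-value -- can land exactly on $G(\tilde F_{\beta_1}^{a_1}c)$ and a priori cancel the quantum binomial; ``iterating over the remaining coordinates'' does not address this, since the later letters of the word give no handle on those correction terms. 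Nor does the fallback to \cite{Lit}*{Proposition 10.3} close it: that proposition concerns adapted exponents, i.e.\ case~(b) (which is precisely how the paper uses it), and says nothing about $a\notin\im\Lambda_{\tilde w_0}$. To pin down the coefficient of $G(b^s)$ you need something like the paper's device: first prove $s>_{\lex}a$ directly from injectivity of the $\tilde F_\alpha$, then establish the intermediate identity $F^{(a)}=z'_{a,s}G(b^s)+\sum_{a'>_{\lex}a}z'_{a,a'}F^{(a')}$ with $z'_{a,s}$ an explicit product of quantum binomials, by peeling off the \emph{first nonzero} entry of $a$ (so that, because the earlier entries of $a$ vanish, all correction terms from part~(a) are automatically lex-larger than $a$), and only afterwards convert the remaining monomials using (b) and a maximality argument. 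Your proposal as it stands does not contain this step.
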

\begin{proof}
Recall from \cite{Jan}*{Lemma 11.3} that $\tilde F_\alpha^r (U_\A^-)_{-\nu} = 
\sum_{s \geq 0} F_\alpha^{(r + s)}(U_\A^-)_{-\nu + s\alpha}$, and that for 
each $u  \in (U_\A^-)_{-\nu}$ we have
\begin{align*}
\tilde F_\alpha^r u 
  = F_\alpha^{(r)} u + \tilde F_\alpha^{r+1} u'
  = F_\alpha^{(r)} u + \sum_{s \geq 1} F_\alpha^{(r+s)} u_s
\end{align*}
where $u' \in (U_\A^-)_{\nu + \alpha}$ and $u_s \in (U_\A^-)_{\nu + s\alpha}$.
Set $e = e_\alpha(b)$. As shown in the proof of \cite{Jan}*{Lemma 11.12}, in 
p. 249 below equation (1), 
\begin{align}
\label{1}
\tag{$\dagger$}
G(b) 
  &= \tilde F_\alpha^e(G(\tilde E_\alpha^e b)) 
    + \tilde F_\alpha^{e+1} u'
\end{align}
with $u' \in (U_\A^-)_{\nu + \alpha}$, so
\begin{align*}
G(b) 
  &= F_\alpha^{(e)}(G(\tilde E_\alpha^e b)) 
    + \sum_{s \geq 1} F_\alpha^{(e+s)} u_s.
\end{align*}
Multiplying this by $F_\alpha^{(r)}$ we get
\begin{align*}
F_\alpha^{(r)} G(b) 
  & = \qbinom{r+e}{r} F_\alpha^{(r + e)}(G(\tilde E_\alpha^e b)) 
    + \sum_{s \geq 1} F_\alpha^{(r+e+s)} u'_s.
\end{align*}
Using \ref{1} we get
\begin{align*}
\tilde F_\alpha^{r + e}(G(\tilde E_\alpha^e b)) 
  = \tilde F_\alpha^{r + e}(G(\tilde E_\alpha^{e+r} \tilde F_\alpha^r b))
  = G(\tilde F_\alpha^r b) + \tilde F_\alpha^{r+e+1} u''
\end{align*}
and finally
\begin{align*}
F_\alpha^{(r)} G(b) 
  & \equiv \qbinom{r+e}{r} G(\tilde F_\alpha^r b)
  \mod \tilde F_\alpha^{r+e+1} (U_\A^-)_{-\nu}.
\end{align*}
By \cite{Jan}*{Lemma 11.12} the $G(b')$ with $e_\alpha(b') > r+e$
form an $\A$-basis of $\tilde F_\alpha^{r+e+1} (U_\A^-)_{-\nu}$, so we are done
with item \ref{pre}.

As mentioned above, item \ref{in-image} is a consequence of 
\cite{Lit}*{Proposition 10.3}, which states that given a dominant integral 
weight $\lambda$ and a highest weight vector $v_\lambda \in V_\A(\lambda)$, 
then if $G(b^a) v_{\lambda} \neq 0$ there exist $x_{a,a'}$ as in the statement 
such that
\begin{align*}
F^{(a)}v_\lambda
  &= G(b^a)v_\lambda + \sum_{a' >_{\lex} a} x_{a,a'} G(b^{a'})v_\lambda.
\end{align*}
As shown in the proof of \cite{Jan}*{Theorem 10.10}, there exists a dominant
integral weight $\lambda$ such that the map $U^-_{-\nu} \to 
V_v(\lambda)_{\lambda - \nu}$ given by $G(b) \mapsto G(b)v_\lambda$ is an 
isomorphism, so the first formula is proved. This means that the (finite) 
matrix of the coefficients of the $F^{(a)} \in (U_\A^-)_{-\nu}$ in the 
global basis of $(U_\A^-)_{-\nu}$ with the order induced by the lexicographic 
order, is lower triangular with ones in the diagonal. The second formula 
follows by taking the inverse of this matrix.

Let us prove the last item. Suppose that $s <_\lex a$. Then by definition 
$s_1 = a_1, \ldots, s_j = a_j, s_{j+1} < a_{j+1}$ for some $1 \leq j \leq N$. 
This implies that 
\begin{align*}
	\tilde E_{\alpha_{j+1}}^{s_{j+1}+1} 
		\tilde E_{\alpha_j}^{s_j} \cdots \tilde E_{\alpha_1}^{s_1}(b) 
		= \tilde F_{\alpha_{j+1}}^{a_j-s_j-1} \cdots \tilde 
			F_{\alpha_N}^{a_N}1 + v\L(\infty) \neq 0,
\end{align*}
since the maps $\tilde F_\alpha$ are injective.
This contradicts the definition of $\Lambda_{\tilde w_0}(b)$, so $s 
\geq_{\lex} a$ and since $a$ is not in the image of $\Lambda_{\tilde w_0}$ 
the inequality is strict.

We now prove the following intermediate result: for each $a \in \NN^N$ we have
$F^{(a)} = z'_{a,s} G(b^s) + \sum_{a' >_{\lex} a} z'_{a,a'} F^{(a')}$, where
$s = \Lambda_{\tilde \omega_0}^{-1}(\tilde F^a(1 + v\L(\infty)))$ and 
$z'_{a,a'}, z'_{a,s}$ lie in $\A$ for all $a'$. We prove this by descending 
induction on $j = \min \{i \mid a_i \neq 0\}$, starting with the case $j = N$.
In that case item \ref{pre} implies that $\tilde F^a = G(b^a)$ and we are done.

Now let $\overline a \in \NN^N$ be the $N$-tuple given by $\overline a_j = 0$ 
and $\overline a_i = a_i$ for all $i \neq j$. By the inductive hypothesis we 
have
\begin{align*}
F^{(a)} 
  &= F_{\alpha_j}^{(a_j)}F^{(\overline a)} 
  = z'_{\overline a, \overline s} F^{a_j}_{\alpha_j} G(b^{\overline s})
    + \sum_{\overline a' > \overline a} 
      z'_{\overline a, \overline a'} F^{(a_j)}_{\alpha_j} F^{(\overline a')}.
\end{align*}
Now $F^{(a_j)}_{\alpha_j} F^{(\overline a')}$ is a scalar multiple of 
$F^{(a')}$ where $a'_j = a_j$ and $a'_i = \overline a_i$, and clearly $a' 
>_{\lex} a$. On the other hand by item \ref{pre}
\begin{align*}
F^{a_j}_{\alpha_j} G(b^{\overline s})
  &= \qbinom{a_j + e}{a_j} G(\tilde F^{a_j}_{\alpha_j} b^{\overline s})
    + \sum_{e_{\alpha_j}(b') > a_j + e_{\alpha_j}(b)} x_{b'} G(b')
\end{align*}
where $e = e_{\alpha_j}(b^{\overline s})$. Since $b^{\overline s} = 
\tilde F^{\overline a}$ we have $\tilde F_{\alpha_j}^{a_j} b^s = \tilde 
F^a(1 + v\L(\infty)) = b^s$. On the other hand, the condition $e_{\alpha_j}(b')
> a_j + e_{\alpha_j}(b)$ guarantees that $\Lambda_{\tilde \omega_0}(b') 
>_{\lex} a$ and by item \ref{in-image} each $G(b')$ is a linear combination of 
monomials $F^{(a')}$ with $a' >_{\lex} a$. This completes the proof of the 
intermediate result.

Now consider the set $D$ of all $a \notin \im \Lambda_{\tilde \omega_0}$ such 
that $F^{(a)} \in (U^-_\A)_{-\nu}$ for which the statement of 
\ref{not-in-image} fails. This set is finite and totally ordered by the 
lexicographic order, so if it is not empty then it has a maximal element $d$.
Now by the intermediate statement $F^{(d)} = z'_{d,s} G(b^s) + \sum_{d' 
>_{\lex} d} z'_{d,d'} F^{(d')}$, and since no $d'$ can be in $D$, this sum is
an $\A$-linear combination of elements $G(b^{s'})$ with $s' >_{\lex} d' 
>_{\lex} d$. This contradicts the fact that $d \in D$, and the contradiction 
arose from supposing $D$ was nonempty. Thus \ref{not-in-image} holds in all 
cases.
\end{proof}

\paragraph
Recall that we have defined $c_{b,b'}^{b''} \in \A$ as the coefficient of 
$b''_{\lambda + \lambda'}$ in $b_\lambda b_{\lambda'}$, and that we have shown 
that it equals a power of $v$ times $d_{b,b'}^{b''}$, the coefficient of 
$G(b) \ot G(b') K_{-\mu}$ in $\Delta(G(b''))$, where $-\mu$ is the weight
of $G(b)$. We have also shown above that the statement of the following 
proposition is equivalent to $\flagbasis$ being a 
$(\tilde S_{\tilde \omega_0}, <_{\lex})$-basis.  
\begin{Proposition*}
Fix $\tilde w_0 = s_{\alpha_1} \cdots s_{\alpha_N}$ a decomposition of the 
longest word of $W$. Let $b,b',b'' \in \B(\infty)$. If $d_{b,b'}^{b''} \neq 
0$ then $\Lambda_{\tilde w_0}(b'') \leq_{\lex} \Lambda_{\tilde w_0}(b) + 
\Lambda_{\tilde w_0}(b')$, and if equality holds then $d_{b,b'}^{b''}$
is a power of $v$.
\end{Proposition*}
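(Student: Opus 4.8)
The plan is to compute the coproduct $\Delta(G(b''))$ by passing to Littelmann's monomial basis, applying $\Delta$ there, and then translating back into the global basis, invoking Lemma \ref{coproduct-coefficients-convex} at each step. Set $a = \Lambda_{\tilde w_0}(b)$, $a' = \Lambda_{\tilde w_0}(b')$, $a'' = \Lambda_{\tilde w_0}(b'')$, and recall $b^c = \Lambda_{\tilde w_0}^{-1}(c)$ for $c \in \im \Lambda_{\tilde w_0}$. First I would use item \ref{in-image} of Lemma \ref{coproduct-coefficients-convex} to write $G(b'') = F^{(a'')} + \sum_{c >_{\lex} a''} y_{a'',c} F^{(c)}$ with $y_{a'',c} \in \A$, the sum running over $c$ in the image of $\Lambda_{\tilde w_0}$ lying in the weight space of $b''$. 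Since $\Delta$ is an algebra map, $F^{(c)} = F_{\alpha_1}^{(c_1)} \cdots F_{\alpha_N}^{(c_N)}$, and $\Delta(F_\alpha^{(r)}) = \sum_{i+j=r} v_\alpha^{ij} F_\alpha^{(i)} \ot F_\alpha^{(j)} K_\alpha^{-i}$, expanding the product $\prod_k \Delta(F_{\alpha_k}^{(c_k)})$ and commuting the $K$'s to the right gives
\begin{align*}
  \Delta(F^{(c)}) = \sum_{i + j = c} v^{\beta(c,i)} \, F^{(i)} \ot F^{(j)} K_{\wt(F^{(i)})},
\end{align*}
where the sum is over $i,j \in \NN^N$ with $i+j = c$ coordinate by coordinate, and $v^{\beta(c,i)}$ is a single power of $v$, because for each $i$ there is exactly one way to obtain the corresponding term in the product.

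Next I would feed items \ref{in-image} and \ref{not-in-image} of Lemma \ref{coproduct-coefficients-convex} back in: in all cases $F^{(i)} = \sum_{p \geq_{\lex} i} u_{i,p} G(b^p)$ with $u_{i,p} \in \A$, the sum running over $p \in \im \Lambda_{\tilde w_0}$ in the weight space of $F^{(i)}$, where $u_{i,i} = 1$ if $i \in \im \Lambda_{\tilde w_0}$ and the sum is supported on $p >_{\lex} i$ if $i \notin \im \Lambda_{\tilde w_0}$. Substituting these expansions for both tensor factors, and noting that the global basis is a weight basis so that $\{G(b^p) \ot G(b^q) K_{\wt(b^p)}\}$ is $\A$-linearly independent, the coefficient $d_{b,b'}^{b''}$ of $G(b) \ot G(b') K_{\wt(b)}$ becomes a finite sum
\begin{align*}
  d_{b,b'}^{b''} = \sum y_{a'',c} \, v^{\beta(c,i)} \, u_{i,a} \, u_{j,a'}
\end{align*}
indexed by triples $(c,i,j)$ with $c \geq_{\lex} a''$, $\; i + j = c$, $\; i \leq_{\lex} a$ and $j \leq_{\lex} a'$, where we set $y_{a'',a''} = 1$.

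The remaining input is the elementary monotonicity of lex addition: for $i,j,a,a' \in \NN^N$, if $i \leq_{\lex} a$ and $j \leq_{\lex} a'$ then $i+j \leq_{\lex} a+a'$, with equality if and only if $i = a$ and $j = a'$; this follows by comparing coordinates up to the first place where $i$ differs from $a$, resp.\ $j$ from $a'$. Granting this, every nonzero term of the displayed sum satisfies $a'' \leq_{\lex} c = i+j \leq_{\lex} a+a'$, which is the first assertion. If moreover $a'' = a+a'$, then $a'' \leq_{\lex} c \leq_{\lex} a+a' = a''$ forces $c = a''$, $i = a$, $j = a'$, so the sum collapses to $y_{a'',a''} v^{\beta(a'',a)} u_{a,a} u_{a',a'} = v^{\beta(a'',a)}$ (using $u_{a,a} = u_{a',a'} = 1$ since $a,a' \in \im \Lambda_{\tilde w_0}$), a power of $v$, as claimed. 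The main obstacle is the careful bookkeeping of the two basis changes: one must check that translating the monomials $F^{(i)}$ into the global basis can only raise the lex-index (this is precisely what items \ref{in-image}--\ref{not-in-image} supply) and that reordering the $K$-factors in $\Delta(F^{(c)})$ yields honest powers of $v$ rather than sums; the combinatorial lemma and the weight bookkeeping are routine.
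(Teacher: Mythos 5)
Your proposal is correct and follows essentially the same route as the paper: expand $G(b'')$ in Littelmann's monomial basis via item \ref{in-image} of Lemma \ref{coproduct-coefficients-convex}, apply the explicit coproduct formula for the divided powers $F^{(c)}$, convert both tensor factors back to the global basis using items \ref{in-image} and \ref{not-in-image}, and conclude by the compatibility of the lexicographic order with addition. Your write-up only makes the paper's bookkeeping (the triple-indexed sum and the lex-monotonicity step) more explicit; no new idea or gap is involved.
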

\begin{proof}
We have already observed that $\Delta(F_\alpha^{(r)}) = \sum_{i+j = r} 
v_\alpha^{ij} F_\alpha^{(i)} \ot F_\alpha^{(j)} K^{-i}_\alpha$, and it follows 
that for each $a \in \NN^N$ we get $\Delta(F^{(a)}) = 
\sum_{t+u = a} v^{z(t,u)} F^{(t)} \ot F^{(u)}K_{\wt(F^{(t)})}$, with $z(t,u) 
\in \ZZ$. By item \ref{in-image} of the previous lemma 
\begin{align*}
\Delta(G(b^a))
  &= \Delta(F^{(a)}) + \sum_{a' >_{\lex} a} y_{a,a'} \Delta(F^{(a')}) \\
  &= \sum_{t+u = a} v^{z(t,u)} F^{(t)} \ot F^{(u)}K_{\wt(F^{(t)})} 
  + \sum_{t+u >_\lex a} y_{a,t+u} v^{z(t,u)} F^{(t)}
    \ot F^{(u)}K_{\wt(F^{(t)})},
\end{align*}
and using item \ref{not-in-image} of the lemma we get that the element in the 
last display equals
\begin{align*}
\sum_{t+u = a} v^{z(t,u)} G(b^t) \ot G(b^u) K_{\wt(b^t)}
    + \sum_{t+u >_{\lex} a} y'_{t,u} G(b^t) \ot G(b^u) K_{\wt(b^t)},
\end{align*}
where $t,u$ run over the image of $\Lambda_{\tilde w_0}$, and $z(t,u) \in \ZZ$
and $y'_{t,u} \in \A$ for each such pair $(t,u)$. The result follows by taking 
$b^a = b''$.
\end{proof}

\begin{Remark*}
It is possible to take an alternative approach, and define bases for quantum 
flag and Schubert varieties as in paragraph \ref{q-var-bases} starting with 
the monomial basis instead of the global basis, and in this case we also 
obtain an $(S,<_{\lex})$-basis. This is the approach taken by Fang,
Fourier and Littelmann in \cite{FFL2}, where they obtain many different 
monomial bases for (classical) enveloping algebras and hence many different 
degenerations for a larger class of varieties (in the commutative case). The 
price to pay is that one loses control over the semigroup parameterizing the 
basis i.e. the exponents of the monomial basis. In general it is not 
known whether this semigroup is affine (though in some cases it is known that 
it is not normal, see the aforementioned article). In our case this is 
guaranteed by the fact that this semigroup is the same as that arising from
the string parametrization, which is known to be affine. Thus even in the
alternative approach the relation between the monomial basis and the canonical
basis is essential. 
\end{Remark*}

\paragraph
\label{Schubert-deg}
Now let $\k$ be an arbitrary field with $\operatorname{char} \k > 2$, or 
$\operatorname{char} \k > 3$ if the root system of $\g$ has an irreducible 
component
of type $G_2$, and let $q \in \k^\times$ be a nonroot of unity. There is a 
morphism $\A \to \k$ induced by the assignation $v \mapsto q$, which makes 
$\k$ into an $\A$-bimodule. There is an algebra map $\kappa: U^-_q(\g) \to \k 
\ot_\A U_\A^-$, given by sending $F_{\alpha_i} \in U_q^-(\g)$ to $1 \ot_{\A} 
F_{\alpha_i}$. This map is obviously surjective, and it respects the weight 
decomposition of both algebras. By \cite{Jan}*{8.24 Remark (3)}, the dimension
of the weight components of both algebras are given by the Kostant partition
function, and hence they are equal. Thus $\kappa$ is an isomorphism. 

The map $\k \ot_\A V_\A(\lambda) \to V_q(\lambda)$ is $U^-_q(\g)$-linear 
and sends highest weight vectors to highest weight vectors, so it is an 
isomorphism and the global basis of $V_\A(\lambda)$ maps to a basis of 
$V_q(\lambda)$, which we also call the global basis of $V_q(\lambda)$. Also, 
if we set $C_\A^+(\lambda)$ as the $\A$-span of $b_\lambda$ for $b \in 
\B(\lambda)$ we get an isomorphism $\k \ot_\A C_\A^+(\lambda) \mapsto 
C_q^+(\lambda)$, with the image of the $b_\lambda$ forming the dual basis of 
the global basis of $V_q(\lambda)$. Thus if we set $\O_\A[G/P_I]_w$ to be the 
$\A$-span of $SB_I(w)$ inside $\O_v[G/P_I]_w$, we get that the natural map 
$\k \ot_\A \O_\A[G/P_I]_w \to \O_q[G/P_I]_w$ is an isomorphism.

\begin{Theorem*}
Let $\k$ be any field and let $q \in \k^\times$ be a nonroot of unity. Let $I 
\subset P$ be a set of fundamental weights, let $w \in W$, and let $\tilde w_0$
be a reduced decomposition of $w_0$ adapted to $w$.

The quantum Schubert variety $\O_q[G/P_I]_w$ degenerates to a quantum affine 
toric variety with associated semigroup $\tilde S^I_{\tilde w_0, \tilde w}$. 
In particular any quantum Schubert variety has property $\chi$, finite local 
dimension, the AS-Cohen-Macaulay property, and is a maximal order in its 
skew-field of fractions.
\end{Theorem*}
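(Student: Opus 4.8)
The plan is to reduce the theorem to the general results of Sections \ref{graded-connected-algebras} and \ref{qatv}, the essential point being that $\O_q[G/P_I]_w$ carries an $(S,<^\iota)$-basis for $S = \tilde S^I_{\tilde w_0, \tilde w}$ and a suitable embedding $\iota$. I would work first over $\QQ(v)$. The Lemma in \ref{Littelman-parametrizations} already gives that $\tilde S_{\tilde w_0}$, $\tilde S^I_{\tilde w_0}$ and $\tilde S^I_{\tilde w_0, \tilde w}$ are normal affine semigroups, while the Proposition in \ref{coproduct-coefficients}, together with the reduction carried out just before it (which rewrites the estimate on the $d^{b''}_{b,b'}$ as the $(S,<_{\lex})$-basis condition, the weight coordinates of $\Gamma_{\tilde w_0}$ adding exactly because the product of matrix coefficients in $C^+_v(\lambda)$ and $C^+_v(\lambda')$ lands in $C^+_v(\lambda+\lambda')$), shows that $\flagbasis$, $\flagbasis_I$ and $\schubertbasis_I(w)$ are $(\tilde S_{\tilde w_0},<_{\lex})$-, $(\tilde S^I_{\tilde w_0},<_{\lex})$- and $(\tilde S^I_{\tilde w_0, \tilde w},<_{\lex})$-bases respectively, where $<_{\lex}$ denotes the pull-back through $\Gamma_{\tilde w_0}$ of the lexicographic order of $\NN^N \times \NN^n$. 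Thus $\O_v[G/P_I]_w$ has the desired structure over $\QQ(v)$.

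Next I would descend to the arbitrary field $\k$. The key observation is that all structure constants of $\schubertbasis_I(w)$ lie in $\A = \ZZ[v,v^{-1}]$, and that the leading ones are units of $\A$: by the computation preceding the Proposition in \ref{coproduct-coefficients}, the coefficient of $b''_{\lambda+\lambda'}$ in $b_\lambda b'_{\lambda'}$ is $v^{(\wt(b),\lambda')} d^{b''}_{b,b'}$ with $d^{b''}_{b,b'}\in\A$, and it is a power of $v$ whenever $\Gamma_{\tilde w_0}(b''_{\lambda+\lambda'}) = \Gamma_{\tilde w_0}(b_\lambda)+\Gamma_{\tilde w_0}(b'_{\lambda'})$. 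Since $\schubertbasis_I(w)$ is a $\QQ(v)$-basis of $\O_v[G/P_I]_w$ it is $\A$-linearly independent, so $\O_\A[G/P_I]_w$ is $\A$-free on it; applying $\k\ot_\A-$ and using the isomorphism $\k\ot_\A\O_\A[G/P_I]_w\cong\O_q[G/P_I]_w$ from \ref{Schubert-deg}, the image of $\schubertbasis_I(w)$ is a $\k$-basis of $\O_q[G/P_I]_w$ whose structure constants are the images of those over $\A$. The leading coefficients, being powers of $v$, map to powers of $q\in\k^\times$ and stay invertible, so this image is a $(\tilde S^I_{\tilde w_0, \tilde w},<_{\lex})$-basis of $\O_q[G/P_I]_w$; the same argument descends the corresponding statements for $\flagbasis_I$, and the Schubert case follows by passing to the quotient by $J^I_w$, exactly as noted in \ref{q-var-bases} and \ref{coproduct-coefficients}. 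Here one also records that $\O_q[G/P_I]_w$ is connected and $\NN^l$-graded, with $l = |P\setminus I|$ since the grading monoid $\mathcal J(I)$ is free, and noetherian — the latter because any algebra with such a basis has noetherian associated graded ring $\k^\alpha[S]$ and is therefore noetherian.

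Now Proposition \ref{S-ordered-basis-degeneration}, applied with the embedding $\iota = \Gamma_{\tilde w_0}$, gives the first assertion: $\O_q[G/P_I]_w$ has a quantum affine toric degeneration with underlying semigroup $\tilde S^I_{\tilde w_0, \tilde w}$. For the consequences, the associated graded ring is $\k^\alpha[S]$ with $S = \tilde S^I_{\tilde w_0, \tilde w}$ normal, so by Proposition \ref{properties-of-qatv} it is noetherian, integral, has property $\chi$, has finite local dimension, and is AS-Cohen-Macaulay; Theorem \ref{transfer} then transfers property $\chi$, finiteness of the local cohomological dimension and AS-Cohen-Macaulayness to $\O_q[G/P_I]_w$, and integrality passes to $\O_q[G/P_I]_w$ since a filtered algebra with integral associated graded ring is integral. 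Finally $\k^\alpha[S]$ is a maximal order in its division ring of fractions, again by Proposition \ref{properties-of-qatv}(c). I expect this last property to be the main obstacle, since it is \emph{not} covered by Theorem \ref{transfer}: here I would instead invoke the classical fact that a noetherian filtered domain whose associated graded ring is a maximal order is itself a maximal order, applied to the connected filtration underlying the degeneration. Apart from this extra lifting step, the only delicate points in the proof of the theorem proper are the bookkeeping of the base change $\A\to\k$ — keeping the leading coefficients invertible and checking that the combinatorial semigroup does not change — all the genuine work having been done in the Proposition of \ref{coproduct-coefficients}.
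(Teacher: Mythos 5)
Your proposal is correct and follows essentially the same route as the paper: the $(S,<_{\lex})$-basis property established via the coproduct estimate of \ref{coproduct-coefficients}--\ref{coproduct-coefficients-convex}, descent along $\A=\ZZ[v,v^{-1}]\to\k$ with leading coefficients powers of $v$ (hence units after specializing to $q$), Proposition \ref{S-ordered-basis-degeneration}, and then Proposition \ref{properties-of-qatv} together with Theorem \ref{transfer} for $\chi$, finite local dimension and AS-Cohen-Macaulayness. Even your treatment of the maximal-order property matches the paper, which likewise lifts it from the associated graded ring by the classical filtered-to-graded result for maximal orders (citing Maury--Raynaud) rather than by Theorem \ref{transfer}.
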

\begin{proof}
For each dominant integral weight $\lambda$ and each $a \in 
\Lambda_{\tilde w_0}(\B(\lambda))$, we denote by $b^a_\lambda$ the element $1 
\ot_\A b_\lambda \in \k \ot_\A \O_v[G/P_I]_w$ where $b = 
\GG_{\tilde w_0}^{-1}(a)$. With this notation, it follows from 
\ref{coproduct-coefficients-convex} that for each pair of dominant integral 
weights $\lambda, \lambda'$ and each $a \in \Lambda_{\tilde w_0}(\B(\lambda)), 
a' \in \Lambda_{\tilde w_0}(\B(\lambda')), a'' \in 
\Lambda_{\tilde w_0}(\B(\lambda + \lambda'))$ such that $a'' \geq_{\lex} a + 
a'$, there exists $x_{a''} \in \A$ such that 
\begin{align*}
b^a_{\lambda} b^{a'}_{\lambda'} = \sum_{a'' \geq a+a'} x_{a''} 
  b^{a''}_{\lambda + \lambda'},
\end{align*}
with $x_{a+a'}$ a power of $q$. This implies that the basis 
$\{1 \ot b \mid b \in \schubertbasis_I(w)\}$ is a $(\tilde S^I_{\tilde w_0, 
\tilde w}, \leq_{\lex}~)$ basis. Thus by Proposition 
\ref{S-ordered-basis-degeneration} we get the degeneration result.

Since $\tilde S^w_{w_0, I}$ is a normal semigroup, we know by Proposition 
\ref{properties-of-qatv} that the associated graded ring of the quantum 
Schubert variety has property $\chi$, finite local dimension and the 
AS-Cohen-Macaulay property, which $\O_q[G/P_I]_w$ inherits by Theorem 
\ref{transfer}. 
Also by Proposition \ref{properties-of-qatv}, a quantum affine toric variety 
whose underlying semigroup is normal is a maximal order in its ring of 
fractions, and it follows from \cite{Mau}*{Chapitre IV, Proposition 2.1 and 
Chapitre V, Corollaire 2.6} that $\O_q[G/P_I]_w$ is also a maximal order. 
\end{proof}

\begin{bibdiv} 
\begin{biblist} 
\bib{AZ}{article}{ 
  author={Artin, M.}, 
  author={Zhang, J. J.}, 
  title={Noncommutative projective schemes}, 
  journal={Adv. Math.}, 
  volume={109}, 
  date={1994}, 
  number={2},
  pages={228--287}, 
}

\bib{BZ}{article}{
   author={Berenstein, Arkady},
   author={Zelevinsky, Andrei},
   title={Tensor product multiplicities, canonical bases and totally
   positive varieties},
   journal={Invent. Math.},
   volume={143},
   date={2001},
   number={1},
   pages={77--128},
}

\bib{BB}{book}{
   author={Bj{\"o}rner, Anders},
   author={Brenti, Francesco},
   title={Combinatorics of Coxeter groups},
   series={Graduate Texts in Mathematics},
   volume={231},
   publisher={Springer},
   place={New York},
   date={2005},
   pages={xiv+363},
}
    
\bib{BS}{book}{ 
  author={Brodmann, M. P.}, 
  author={Sharp, R. Y.}, 
  title={Local cohomology: an algebraic introduction with geometric applications}, 
  series={Cambridge Studies in Advanced Mathematics}, 
  volume={60}, 
  publisher={Cambridge University Press},
  place={Cambridge}, 
  date={1998}, 
  pages={xvi+416}, 
}

\bib{BH}{book}{ 
  author={Bruns, Winfried}, 
  author={Herzog, J{\"u}rgen},
  title={Cohen-Macaulay rings}, 
  series={Cambridge Studies in Advanced Mathematics},
  volume={39}, 
  publisher={Cambridge University Press}, 
  place={Cambridge}, 
  date={1993},
  pages={xii+403}, 
}

\bib{BG}{book}{
   author={Bruns, Winfried},
   author={Gubeladze, Joseph},
   title={Polytopes, rings, and $K$-theory},
   series={Springer Monographs in Mathematics},
   publisher={Springer, Dordrecht},
   date={2009},
   pages={xiv+461},
}

\bib{C}{article}{ 
  author={Caldero, Philippe}, 
  title={Toric degenerations of Schubert varieties}, 
  journal={Transform. Groups}, 
  volume={7}, 
  date={2002}, 
  number={1},
  pages={51--60}, 
}

\bib{DCEP82}{book}{
   author={De Concini, Corrado},
   author={Eisenbud, David},
   author={Procesi, Claudio},
   title={Hodge algebras},
   series={Ast\'{e}risque},
   volume={91},
   note={With a French summary},
   publisher={Soci\'{e}t\'{e} Math\'{e}matique de France, Paris},
   date={1982},
   pages={87},
}

\bib{FFL}{article}{
   author={Fang, Xin},
   author={Fourier, Ghislain},
   author={Littelmann, Peter},
   title={On toric degenerations of flag varieties},
   conference={
      title={Representation theory---current trends and perspectives},
   },
   book={
      series={EMS Ser. Congr. Rep.},
      publisher={Eur. Math. Soc., Z\"urich},
   },
   date={2017},
   pages={187--232},
}

\bib{FFL2}{article}{
   author={Fang, Xin},
   author={Fourier, Ghislain},
   author={Littelmann, Peter},
   title={Essential bases and toric degenerations arising from birational
   sequences},
   journal={Adv. Math.},
   volume={312},
   date={2017},
   pages={107--149},
}

\bib{RGS}{book}{
   author={Rosales, J. C.},
   author={Garc{\'{\i}}a-S{\'a}nchez, P. A.},
   title={Finitely generated commutative monoids},
   publisher={Nova Science Publishers, Inc., Commack, NY},
   date={1999},
   pages={xiv+185},
}
    
\bib{GL}{article}{
   author={Gonciulea, N.},
   author={Lakshmibai, V.},
   title={Degenerations of flag and Schubert varieties to toric varieties},
   journal={Transform. Groups},
   volume={1},
   date={1996},
   number={3},
   pages={215--248},
}

\bib{Jan}{book}{ 
  author={Jantzen, Jens Carsten}, 
  title={Lectures on quantum groups},
  series={Graduate Studies in Mathematics}, 
  volume={6}, 
  publisher={American Mathematical Society}, 
  place={Providence, RI}, 
  date={1996}, 
  pages={viii+266}, 
}

\bib{JZ}{article}{ 
  author={J{\o}rgensen, Peter}, 
  author={Zhang, James J.},
  title={Gourmet's guide to Gorensteinness}, 
  journal={Adv. Math.}, 
  volume={151},
  date={2000}, 
  number={2}, 
  pages={313--345}, 
}

\bib{K1}{article}{ 
  author={Kashiwara, Masaki}, 
  title={The crystal base and Littelmann's refined Demazure character formula}, 
  journal={Duke Math. J.}, 
  volume={71}, 
  date={1993},
  number={3}, 
  pages={839--858}, 
}

\bib{K}{article}{ 
  author={Kashiwara, Masaki}, 
  title={On crystal bases}, 
  conference={
    title={Representations of groups}, 
    address={Banff, AB}, date={1994}, }, 
  book={
    series={CMS Conf. Proc.}, 
    volume={16}, publisher={Amer. Math. Soc.},
    place={Providence, RI}, 
    }, 
  date={1995}, 
  pages={155--197}, 
}

\bib{qLR}{article}{ 
  author={Lakshmibai, V.}, 
  author={Reshetikhin, N.}, 
  title={Quantum flag and Schubert schemes}, 
  conference={ 
    title={ physics}, 
    address={Amherst, MA},
    date={1990}, 
  }, 
  book={ 
    series={Contemp. Math.}, 
    volume={134}, 
    publisher={Amer. Math. Soc.}, 
    place={Providence, RI}, 
    }, 
  date={1992}, 
  pages={145--181}, 
}

\bib{LR1}{article}{ 
  author={Lenagan, T. H.}, 
  author={Rigal, L.}, 
  title={Quantum graded algebras with a straightening law and the AS-Cohen-Macaulay   
    property for quantum determinantal rings and quantum Grassmannians}, 
  journal={J. Algebra}, 
  volume={301},
  date={2006}, 
  number={2}, 
  pages={670--702}, 
}

\bib{Lit}{article}{ 
  author={Littelmann, P.}, 
  title={Cones, crystals, and patterns},
  journal={Transform. Groups}, 
  volume={3}, 
  date={1998}, 
  number={2}, 
  pages={145--179}, 
}

\bib{Lu}{book}{ 
  author={Lusztig, George}, 
  title={Introduction to quantum groups},
  series={Modern Birkh\"auser Classics}, 
  note={Reprint of the 1994 edition},
  publisher={Birkh\"auser/Springer, New York}, 
  date={2010}, 
  pages={xiv+346}, 
}

\bib{Mau}{book}{
   author={Maury, Guy},
   author={Raynaud, Jacques},
   title={Ordres maximaux au sens de K. Asano},
   language={French},
   series={Lecture Notes in Mathematics},
   volume={808},
   publisher={Springer},
   place={Berlin},
   date={1980},
   pages={viii+192},
}

\bib{MR}{book}{ 
  author={McConnell, J. C.}, 
  author={Robson, J. C.}, 
  title={Noncommutative Noetherian rings}, 
  series={Graduate Studies in Mathematics}, 
  volume={30},
  edition={Revised edition}, 
  note={With the cooperation of L. W. Small},
  publisher={American Mathematical Society}, 
  place={Providence, RI}, 
  date={2001},
  pages={xx+636}, 
}

\bib{MS}{book}{ 
  author={Miller, Ezra}, 
  author={Sturmfels, Bernd}, 
  title={Combinatorial commutative algebra}, 
  series={Graduate Texts in Mathematics}, 
  volume={227},
  publisher={Springer-Verlag}, 
  place={New York}, 
  date={2005}, 
  pages={xiv+417}, 
}

\bib{VO}{book}{ 
  author={N{\u{a}}st{\u{a}}sescu, Constantin}, 
  author={Van Oystaeyen, F.},
  title={Graded and filtered rings and modules}, 
  series={Lecture Notes in Mathematics},
  volume={758}, 
  publisher={Springer}, 
  place={Berlin}, 
  date={1979}, 
  pages={x+148}, 
}

\bib{NV}{book}{ 
  author={N{\u{a}}st{\u{a}}sescu, Constantin}, 
  author={Van Oystaeyen, Freddy}, 
  title={Methods of graded rings}, 
  series={Lecture Notes in Mathematics},
  volume={1836}, 
  publisher={Springer-Verlag}, 
  place={Berlin}, 
  date={2004}, 
  pages={xiv+304},
}

\bib{RZ}{article}{
  author={Rigal, L.}, 
  author={Zadunaisky, P.}, 
  title={Quantum analogues of Richardson varieties in the grassmannian and their toric
  degeneration},
  journal={J. Algebra}, 
  volume={372}, 
  date={2012}, 
  pages={293--317}, 
}

\bib{RZ2}{article}{
  author={Rigal, L.}, 
  author={Zadunaisky, P.}, 
  title={Twisted Semigroup Algebras},
  journal={Alg. Rep. Theory},
  year={2015},
  doi={10.1007/s10468-015-9525-z},
}

\bib{S}{article}{ 
  author={So{\u\i}bel{\cprime}man, Ya. S.}, 
  title={On the quantum flag manifold}, 
  language={Russian}, 
  journal={Funktsional. Anal. i Prilozhen.},
  volume={26}, 
  date={1992}, 
  number={3}, 
  pages={90--92}, 
  translation={
    journal={Funct. Anal. Appl.}, 
    volume={26}, 
    date={1992}, 
    number={3},
    pages={225--227}, 
  }, 
}

\bib{St}{book}{
   author={Sturmfels, Bernd},
   title={Gr\"obner bases and convex polytopes},
   series={University Lecture Series},
   volume={8},
   publisher={American Mathematical Society, Providence, RI},
   date={1996},
   pages={xii+162},
}

\bib{W}{book}{ 
  author={Weibel, Charles A.}, 
  title={An introduction to homological algebra}, 
  series={Cambridge Studies in Advanced Mathematics}, 
  volume={38},
  publisher={Cambridge University Press}, 
  place={Cambridge}, 
  date={1994}, 
  pages={xiv+450}, 
}
\end{biblist} 
\end{bibdiv}

\vskip .5cm

\noindent
Laurent RIGAL, \\
Universit\'e Paris 13, Sorbonne Paris Cit\'e, LAGA, UMR CNRS 7539, 99 avenue 
J.-B. Cl\'ement,
93430 Villetaneuse,  France; e-mail: rigal@math.univ-paris13.fr\\

\noindent
Pablo ZADUNAISKY, \\
Universidad CAECE, Departamento de Matem\'aticas.
Av. de Mayo 866 - Buenos Aires, Argentina. e-mail: pzadub@dm.uba.ar\\

\end{document}